\documentclass[11pt]{amsart}
\usepackage[latin9]{inputenc}
\usepackage{geometry}
\geometry{verbose,tmargin=1.5cm,bmargin=1.5cm,lmargin=3cm,rmargin=3cm}
\usepackage{float}
\usepackage{dvipost}
\usepackage{units}
\usepackage{mathtools}
\usepackage{amsthm}
\usepackage{amstext}
\usepackage{amssymb}
\usepackage{graphicx}
\usepackage{setspace}
\PassOptionsToPackage{normalem}{ulem}
\usepackage{ulem}

\makeatletter

\dvipostlayout
\dvipost{osstart color push Red}
\dvipost{osend color pop}
\dvipost{cbstart color push Blue}
\dvipost{cbend color pop}
\DeclareRobustCommand{\lyxadded}[3]{\changestart#3\changeend}
\DeclareRobustCommand{\lyxdeleted}[3]{%
\changestart\overstrikeon#3\overstrikeoff\changeend}

\numberwithin{equation}{section}
\numberwithin{figure}{section}
\usepackage{enumitem}		
\theoremstyle{plain}
\newtheorem{thm}{\protect\theoremname}[section]
  \theoremstyle{plain}
  \newtheorem{lem}[thm]{\protect\lemmaname}
  \theoremstyle{definition}
  \newtheorem{defn}[thm]{\protect\definitionname}
  \theoremstyle{remark}
  \newtheorem*{rem*}{\protect\remarkname}
  \theoremstyle{plain}
  \newtheorem{cor}[thm]{\protect\corollaryname}
  \theoremstyle{plain}
  \newtheorem{prop}[thm]{\protect\propositionname}


\usepackage{amsthm}\usepackage{bm}
\usepackage{verbatim}
\usepackage{mathrsfs}





\makeatother

  \providecommand{\corollaryname}{Corollary}
  \providecommand{\definitionname}{Definition}
  \providecommand{\lemmaname}{Lemma}
  \providecommand{\propositionname}{Proposition}
  \providecommand{\remarkname}{Remark}
\providecommand{\theoremname}{Theorem}

\begin{document}
\global\long\def\B{\mathcal{B}}

\global\long\def\R{\mathbb{R}}

\global\long\def\Q{\mathbb{Q}}

\global\long\def\Z{\mathbb{Z}}

\global\long\def\C{\mathbb{C}}

\global\long\def\N{\mathbb{N}}

\global\long\def\at{\mathbf{\big|}}

\global\long\def\id{\mathbf{1}}

\global\long\def\ui{\mathbf{\textrm{i}}}

\global\long\def\ue{\mathbf{\textrm{e}}}

\global\long\def\ud{\mathbf{\textrm{d}}}

\global\long\def\nup{\overline{N}}

\global\long\def\ndown{\underline{N}}

\global\long\def\N{\mathbb{N}}

\global\long\def\Z{\mathbb{Z}}

\global\long\def\R{\mathbb{R}}

\global\long\def\Q{\mathcal{Q}}

\global\long\def\O{\mathcal{O}}

\global\long\def\E{\mathcal{E}}

\global\long\def\P{\mathcal{P}}

\global\long\def\ef{\varphi}

\global\long\def\trngle{\mbox{\ensuremath{\mathcal{D}}}}

\global\long\def\hftrngle{\frac{1}{2}\mbox{\ensuremath{\mathcal{D}}}}

\global\long\def\dom{\Omega}

\global\long\def\sdom{\mathcal{S}}

\global\long\def\recdom#1{\mathcal{R}^{\left(#1\right)}}

\global\long\def\mvec{\vec{m}}

\global\long\def\xvec{\vec{x}}

\global\long\def\nbox{\mbox{\ensuremath{\mathcal{B}}}^{\left(n\right)}}

\global\long\def\hfbox{\frac{1}{2}\mbox{\ensuremath{\mathcal{B}}}^{\left(n\right)}}

\global\long\def\ohm{\gamma_{n}}

\global\long\def\ohmdmn{\gamma}

\global\long\def\sp#1{\textrm{Span}_{\mathbb{Q}}\left\{  #1\right\}  }

\global\long\def\lams{\Lambda^{\left(n\right)}}

\global\long\def\bas{\mathcal{G}{}^{\left(n\right)}}

\global\long\def\qbox#1{\mathcal{B_{\mathcal{Q}}}\left(#1\right)}

\global\long\def\qtrngle#1{\mathcal{T_{\mathcal{Q}}}\left(#1\right)}

\global\long\def\spec#1{\textrm{\ensuremath{\sigma}}\left(#1\right)}

\global\long\def\ospec#1{\textrm{\ensuremath{\sigma}}_{\textrm{odd}}\left(#1\right)}

\global\long\def\espec#1{\textrm{\ensuremath{\sigma}}_{\textrm{even}}\left(#1\right)}

\global\long\def\oval{\lambda^{\left(0\right)}}

\global\long\def\pr#1{\textrm{\ensuremath{\mathfrak{p}}}\left(#1\right)}

\global\long\def\fold{\mathbf{F}}

\global\long\def\unfold{\mathbf{U}}

\global\long\def\bdry{\underrightarrow{\partial}\Q}

\global\long\def\line{\mathrm{L}}

\global\long\def\mult{\mathrm{d}}

\global\long\def\vol#1{\textrm{vol}\left(#1\right)}

\global\long\def\mcl#1{\mathcal{#1}}

\global\long\def\nodaldef#1{\mathcal{\delta}\left(#1\right)}

\title[Courant-sharpness of 2-Rep-Tiles]{Courant-sharp Eigenvalues of Neumann 2-Rep-tiles}

\author{Ram Band$^{1}$, Michael Bersudsky$^{1}$ and David Fajman$^{2}$}

\address{$^{1}${\small{}Department of Mathematics, Technion--Israel Institute
of Technology, Haifa 32000, Israel}}

\address{$^{2}${\small{}Faculty of Physics, University of Vienna, Boltzmanngasse
5, 1090 Vienna, Austria}}

\subjclass[2000]{35B05, 58C40, 58J50}

\keywords{Nodal domains, Courant nodal theorem, nodal set}
\begin{abstract}
We find the Courant-sharp Neumann eigenvalues of the Laplacian on
some 2-rep-tile domains. In $\R^{2}$ the domains we consider are
the isosceles right triangle and the rectangle with edge ratio $\sqrt{2}$
(also known as the A4 paper). In $\R^{n}$ the domains are boxes which
generalize the mentioned planar rectangle. The symmetries of those
domains reveal a special structure of their eigenfunctions, which
we call folding\textbackslash{}unfolding. This structure affects the
nodal set of the eigenfunctions, which in turn allows to derive necessary
conditions for Courant-sharpness. In addition, the eigenvalues of
these domains are arranged as a lattice which allows for a comparison
between the nodal count and the spectral position. The Courant-sharpness
of most eigenvalues is ruled out using those methods. In addition,
this analysis allows to estimate the nodal deficiency - the difference
between the spectral position and the nodal count.
\end{abstract}

\maketitle

\section{Introduction\label{sec:Introduction}}

\noindent It is nearly a century since Courant proved his famous nodal
result, stating that the $n^{\textrm{th }}$ Laplacian eigenfunction
cannot have more than $n$ nodal domains \cite{Cou_ngwgmp23}. Eigenfunctions
which achieve this upper bound are called \emph{Courant-sharp} and
it was Pleijel who showed that there are only finitely many of them
\cite{Pleijel_cpam56}. Much more recently Polterovich proved a similar
result for domains with Neumann boundary conditions \cite{Polterovich_ams09}.
In his paper, Pleijel also pointed out the Courant-sharp eigenfunctions
of the square with Dirichlet boundary conditions. What started in
this early work of Pleijel, is recently revived as a systematic search
of Courant-sharp eigenfunctions of various domains. Part of this analysis
owes to the broad interest in the general subject of nodal domains,
but this line of research in particular stems from the latest works
on nodal partitions. The search for Courant-sharpness was pioneered
by Helffer, Hoffmann-Ostenhof and Terracini, who found that minimization
of some energy functional over a set of domain partitions is connected
to nodal patterns of eigenfunctions \cite{HelHofTer_aihp09}. In particular,
they have shown that if the minimum of this functional over domain
partitions of $k$ subdomains is equal to the $k$'th eigenvalue,
then the $k$'th eigenvalue is Courant-sharp. In addition, the nodal
partition of the corresponding Courant-sharp eigenfunction is a minimizing
partition.\footnote{It is worthwhile to mention that a similar variational approach was
recently developed by Berkolaiko, Kuchment and Smilansky. Their results
also characterize the nodal sets of non-Courant-sharp eigenfunctions
\cite{BerKucSmi_gafa12}.}. This led to a particularized search for Courant-sharp eigenfunctions
of various domains over just the last couple of years. Among the domains
that were treated are the square, the disc, the annulus, irrational
rectangles, the torus and some triangles, where the analysis in those
cases is specialized to the considered boundary conditions (either
Dirichlet or Neumann). Most of those investigations are done by Helffer
collaborating with Hoffmann-Ostenhof and Terracini \cite{HelHof_chapter13,HelHof_jst14,HelHofTer_ims10},
with B{\'e}rard \cite{BerHel_ang15,BerHel_lmp16} and with Sundqvist
\cite{HelSun_ams16,HelSun_mmj15}. Additional results for various
tori are proved by L\'ena \cite{Lena_ams16,Lena_crm15}. For further
details and references we refer the reader to the recent reviews by
Bonnaillie-No\"el and Helffer \cite{BonHel_arXv15} and by Laugesen
and Siudeja \cite{LauSiu_book16}. While these reviews came out and
afterwards, three additional results, which for the first time concern
high-dimensional domains, were proven. Helffer and Kiwan determined
the Courant-sharp eigenfunctions of the cube \cite{HelKiw_arXv15},
L\'ena found them for the three-dimensional square torus \cite{Lena_ams16}
and Helffer with Sundqvist solved the problem for Euclidean balls
in any dimension \cite{HelSun_ams16}. Finally, in another direction,
Helffer and B{\'e}rard and also van den Berg and Gittins provided
bounds on the largest Courant-sharp Dirichlet eigenvalue and on the
total number of them for a general domain \cite{BerHel_arXiv15,VanGit_arXiv16}.

In the present work we determine the Courant-sharp eigenfunctions
of certain 2-rep-tile domains with Neumann boundary conditions. A
domain is said to be \emph{rep-tile} (replicating figure, a name
coined by Golomb \textbf{\cite{Golomb_tmg64}}) if it can be decomposed
into $k$ isometric domains, each of which is similar to the original
domain. According to the number of its subdomains ($k$), the domain
is called rep-$k$ or a $k$-rep-tile. The convex polygonal $2$-rep-tiles
in the plane are known to be the isosceles right triangle and parallelograms
with edge length ratio $\sqrt{2}$ \textbf{\cite{Martin_transformation_geometry_82,NgaSirVeeWan_ged00}}.
In this paper we find the Courant-sharp eigenfunctions of such Neumann
triangle (Theorem \ref{thm:CourantSharp_triangle}) and Neumann rectangle,
together with all the $2$-rep-tile high-dimensional boxes, which
generalize this rectangle (Theorem \ref{thm:CourantSharp_boxes}).
In addition to being $2$-rep-tiles, they all have the special property
that the cut which separates them into the mentioned two subdomains
serves also as a symmetry axis\footnote{Using involution symmetry for studying nodal counts may be found already
in early studies of Leydolod \cite{Leydold_Phd89,Leydold_top96}.} (or hyperplane for the boxes). Thus, for an eigenfunction which is
even with respect to the symmetry axis, its restriction to the subdomain,
when rescaled, yields again an eigenfunction of the same eigenvalue
problem. This allows us to identify a special structure, ordering
all of the eigenfunctions, which we call the \emph{folding} (\emph{unfolding})
structure. Using this classification we prove that all eigenfunctions
within a certain class vanish on the same subset.\footnote{This connects nicely to the recent works \cite{Agranovsky_arxv15,BouRud_inv11},
though those do not concern Courant-sharpness.}This property allows to rule out Courant-sharpness of eigenfunctions
without using the Faber-Krahn inequality \textbf{\cite{Faber_23,Krahn_ma25}}
or similar isoperimetric inequalities. Such isoperimetric inequalities
form the first step in ruling out Courant-sharpness in most of the
works mentioned above (with the exception of irrational rectangles,
the disk and Euclidean balls, where properties of minimal partitions
were used for that purpose). We present our result for the rectangle
in a general form (Theorem \ref{thm:CourantSharp_boxes}), valid for
all $n$-dimensional ($n\geq2)$ boxes which are 2-rep-tiles and symmetric
with respect to their hyperplane cut (see Figure \ref{fig:box and half box}).
It is interesting to note that the case of the rectangle goes beyond
the irrational rectangles which were explored so far, as its square
of edge ratio is rational (it equals two). Its spectrum is therefore
not simple as in the case of the irrational rectangles (treated in
\cite{HelHofTer_aihp09}). Yet, the folding structure mentioned above
allows to quickly rule out all of its multiple eigenvalues, and the
same goes for all high-dimensional boxes.

The outline of the paper is as follows. This section continues by
providing useful notations and exact statements of our main results.
We then present the so\lyxadded{mikein}{Wed Nov 02 13:00:36 2016}{-}\lyxdeleted{mikein}{Wed Nov 02 13:00:36 2016}{
}called folding structure for eigenfunctions of the isosceles right
triangle in Section \ref{sec:folding_structure_triangle}. In Section
\ref{sec:proof_of_triangle} we complete the investigation of the
triangle's eigenfunctions and prove Theorem \ref{thm:CourantSharp_triangle}.
In Section \ref{sec:proof_for_boxes} we present the folding structure
for the box eigenfunctions and prove Theorem \ref{thm:CourantSharp_boxes}.
In Appendix \ref{sec:appendix_multiplicity_boxes} we present some
identities concerning eigenvalue multiplicities for the boxes, connecting
those to the two-dimensional problem of the rectangle. Finally, in
Appendix \ref{sec:appendix_nodal_deficiency} we go beyond Courant-sharpness
by describing some results on the nodal deficiency and in Appendix
\ref{sec:appendix_dirichlet_problem} we point out how some of our
methods apply for the same domains, but with Dirichlet boundary conditions.

\subsection{Notations and Preliminaries}

We consider the Laplacian eigenvalue problem on a bounded domain $\Omega$
with Neumann boundary conditions, 
\begin{equation}
-\Delta\varphi=\lambda\varphi,\qquad\frac{\partial\varphi}{\partial\vec{n}}\Big\vert_{\partial\Omega}=0.\label{eq:eigen_prob}
\end{equation}
 We denote the corresponding spectrum by $\spec{\Omega}$ and note
that it can be described by an increasing sequence of eigenvalues
\[
0=\lambda_{1}<\lambda_{2}\leq\lambda_{3}\leq..\nearrow\infty.
\]
We define the following spectral counting functions: 
\begin{equation}
\nup\left(\lambda\right):=\left|\left\{ j\,\at~\lambda_{j}\leq\lambda\right\} \right|,\label{eq:upper_counting}
\end{equation}

\begin{equation}
\ndown\left(\lambda\right):=\left|\left\{ j\,\at~\lambda_{j}<\lambda\right\} \right|,\label{eq:lower_counting}
\end{equation}

\begin{equation}
N\left(\lambda\right):=\begin{cases}
\ndown\left(\lambda\right) & ~~\lambda\notin\sigma\left(\Omega\right)\\
\ndown\left(\lambda\right)+1 & ~~\lambda\in\sigma\left(\Omega\right)
\end{cases}\label{eq:counting}
\end{equation}
and denote the multiplicity of an eigenvalue by
\begin{equation}
\mult\left(\lambda\right):=\nup\left(\lambda\right)-\ndown\left(\lambda\right).\label{eq:multiplicity}
\end{equation}
For an eigenfunction $\varphi$ on $\Omega$ we denote by $\nu\left(\varphi\right)$
the number of connected components of $\Omega\backslash\varphi^{-1}\left(0\right)$,
also known as \emph{nodal domains}.

In terms of those definitions, the celebrated Courant bound reads
$\nu\left(\varphi\right)\leq N\left(\lambda\right)$, where $\varphi$
is an eigenfunction of the eigenvalue $\lambda$ \cite{Cou_ngwgmp23}.
\\
Let $\varphi$ be an eigenfunction on $\Omega$ with eigenvalue $\lambda$.
We say that $\varphi$ is a \emph{Courant-sharp eigenfunction} if
$\nu\left(\varphi\right)=N\left(\lambda\right)$. In this case we
also say that $\lambda$ is a \emph{Courant-sharp eigenvalue.}

\subsection{Main results}
\begin{thm}
\label{thm:CourantSharp_triangle} The Courant-sharp eigenvalues of
the Neumann Laplacian on the isosceles right triangle are $\lambda_{1},\lambda_{2},\lambda_{3},\lambda_{4},\lambda_{6}$.
\end{thm}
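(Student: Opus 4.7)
The plan is to combine an explicit description of the Neumann spectrum with the folding structure from Section~\ref{sec:folding_structure_triangle}, and to compare eigenfunction by eigenfunction the nodal count $\nu(\varphi_{k})$ with the spectral position $N(\lambda_{k})=k$. First I would unfold the triangle across its hypotenuse to obtain the unit square: a Neumann eigenfunction on the triangle is exactly the restriction of a Neumann eigenfunction on the square that is symmetric under reflection across the hypotenuse. Writing the square's basis $\cos(m\pi x)\cos(n\pi y)$ and symmetrizing yields
\[
\varphi_{m,n}(x,y) = \cos(m\pi x)\cos(n\pi y) + (-1)^{m+n}\cos(n\pi x)\cos(m\pi y),\qquad 0\le m\le n,
\]
with eigenvalue $\pi^{2}(m^{2}+n^{2})$. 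The parity of $m+n$ controls whether $\varphi_{m,n}$ is symmetric (even) or antisymmetric (odd) under $(x,y)\mapsto(y,x)$, matching the even/odd classification of the folding structure.

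Next, the low-index eigenvalues are handled directly. The eigenvalues $\lambda_{1},\ldots,\lambda_{7}$ are all simple, and I would factor each $\varphi_{k}$ using sum-to-product identities to read off its nodal set. Explicit counting yields $\nu(\varphi_{k})=k$ for $k\in\{1,2,3,4,6\}$ (so these are Courant-sharp), while $\nu(\varphi_{5})=4$ and $\nu(\varphi_{7})<7$ (not Courant-sharp).

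To rule out Courant-sharpness for $k\geq 7$, I would split into cases according to the parity of $m+n$. If $\varphi_{k}$ is odd, it vanishes on the diagonal $\{y=x\}$, so $\nu(\varphi_{k}) = 2\tilde{\nu}$ with $\tilde{\nu}$ the nodal count of the restriction to a half-triangle under mixed (Neumann on the two original sides, Dirichlet on $\{y=x\}$) boundary conditions. Courant's bound applied on the half-triangle gives $\tilde\nu \le N^{\mathrm{mix}}(\lambda_{k})$, where $N^{\mathrm{mix}}(\lambda)$ counts odd-parity lattice pairs $(m,n)$ with $m^{2}+n^{2}\le\lambda/\pi^{2}$. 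Using the self-similarity-based identity $N^{\mathrm{Neu\text{-}half}}(\lambda) = N(\lambda/2)$ (via the bijection $(m,n) \leftrightarrow ((n-m)/2,(n+m)/2)$ between even-parity pairs and arbitrary pairs at half the eigenvalue), one rewrites $2N^{\mathrm{mix}}(\lambda)-N(\lambda) = N(\lambda)-2N(\lambda/2)$, and Weyl's asymptotic (with positive Neumann boundary correction of order $\sqrt\lambda$) shows this is negative for every $\lambda > \pi^{2}$, excluding Courant-sharpness of all odd $\varphi_{k}$ with $k \geq 5$. If $\varphi_{k}$ is even, folding identifies $\varphi_{k}|_{\mathrm{half}}$ with a Neumann eigenfunction on a similar smaller triangle at the same eigenvalue, giving $\nu(\varphi_{k}) \leq 2\nu(\varphi_{k}|_{\mathrm{half}})$; combined with an upper bound on the number of diagonal nodal domains (controlled by the zeros of $\varphi_{m,n}(t,t) = 2\cos(m\pi t)\cos(n\pi t)$ and the intersections of the full nodal set with the axis), this yields a finite threshold beyond which Courant-sharpness fails, and the intermediate values of $k$ are verified by direct nodal counting.

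The main obstacle is the even case: here the folding alone reduces the problem to the same Neumann problem on a similar smaller triangle, and the leading-order Weyl count of the folding upper bound $2N(\lambda/2)$ matches $N(\lambda)$, so no strict gap is visible at the leading order. The argument is forced to exploit the sub-leading boundary term (which is actually favorable to $2N(\lambda/2)$) together with the explicit diagonal sign-change bound, which grows only like $\sqrt{\lambda}$, to obtain the strict inequality $\nu(\varphi_{k}) < k$ for all but finitely many indices; those finitely many intermediate cases would be dispatched by the explicit factorizations from the low-index step.
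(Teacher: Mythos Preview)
Your setup contains an error: the symmetrization of $\cos(mx)\cos(ny)$ under the hypotenuse reflection $(x,y)\mapsto(y,x)$ always carries a $+$ sign, not $(-1)^{m+n}$; Neumann eigenfunctions on the triangle are \emph{all} symmetric under this reflection. The odd/even dichotomy in Section~\ref{sec:folding_structure_triangle} refers to a different involution, the reflection $R(x,y)=(\pi-y,\pi-x)$ across the median $\{x+y=\pi\}$, and odd eigenfunctions vanish on that median, not on $\{y=x\}$. Once this is corrected, your odd case is essentially the paper's argument for $\Lambda^{(1)}$; however, invoking Weyl's two-term asymptotic ``for every $\lambda>\pi^{2}$'' is not rigorous, since the error term in Weyl can swamp the boundary correction at finite $\lambda$. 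The paper instead uses the exact bijection $B(p,q)=(p-1,q)$ to obtain $|\mathcal{O}(\lambda)|=|\mathcal{E}(\lambda)|-|\underrightarrow{\partial}\mathcal{Q}(\lambda)\cap\mathcal{E}|$ and then exhibits two explicit boundary points.

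The genuine gap is in the even case. As you observe, the bound $\nu(\varphi)\le 2N(\lambda/2)$ is useless on its own: since $|\mathcal{O}(\lambda)|<|\mathcal{E}(\lambda)|$, one has $2N(\lambda/2)>N(\lambda)$, so the inequality points the wrong way. Your proposed remedy, subtracting the number $D$ of nodal domains meeting the symmetry axis, requires a \emph{lower} bound on $D$ of order $\sqrt{\lambda}$ with a constant beating the boundary term; you have not supplied this comparison, and more seriously, the formula $\varphi|_{L}=\pm 2\cos(mt)\cos(nt)$ holds only for basis eigenfunctions $\varphi_{m,n}$. For a multiple eigenvalue the eigenspace contains linear combinations $\sum\alpha_{m,n}\varphi_{m,n}$, whose restriction to $L$ may have very few zeros, so there is no uniform lower bound on $D$ across the eigenspace. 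Saying that ``intermediate values of $k$ are verified by direct nodal counting'' does not help either, since for each multiple even eigenvalue one must control every eigenfunction, not just one. The paper handles even eigenvalues by two entirely different mechanisms you do not mention: for $\Lambda^{(2)}$ it proves (Lemma~\ref{lem:CS_implies_simplicity}) that Courant-sharpness forces simplicity of $\lambda$ in every subdomain of the $k$-frame partition and then exhibits square/rectangular subdomains where $\lambda$ is non-simple; for $\Lambda^{(3)}$ (the simple families $\lambda_{m,m}$ and $\lambda_{2m,0}$) it compares the explicit nodal count with an explicit lattice subset $\mathcal{T}_{\mathcal{Q}}(\lambda)\subsetneq\mathcal{Q}(\lambda)\cup\{(m,n)\}$. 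Your proposal needs a substitute for the $\Lambda^{(2)}$ argument in particular, and the diagonal-correction idea as stated does not provide one.
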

~
\begin{thm}
\label{thm:CourantSharp_boxes} Let $n\in\N$, $n\geq2$, and let
$\nbox$ be an $n$-dimensional box of measures $l_{1}\times l_{2}\times\ldots\times l_{n}$,
where the ratios of edge lengths are given by $\frac{l_{j}}{l_{j+1}}=2^{\nicefrac{1}{n}}$
$(1\leq j\leq n-1)$. The Courant-sharp eigenvalues of the Neumann
Laplacian on $\nbox$are $\lambda_{1},\lambda_{2},\lambda_{4},\lambda_{6}$
for $n=2$ and $\lambda_{1},\lambda_{2}$ for $n\geq3.$
\end{thm}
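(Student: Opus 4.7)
The plan is to combine the explicit product structure of Neumann eigenfunctions on $\nbox$ with the folding/unfolding classification developed in Section~\ref{sec:proof_for_boxes}, splitting the argument into a direct verification of the low-lying spectrum and an asymptotic elimination of the tail. With $l_j = 2^{(n-j)/n}l_n$, the Neumann spectrum is $\spec{\nbox} = \{\lambda_{\vec m}=\pi^2\sum_{j=1}^n (m_j/l_j)^2 : \vec m\in\Z_{\geq 0}^n\}$, with product eigenfunctions $\varphi_{\vec m}(\xvec)=\prod_j\cos(\pi m_j x_j/l_j)$ of nodal count $\prod_j(m_j+1)$, and Appendix~\ref{sec:appendix_multiplicity_boxes} supplies the identities needed to order the first distinct eigenvalues and read off their multiplicities.

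The first phase is direct inspection of the low part of the spectrum. For every $n\geq 2$, $\lambda_1 = 0$ is trivially Courant-sharp, and the simple $\lambda_2 = \pi^2/l_1^2$, with unique eigenfunction $\cos(\pi x_1/l_1)$, gives $\nu=2=N(\lambda_2)$. For $n=2$ I would further verify that $\lambda_4 = 3\pi^2/2$ (from $\vec m=(1,1)$, $\nu=4$) and $\lambda_6 = 3\pi^2$ (from $\vec m=(2,1)$, $\nu=6$) are Courant-sharp, while $\lambda_3 = \pi^2$, $\lambda_5 = 2\pi^2$, $\lambda_7 = 4\pi^2$ all fail since their eigenfunctions depend on a single coordinate and yield only $2$ or $3$ nodal domains. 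For $n \geq 3$, an analogous direct check rules out every $\lambda_k$ with $3\leq k\leq K$ for some explicit $K$: the eigenfunctions in this range involve too few coordinate directions to keep up with the higher-dimensional Weyl growth of $N(\lambda)$.

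The tail is handled by combining the folding structure with a lattice-point count. For a simple eigenvalue $\lambda_{\vec m}$, Weyl's law applied to the ellipsoid $\{\vec x : \sum_j (x_j/l_j)^2 \leq \lambda/\pi^2\}$ intersected with the positive orthant gives $N(\lambda) \sim \omega_n \vol{\nbox} \lambda^{n/2}/(2\pi)^n$, where $\omega_n$ is the volume of the unit $n$-ball; combining this with AM-GM on $\sum (m_j/l_j)^2$ and a separate analysis of the ``concentrated'' regime (where few $m_j$ are nonzero) yields $\prod_j(m_j+1) < N(\lambda_{\vec m})$ outside a finite exceptional set. For multiple eigenvalues, the even/odd splitting across the reflection hyperplane $x_1 = l_1/2$ decomposes the eigenspace into orthogonal subspaces, and any eigenfunction $\varphi = \varphi_{\mathrm{e}} + \varphi_{\mathrm{o}}$ either has $\varphi_{\mathrm{o}}\neq 0$ vanishing on $x_1 = l_1/2$ (forcing that hyperplane into the nodal set and allowing recursion into each half via folding) or is purely even and folds to a lower-level problem, so the accumulated forced nodal structure delivers $\nu(\varphi) < N(\lambda)$ in the tail without invoking Faber-Krahn.

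The principal obstacle is precisely this avoidance of Faber-Krahn: the nodal-count upper bound must be extracted entirely from the product structure and the iterated folding, and matching the tail cutoff with the finite range of the first phase requires explicit control of the Weyl remainder for the lattice-point count. This calculation is tractable given the pure product geometry of $\nbox$, but the bookkeeping for multiple eigenvalues is delicate; it relies on the multiplicity identities of Appendix~\ref{sec:appendix_multiplicity_boxes}, which relate the $n$-dimensional case to the planar one and so keep the combinatorics of the even/odd decomposition manageable.
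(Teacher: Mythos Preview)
Your proposal diverges from the paper in two places, and one of them contains a genuine gap.

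For \emph{simple} eigenvalues you propose Weyl asymptotics together with AM--GM to bound $\prod_j(m_j+1)$ against $N(\lambda_{\vec m})$, acknowledging that matching the tail cutoff to the finite initial check requires explicit remainder control. The paper avoids this entirely by a direct lattice-point containment (Proposition~\ref{prop:non_CS_box}): the box $\qbox{\lambda_{\vec m}}=\{\vec{\tilde m}\in\Q:\tilde m_j\le m_j\ \forall j\}$ has exactly $\prod_j(m_j+1)=\nu(\varphi_{\vec m})$ points and sits inside the ellipsoidal region $\Q(\lambda_{\vec m})\cup\{\vec m\}$ of cardinality $N(\lambda_{\vec m})$; strict containment is obtained by exhibiting one explicit extra point $\vec m'$, found by a short case analysis on the shape of $\vec m$. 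No asymptotics, no remainder, and the argument is uniform in $n$. Your route could be made to work, but it is strictly harder and you have not carried out the remainder estimate.

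For \emph{multiple} eigenvalues there is a gap. First, your decomposition $\varphi=\varphi_{\mathrm e}+\varphi_{\mathrm o}$ overlooks Lemma~\ref{lem:eigenfunction_symmetry_box}: for these particular boxes the coefficient $\pr{\lambda}$ in the basis $\bas$ fixes the parity of $m_1$ for \emph{every} $\vec m$ with $\lambda_{\vec m}=\lambda$, so each eigenspace already has definite parity and one of $\varphi_{\mathrm e},\varphi_{\mathrm o}$ is identically zero. More seriously, even with the correct dichotomy your ``recursion into each half via folding'' does not produce a nodal-count inequality for an arbitrary linear combination in a degenerate eigenspace; in the even case the nodal count of $\varphi$ is not simply related to that of $\fold\varphi$, since an even eigenfunction need not vanish on $\line$. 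The paper's mechanism is different: writing $\lambda=\ohm^{2k}\oval$ with $\oval$ odd (Corollary~\ref{cor:eiganvalue_is_odd_times_power_of_gamma}), every eigenfunction of $\lambda$ vanishes on the full $k$-frame $S^{(k)}$; applying Courant on each subdomain of the $k$-frame partition, summing, and comparing to $\nup(\lambda)$ via the variational principle (the box analogue of Lemma~\ref{lem:CS_implies_simplicity}) forces any Courant-sharp eigenvalue to be \emph{simple}. This single lemma reduces the whole problem to the simple case handled above. Your ``accumulated forced nodal structure'' gestures toward the $k$-frame but does not supply this argument, and without it the multiple-eigenvalue case is not closed.
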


\section{Eigenfunction Folding Structure of the triangle \label{sec:folding_structure_triangle}}

\noindent We consider the following scaling for the isosceles right
triangle, 
\[
\mcl D=\{(x,y)\in[0,\pi]\times[0,\pi]\,\at\,y\leq x\}.
\]

\noindent For geometric convenience to be exploited later, $\mcl D$
denotes the closed domain, and the Laplacian is defined on its interior,
$\Omega=\trngle^{\circ}$.

\noindent Denote $\mathbb{N}_{0}:=\N\cup\left\{ 0\right\} $ and define
the set 
\begin{equation}
\Q:=\left\{ \left.\left(m,n\right)\in\mathbb{N}_{0}\times\mathbb{N}_{0}\right|m\geq n\right\} ,\label{eq:quantum_numbers}
\end{equation}
which we call the set of \emph{quantum numbers.} A complete orthogonal
basis of eigenfunctions is given by 
\begin{equation}
\varphi_{m,n}(x,y)=\cos(mx)\cos(ny)+\cos(my)\cos(nx)\ ;\ \left(m,n\right)\in\Q,\label{eq:eigenfunction}
\end{equation}
and the spectrum is given by 
\[
\spec{\trngle}=\left\{ \lambda_{m,n}=\left\Vert \left(m,n\right)\right\Vert ^{2}\,\at\,\left(m,n\right)\in\Q\right\} ,
\]
where
\[
\left\Vert \left(m,n\right)\right\Vert ^{2}=m^{2}+n^{2}.
\]
It is useful to define 
\[
\Q(\lambda):=\left\{ \left(m,n\right)\in\Q\,\at\,\left\Vert \left(m,n\right)\right\Vert ^{2}<\lambda\right\} 
\]
and observe that
\[
\ndown\left(\lambda\right)=\left|\Q(\lambda)\right|.
\]
The isosceles right triangle $\trngle$ is symmetric with respect
to the median to the hypotenuse, 
\[
\line=\left\{ \left.\left(x,y\right)\in\trngle\right|x+y=\pi\right\} 
\]
and the symmetry is expressed by
\[
R\left(x,y\right)=\left(\pi-y,\pi-x\right).
\]
We describe in the following a special feature of eigenfunctions on
the triangle, which is based on the symmetry above.
\begin{lem}
\label{lem:eigenfunction_symmetry}\emph{ }Let $\lambda\in\sigma\left(\mathcal{D}\right)$,
then its corresponding eigenfunctions are odd (even) with respect
to $\line$\emph{ if and only if }$\lambda$ is odd (even)\emph{.}\end{lem}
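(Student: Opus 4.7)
The plan is to verify the claim directly on the explicit orthogonal basis $\{\varphi_{m,n}\}_{(m,n)\in\Q}$ given in \eqref{eq:eigenfunction}, and then use a simple number-theoretic observation to match the parity of $m+n$ to the parity of the eigenvalue $\lambda_{m,n}=m^{2}+n^{2}$.

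First, using the identity $\cos(k(\pi-t))=(-1)^{k}\cos(kt)$ for $k\in\N_{0}$, I would compute
\[
\varphi_{m,n}(R(x,y))=\varphi_{m,n}(\pi-y,\pi-x)=(-1)^{m+n}\bigl[\cos(my)\cos(nx)+\cos(mx)\cos(ny)\bigr]=(-1)^{m+n}\varphi_{m,n}(x,y).
\]
Hence each basis eigenfunction $\varphi_{m,n}$ is even under $R$ when $m+n$ is even and odd under $R$ when $m+n$ is odd.

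Next, I would observe that $m^{2}\equiv m\pmod 2$ for every integer $m$, so $\lambda_{m,n}=m^{2}+n^{2}\equiv m+n\pmod 2$. Thus the parity of $\lambda_{m,n}$ coincides with the parity of the sign $(-1)^{m+n}$ appearing above: $\varphi_{m,n}$ is even under $R$ iff $\lambda_{m,n}$ is even, and odd under $R$ iff $\lambda_{m,n}$ is odd.

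The only point requiring care is eigenvalue multiplicity. If $\lambda\in\spec{\trngle}$ is attained by several pairs $(m_{1},n_{1}),(m_{2},n_{2}),\ldots\in\Q$, then $m_{j}^{2}+n_{j}^{2}=\lambda$ forces each sum $m_{j}+n_{j}$ to have the same parity as $\lambda$. Consequently, every basis function $\varphi_{m_{j},n_{j}}$ in the $\lambda$-eigenspace has the same behaviour under $R$, and every eigenfunction of $\lambda$, being a linear combination of these basis functions, inherits that behaviour. This gives both implications of the lemma. I do not anticipate a serious obstacle here; the entire argument is a direct computation combined with the parity identity $m^{2}+n^{2}\equiv m+n\pmod 2$.
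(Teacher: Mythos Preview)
Your proposal is correct and follows essentially the same approach as the paper: both compute $\varphi_{m,n}\circ R=(-1)^{m+n}\varphi_{m,n}$ directly from the cosine identity, observe that the parity of $m+n$ coincides with that of $\lambda_{m,n}=m^{2}+n^{2}$, and then extend to the full eigenspace via the basis $\{\varphi_{m,n}\}_{m^{2}+n^{2}=\lambda}$. Your treatment of the multiplicity issue is slightly more explicit than the paper's one-line remark, but the argument is the same.
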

\begin{proof}
Let $\lambda_{m,n}\in\sigma\left(\trngle\right)$, we get
\begin{align*}
\varphi_{m,n}\left(R\left(x,y\right)\right) & =\varphi_{m,n}(\pi-y,\pi-x)\\
 & =\cos(m\pi-my)\cdot\cos(n\pi-nx)+\cos(m\pi-mx)\cdot\cos\left(n\pi-ny\right)\\
 & =(-1)^{m+n}\left[\cos(mx)\cos(ny)+\cos(my)\cos(nx)\right]=(-1)^{m+n}\varphi_{m,n}\left(x,y\right),
\end{align*}

so that $\varphi_{m,n}$ is odd if and only if $m\neq n\,\,(\textrm{mod~2})$
and even if and only if $m=n\,\,(\textrm{mod~2})$. As $\lambda_{m,n}=m^{2}+n^{2}$
we get that $\varphi_{m,n}$ is odd if and only if $\lambda_{m,n}$
is odd and even if and only if $\lambda_{m,n}$ is even. The lemma
now follows since the elements of $\left\{ \varphi_{m,n}\right\} _{m^{2}+n^{2}=\lambda}$
form a basis for the eigenspace.
\end{proof}
Lemma \ref{lem:eigenfunction_symmetry} motivates the following definition.
\begin{defn}
\label{def:odd_even_spectrum}We define the subsets of $\Q$ that
correspond to the odd and even eigenvalues

\begin{equation}
\begin{array}{c}
\O\coloneqq\left\{ (m,n)\in\Q~|~m\neq n\,(\textrm{mod~2})\right\} \\
\\
\E\coloneqq\left\{ (m,n)\in\Q~|~m=n\,(\textrm{mod~2})\right\} 
\end{array}\label{eq:odd_even_qn}
\end{equation}
and we denote the corresponding sets of eigenvalues by
\[
\ospec{\trngle}\coloneqq\left\{ \lambda_{m,n}\at\left(m,n\right)\in\O\right\} 
\]
\[
\espec{\trngle}\coloneqq\left\{ \lambda_{m,n}\at\left(m,n\right)\in\E\right\} ,
\]
where in those sets each eigenvalue appears as many times as its multiplicity. 
\end{defn}
Denote 
\[
\hftrngle:=\left\{ \left(x,y\right)\in\trngle\at\left(x+y,x-y\right)\in\left[0,\pi\right]^{2}\right\} ,
\]
and observe that $\line$ partitions $\mathcal{D}$ into the two isometric
triangles $\hftrngle$ and $\left(\trngle\backslash\hftrngle\right)\cup\line$,
each is a scaled version of $\mathcal{D}$ by a factor $\sqrt{2}$.
The following defines the transformations which describe the similarity
relation between $\trngle$ and $\hftrngle$.
\begin{defn}
\label{def:fold_unfold_coords} We define the \emph{coordinate folding
transformation} 

\begin{align}
\begin{array}{c}
F:\hftrngle\to\trngle\\
\\
F\left(x,y\right)\coloneqq\left(x+y,x-y\right)
\end{array}\label{eq:fold_coordinates}
\end{align}
and the \emph{coordinate unfolding transformation} as the inverse
of $F$ by 
\begin{equation}
\begin{array}{c}
U:\trngle\to\hftrngle\\
\\
U\left(u,v\right)=\left(\frac{u+v}{2},\frac{u-v}{2}\right).
\end{array}\label{eq:unfold_coords}
\end{equation}
\end{defn}
\begin{rem*}
The mappings $F$ and $U$ are indeed similarity transformations between
$\trngle$ and $\hftrngle$ as
\[
F(x,y)=\underbrace{\sqrt{2}}_{\textrm{scaling}}\underbrace{\left(\begin{array}{cc}
\cos\frac{\pi}{4} & -\sin\frac{\pi}{4}\\
\sin\frac{\pi}{4} & \cos\frac{\pi}{4}
\end{array}\right)\left(\begin{array}{cc}
0 & 1\\
1 & 0
\end{array}\right)}_{\textrm{isometry}}\left(\begin{array}{c}
x\\
y
\end{array}\right)
\]
and
\end{rem*}
\[
U(x,y)=\underbrace{\frac{1}{\sqrt{2}}}_{\textrm{scaling}}\underbrace{\left(\begin{array}{cc}
\cos\frac{\pi}{4} & -\sin\frac{\pi}{4}\\
\sin\frac{\pi}{4} & \cos\frac{\pi}{4}
\end{array}\right)\left(\begin{array}{cc}
0 & 1\\
1 & 0
\end{array}\right)}_{\textrm{isometry}}\left(\begin{array}{c}
x\\
y
\end{array}\right).
\]
The next definition introduces the notion of folding and unfolding
of an eigenfunction.
\begin{defn}
\label{def:fold_unfold_efunc} Let $\varphi$ be an eigenfunction
corresponding to $\lambda\in\sigma\left(\mathcal{D}\right)$, \end{defn}
\begin{enumerate}
\item Assume $\lambda\in\espec{\trngle}$, we define the \emph{folded function,
$\fold\varphi:\,\trngle\rightarrow\R$, as} 
\begin{equation}
\fold\varphi\left(x,y\right)=\varphi\circ U\left(x,y\right)\,,\,\left(x,y\right)\in\trngle.\label{eq:fold_fcn}
\end{equation}

\item We define the \emph{unfolded function, $\unfold\varphi:\,\trngle\rightarrow\R$,
as }
\begin{equation}
\unfold\varphi\left(x,y\right)=\begin{cases}
\varphi\circ F\left(x,y\right) & \left(x,y\right)\in\hftrngle\\
\left(\varphi\circ F\right)\circ R\left(x,y\right) & \left(x,y\right)\in\mathcal{D}\backslash\hftrngle
\end{cases}.\label{eq:unfold_fcn}
\end{equation}

\end{enumerate}
Note that only folding of an even eigenfunction gives a new function
whose normal derivative vanishes on $\partial\trngle$. Therefore
it follows that only folding of an even eigenfunction results with
another eigenfunction. Unfolding of any eigenfunction, always results
with another eigenfunction. Hence we consider the foldings for the
even eigenfunctions and the unfoldings for all eigenfunctions. We
also remark that the folded (unfolded) eigenfunction is of an eigenvalue
which is twice as small (large), since the coordinate folding (unfolding)
transformation is a similarity transofrmation with a scaling factor
of $\sqrt{2}$ ~($1/\sqrt{2})$. Those results are stated and proved
below.
\begin{lem}
\label{lem:fold_unfold_efunc}Let 
\[
\varphi=\underset{k^{2}+l^{2}=\lambda}{\sum}\alpha_{k,l}\varphi_{k,l}
\]
 be an eigenfunction corresponding to the eigenvalue $\lambda$ ,
then the following holds: 
\begin{enumerate}
\item If $\lambda\in\espec{\trngle}$, then the folded function $\fold\varphi$
is an eigenfunction corresponding to the eigenvalue $\frac{\lambda}{2}$
and is given by\emph{
\begin{equation}
\begin{array}{c}
\fold\varphi=\underset{k^{2}+l^{2}=\lambda}{\sum}\alpha_{k,l}\varphi_{_{F_{\Q}\left(k,l\right)}},\\
\\
\textrm{where\,\,\,\, }F_{\Q}\left(k,l\right)=\left(\frac{k+l}{2},\frac{k-l}{2}\right).
\end{array}\label{eq:formula folded eigenfunction}
\end{equation}
}
\item The unfolded function $\unfold\varphi$ is an eigenfunction corresponding
to the eigenvalue $2\lambda$ and is given by
\begin{equation}
\begin{array}{c}
\unfold\varphi=\underset{k^{2}+l^{2}=\lambda}{\sum}\alpha_{k,l}\varphi_{_{U_{\Q}\left(k,l\right)}},\\
\\
\textrm{where\,\,\,\, }U_{\Q}\left(k,l\right)=\left(k+l,k-l\right).
\end{array}\label{eq:formula unfolded}
\end{equation}

\end{enumerate}
\end{lem}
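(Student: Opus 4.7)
By the linearity of the operators $\fold$ and $\unfold$, it suffices to verify the two formulas on each basis element $\varphi_{k,l}$ with $k^{2}+l^{2}=\lambda$ and then sum over $k,l$ with coefficients $\alpha_{k,l}$.

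For part (1), I would compute $\fold\varphi_{k,l}(x,y)=\varphi_{k,l}(U(x,y))$ directly. Setting $p=\tfrac{k+l}{2}$ and $q=\tfrac{k-l}{2}$ and substituting $U(x,y)=\bigl(\tfrac{x+y}{2},\tfrac{x-y}{2}\bigr)$ into the definition \eqref{eq:eigenfunction} of $\varphi_{k,l}$ produces four cosine products of the form $\cos A\cos B$. Applying $\cos A\cos B=\tfrac{1}{2}[\cos(A+B)+\cos(A-B)]$ and then regrouping via $\cos(a+b)+\cos(a-b)=2\cos a\cos b$ collapses the eight resulting terms back to $\cos(px)\cos(qy)+\cos(qx)\cos(py)=\varphi_{p,q}(x,y)=\varphi_{F_{\Q}(k,l)}(x,y)$. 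The hypothesis $(k,l)\in\E$ gives $k\equiv l\pmod{2}$, so $p,q\in\mathbb{N}_{0}$; the inequality $k\geq l\geq 0$ yields $p\geq q\geq 0$, hence $F_{\Q}(k,l)\in\Q$. Finally $p^{2}+q^{2}=\tfrac{k^{2}+l^{2}}{2}=\tfrac{\lambda}{2}$, which identifies $\fold\varphi_{k,l}$ as the claimed eigenfunction with eigenvalue $\lambda/2$.

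For part (2), on $\hftrngle$ the same product-to-sum computation, but with $P=k+l$ and $Q=k-l$ in place of $p,q$, yields $\varphi_{k,l}(F(x,y))=\varphi_{P,Q}(x,y)=\varphi_{U_{\Q}(k,l)}(x,y)$ throughout $\hftrngle$. On the complementary region $\trngle\setminus\hftrngle$ I would evaluate $F(R(x,y))=(2\pi-x-y,\,x-y)$ and invoke the $2\pi$-periodicity of cosine (available because $k,l\in\mathbb{N}_{0}$) to obtain $\varphi_{k,l}(F(R(x,y)))=\varphi_{k,l}(F(x,y))$. Combined with the previous step, this shows that both branches of the piecewise definition of $\unfold\varphi_{k,l}$ agree with the single smooth function $\varphi_{U_{\Q}(k,l)}$ on all of $\trngle$. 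The checks $U_{\Q}(k,l)=(k+l,k-l)\in\Q$ and $(k+l)^{2}+(k-l)^{2}=2(k^{2}+l^{2})=2\lambda$ then complete the verification.

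No step presents a serious obstacle: the heart of the argument is the elementary but slightly tedious trigonometric identity showing that pre-composing a basis eigenfunction $\varphi_{k,l}$ with the similarity map $U$ (resp.\ $F$) collapses it back to an eigenfunction of the same form, with quantum numbers transformed by $F_{\Q}$ (resp.\ $U_{\Q}$). The only delicate point is the consistency of the piecewise definition of $\unfold\varphi$ across $\line$, which is resolved cleanly by the $2\pi$-periodicity observation above and is the reason the $R$-extension in Definition \ref{def:fold_unfold_efunc} produces a globally smooth function rather than merely a continuous one.
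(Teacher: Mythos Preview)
Your proposal is correct and follows essentially the same approach as the paper: reduce by linearity to the basis elements $\varphi_{k,l}$, then use the product-to-sum identity $2\cos\alpha\cos\beta=\cos(\alpha+\beta)+\cos(\alpha-\beta)$ to identify $\fold\varphi_{k,l}=\varphi_{F_{\Q}(k,l)}$ and $\unfold\varphi_{k,l}=\varphi_{U_{\Q}(k,l)}$, check membership in $\Q$, and read off the eigenvalues. Your treatment is in fact slightly more careful than the paper's, which simply asserts the outcome of the calculation: you explicitly verify that the two branches of the piecewise definition of $\unfold\varphi_{k,l}$ agree on $\trngle\setminus\hftrngle$ via $F(R(x,y))=(2\pi-x-y,\,x-y)$ and $2\pi$-periodicity, a point the paper leaves implicit.
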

\begin{proof}
First consider the case that $\varphi=\varphi_{k,l}$. A simple calculation
of $\fold\varphi_{k,l}$ and $\unfold\varphi_{k,l}$ involving the
trigonometric identity 
\[
2\cos\left(\alpha\right)\cos\left(\beta\right)=\cos\left(\alpha+\mbox{\ensuremath{\beta}}\right)+\cos\left(\alpha-\beta\right),
\]
yields 
\[
\fold\varphi_{k,l}=\varphi_{F_{\Q}\left(k,l\right)}
\]
and
\[
\unfold\varphi_{k,l}=\varphi_{U_{\Q}(k,l)}.
\]
To conclude that $\fold\varphi_{k,l}$ and $\unfold\varphi_{k,l}$
are eigenfunctions we need to verify that $F_{Q}\left(k,l\right),~U_{Q}\left(k,l\right)\in\Q$.
This is obvious for $U_{Q}\left(k,l\right)$, and as for $F_{Q}\left(k,l\right)$
we use that $\lambda_{k,l}\in\espec{\trngle}$ implies $\left(k,l\right)\in\E$
and thus $\left(\frac{k+l}{2},\frac{k-l}{2}\right)\in\Q$. The last
part of the claim is that $\fold\varphi_{k,l}$ and $\unfold\varphi_{k,l}$
correspond to eigenvalues $\frac{1}{2}\lambda_{k,l}$ and $2\lambda_{k,l}$.
Indeed we have

\noindent 
\[
\left\Vert F_{Q}\left(k,l\right)\right\Vert ^{2}=\frac{1}{2}\left(k^{2}+l^{2}\right)=\frac{1}{2}\lambda_{k,l},
\]
and
\[
\left\Vert U_{Q}\left(k,l\right)\right\Vert ^{2}=2\left(k^{2}+l^{2}\right)=2\lambda_{k,l}.
\]
Finally, using the linearity of $\fold$ and $\unfold$ we conclude
that the claim holds for 
\[
\varphi=\underset{k^{2}+l^{2}=\lambda}{\sum}\alpha_{k,l}\varphi_{k,l}.
\]

\end{proof}
The last lemma allows for a useful characterization of all eigenvalues.
\begin{cor}
\begin{singlespace}
~\label{cor:eiganvalue_is_odd_times_power_of_two}\end{singlespace}

\begin{enumerate}
\item Let $0\neq\lambda\in\spec{\trngle}$. Then there exist unique $\oval\in\ospec{\trngle}$
and $k\in\N_{0}$ such that $\lambda=2^{k}\oval$. Furthermore, $\mult\left(\lambda\right)=\mult\left(\oval\right)$.
\item Let $\oval\in\ospec{\trngle}$ and $k\in\N_{0}$. Then $2^{k}\oval\in\spec{\trngle}$.
\end{enumerate}
\end{cor}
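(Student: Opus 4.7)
The plan is to deduce both parts of the corollary by iterating Lemma \ref{lem:fold_unfold_efunc}, exploiting that every nonzero $\lambda \in \spec{\trngle}$ is a positive integer (since $\lambda = m^{2}+n^{2}$) and therefore has a finite $2$-adic valuation. For part (2), I would proceed by induction on $k$: the base case $k = 0$ is immediate since $\ospec{\trngle} \subseteq \spec{\trngle}$, and for the inductive step, starting from an eigenfunction of eigenvalue $2^{k}\oval$, applying $\unfold$ and invoking Lemma \ref{lem:fold_unfold_efunc}(2) produces an eigenfunction of $2^{k+1}\oval$, so $2^{k+1}\oval \in \spec{\trngle}$.

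For part (1), existence of the decomposition follows by iterative folding. If $\lambda \in \ospec{\trngle}$, then $k = 0$ works; otherwise $\lambda \in \espec{\trngle}$, and Lemma \ref{lem:fold_unfold_efunc}(1) produces an eigenfunction of $\lambda/2 \in \spec{\trngle}$. Since the positive integer $\lambda$ has finite $2$-adic valuation, iterating this halving procedure must terminate at an odd eigenvalue $\oval \in \ospec{\trngle}$ after finitely many steps, giving $\lambda = 2^{k}\oval$. Uniqueness is the standard $2$-adic argument: if $2^{k_{1}}\oval_{1} = 2^{k_{2}}\oval_{2}$ with both $\oval_{i}$ odd, then comparing $2$-adic valuations forces $k_{1} = k_{2}$ and hence $\oval_{1} = \oval_{2}$.

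For the multiplicity assertion $\mult(\lambda) = \mult(\oval)$, the key step is to show that whenever $\mu$ is an even eigenvalue, $\fold$ and $\unfold$ induce mutually inverse linear isomorphisms between the eigenspace of $\mu$ and the eigenspace of $\mu/2$. Directly from $F_{\Q}(k,l) = ((k+l)/2,(k-l)/2)$ and $U_{\Q}(k,l) = (k+l,k-l)$ one checks that $F_{\Q} \circ U_{\Q} = U_{\Q} \circ F_{\Q} = \id$ on the relevant quantum-number sets, and the formulas \eqref{eq:formula folded eigenfunction} and \eqref{eq:formula unfolded} then lift this to $\unfold \circ \fold$ and $\fold \circ \unfold$ being identities on the respective eigenspaces. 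Hence $\mult(\mu) = \mult(\mu/2)$, and iterating $k$ times yields $\mult(\lambda) = \mult(\oval)$. I do not expect any substantive obstacle here; everything reduces to Lemma \ref{lem:fold_unfold_efunc} together with elementary $2$-adic bookkeeping, with the only point requiring genuine care being that the fold and unfold operations provide true bijections between the eigenspaces (rather than merely injections), which is handled by the explicit combinatorial inverses above.
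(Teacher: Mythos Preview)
Your proposal is correct and follows essentially the same approach as the paper: both arguments deduce part~(2) by iterated unfolding via Lemma~\ref{lem:fold_unfold_efunc}(2), obtain the decomposition in part~(1) from the finite $2$-adic valuation of the positive integer $\lambda$ together with iterated folding, and establish the multiplicity equality by observing that $\fold^{k}$ and $\unfold^{k}$ are mutually inverse linear isomorphisms between the eigenspaces. The only cosmetic difference is that the paper first writes $\lambda = 2^{k}\oval$ as a purely arithmetic fact and then verifies $\oval\in\spec{\trngle}$ by applying $\fold^{k}$ once, whereas you fold step by step until an odd eigenvalue is reached.
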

\begin{proof}
\begin{singlespace}
~\end{singlespace}

\begin{enumerate}
\item Let $0\neq\lambda\in\spec{\trngle}$. As $\sigma\left(\trngle\right)\subseteq\mathbb{N}_{0}$
we can write uniquely 
\begin{equation}
\lambda=2^{k}\oval\,\,\text{with}~~\,k\in\mathbb{N}_{0},\,\,\oval\in\mathbb{N}_{0}\,\,\textrm{odd}.\label{eq:eigenvalue_presented_in_terms_of_odd_evalue}
\end{equation}
In order to show $\oval\in\ospec{\trngle}$ consider $\varphi$ to
be an eigenfunction of $\lambda=2^{k}\oval$. By Lemma \ref{lem:fold_unfold_efunc}
it follows that $\fold^{k}\varphi$ is an eigenfunction and its corresponding
eigenvalue equals $2^{-k}\lambda=\oval$. Hence, $\oval\in\spec{\trngle}$,
but as it is odd we further have $\oval\in\ospec{\trngle}$. The equality
of multiplicities of $\lambda$ and $\oval$ arises as $\fold^{k}$
is a linear isomorphism (its inverse is $\unfold^{k}$) from the eigenspace
of $\lambda$ to the eigenspace of $\oval$.
\item Let $\oval\in\ospec{\trngle}$ and $k\in\N_{0}$. By Lemma \ref{lem:fold_unfold_efunc},
$\unfold^{k}$ maps an eigenfunction of $\oval$ to an eigenfunction
of $2^{k}\oval$.
\end{enumerate}
\end{proof}
The last corollary implies that the spectrum has the following hierarchical
structure
\begin{equation}
\sigma\left(\mathcal{D}\right)=\bigsqcup_{k=0}^{\infty}\left(\bigcup_{\oval\in\ospec{\trngle}}\left\{ 2^{k}\oval\right\} \right)=\bigsqcup_{k=0}^{\infty}\left(\bigcup_{\left(m,n\right)\in\O}\left\{ \lambda_{U_{\Q}^{k}\left(m,n\right)}\right\} \right),\label{eq:hirarchical_structure_of_spectrum}
\end{equation}
where the second equality follows since 
\[
\oval\in\ospec{\trngle}~~~\iff~~~\oval=\lambda_{m,n}~\textrm{s.t. }\left(m,n\right)\in\O~~~\iff~~~2^{k}\oval=\lambda_{U_{\Q}^{k}\left(m,n\right)}\textrm{ s.t. }\left(m,n\right)\in\O.
\]
 We will use this structure to divide the spectrum into three subsets,
and rule out separately the Courant-sharpness of the eigenvalues in
each of those subsets (the three parts of Proposition \ref{prop:rulling_all_courant_sharp_traingle}).

\begin{singlespace}
In the following we will show for a given $k\in\mathbb{N}_{0}$ and
any $\oval\in\ospec{\trngle}$ that the eigenfunctions corresponding
to the eigenvalue $2^{k}\oval$ all vanish on a specific ($k$-dependent)
subset of $\trngle$.
\end{singlespace}
\begin{defn}
\begin{singlespace}
Let $A\subseteq\mathcal{D}$, We define the \emph{unfold} of $A$
as
\begin{equation}
U\left(A\right)=\left\{ \left(x,y\right)\in\hftrngle\,\at\,F\left(x,y\right)\in A\right\} \cup\left\{ \left(x,y\right)\in\trngle\backslash\hftrngle\,\at\,F\circ R\left(x,y\right)\in A\right\} .\label{eq:unfolding_of_a_set}
\end{equation}
For $k\geq0$ we define the \emph{k-frame }as (see Figure \ref{fig:four_unfoldings-1})
\begin{equation}
S^{(0)}=\line\,\,\,\,\textrm{and \,\,\,}\forall k\geq1,~S^{(k)}=U^{k}\left(\line\right).\label{eq:k_frame}
\end{equation}
\end{singlespace}

\end{defn}
\begin{figure}
~~~~~~~~~~~~~~~~~~~~~~~~~~~~~~~~~~~~~~~~~~~~~~~~~~~~~~~~~~~~~~~~~~~~~~~~~~~~~~~~~~~~~~~~~~~~~~~~~~~~~~~~~~~~~~~~~~~~~~~~~~~\includegraphics[scale=0.5]{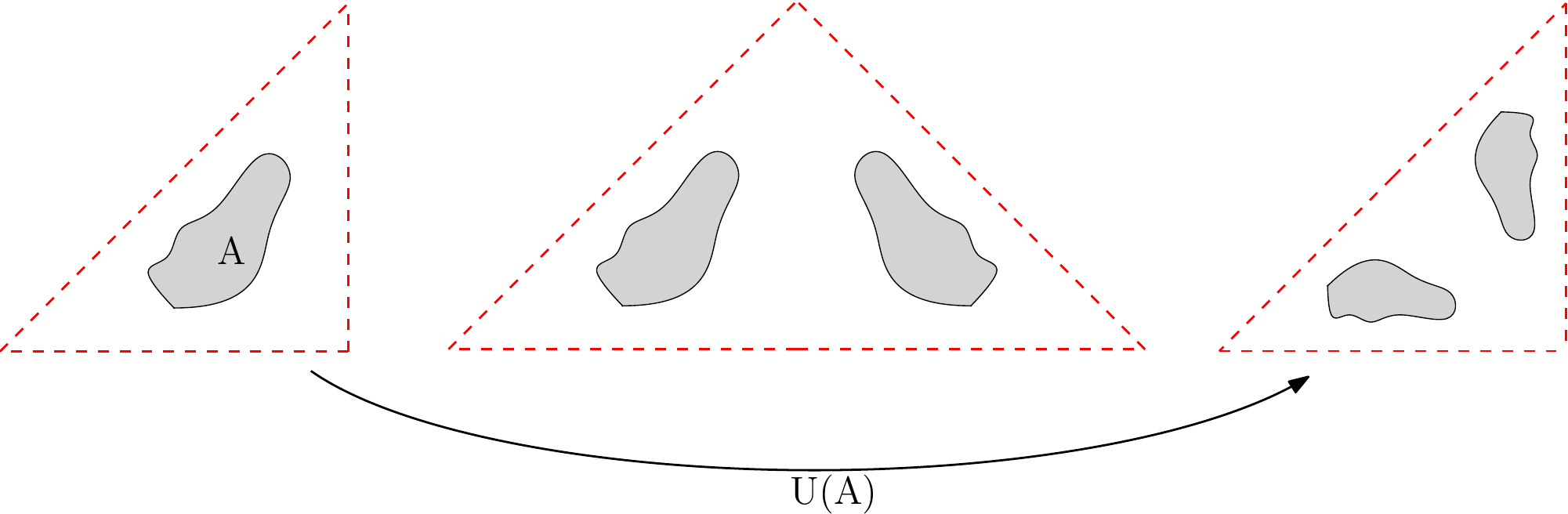}

\protect\caption{Unfolding of a set}
\end{figure}
\begin{figure}
\medskip{}

\begin{minipage}[c]{0.3\columnwidth}%
\emph{\uline{0-frame}}

\includegraphics[scale=0.35]{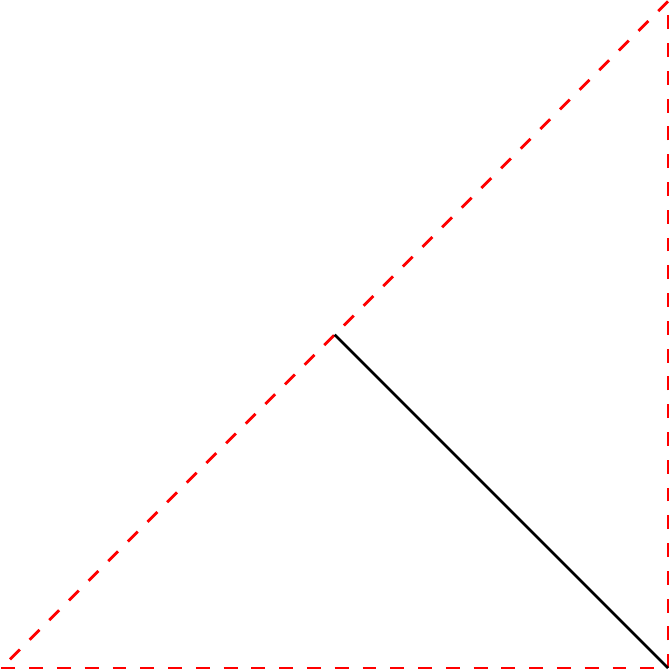}\hspace*{\fill}$\begin{array}{c}
\longrightarrow\\
_{U}\\
\\
\\
\end{array}$\hspace*{\fill}%
\end{minipage}%
\begin{minipage}[c]{0.3\columnwidth}%
\emph{\uline{1-frame}}

\includegraphics[scale=0.35]{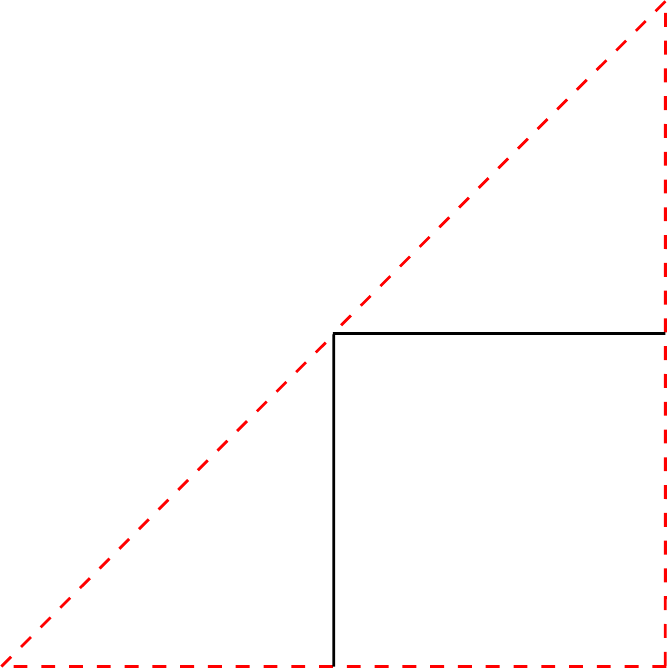}\hspace*{\fill}$\begin{array}{c}
\longrightarrow\\
_{U}\\
\\
\\
\end{array}$\hspace*{\fill}%
\end{minipage}%
\begin{minipage}[c]{0.3\columnwidth}%
\emph{\uline{2-frame}}

\includegraphics[scale=0.35]{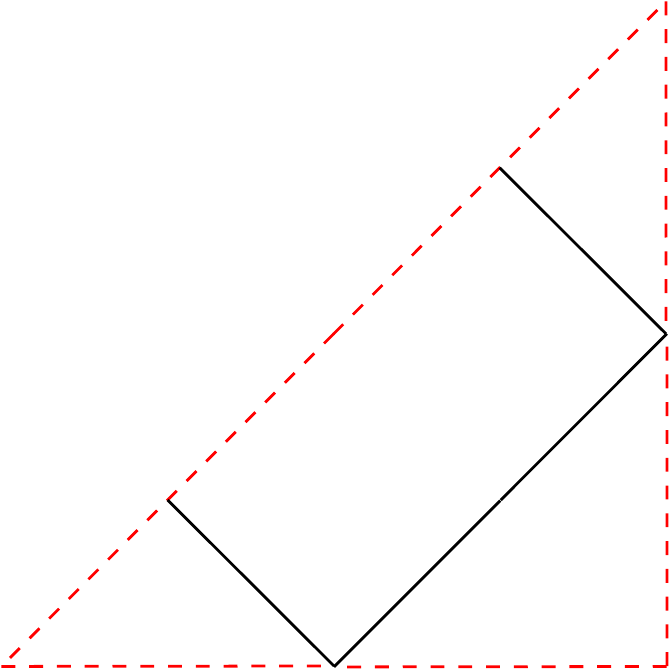}\hspace*{\fill}$\begin{array}{c}
\longrightarrow\\
_{U}\\
\\
\\
\end{array}$\hspace*{\fill}%
\end{minipage}

\vspace{4mm}

\begin{minipage}[c]{0.35\columnwidth}%
\emph{\uline{3-frame}}

\includegraphics[scale=0.35]{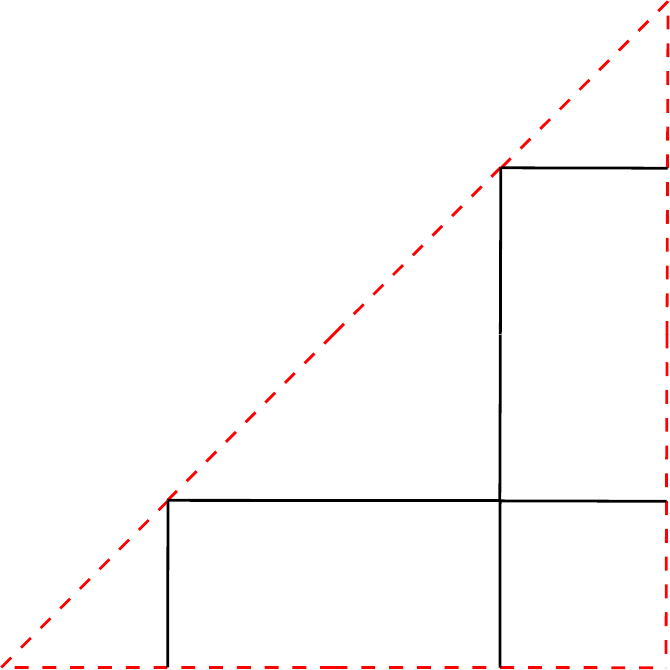}\hspace*{\fill}$\begin{array}{c}
\longrightarrow\\
_{U}\\
\\
\\
\end{array}$\hspace*{\fill}%
\end{minipage}%
\begin{minipage}[c]{0.35\columnwidth}%
\emph{\uline{4-frame}}

\includegraphics[scale=0.35]{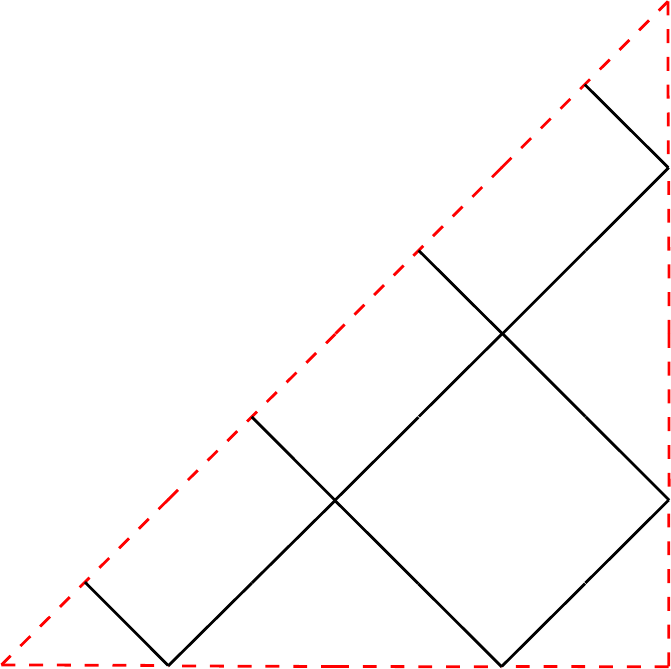}$\begin{array}{c}
\\
\\
\\
\\
\end{array}$%
\end{minipage}

\protect\caption{The first five k-frames indicated by{\small{} black solid lines.}\label{fig:four_unfoldings-1}}
\end{figure}

\begin{prop}
\label{prop:k_frame_vanishing} Let $k\in\mathbb{N}_{0}\,\textrm{and }\oval\in\ospec{\trngle}$
then any eigenfunction corresponding to $2^{k}\oval$ vanishes on
the k-frame.\end{prop}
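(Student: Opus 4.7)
My plan is to proceed by induction on $k$, using Lemma \ref{lem:eigenfunction_symmetry} to control the parity of eigenfunctions under the reflection $R$ and Lemma \ref{lem:fold_unfold_efunc} to convert an eigenfunction of $2^k \oval$ into one of $2^{k-1}\oval$ via folding. The geometric ingredient I would rely on is that the definition \eqref{eq:k_frame} unwinds to $S^{(k)} = U(S^{(k-1)})$ in the set-theoretic sense of \eqref{eq:unfolding_of_a_set}, so that $S^{(k)}$ decomposes into the image of $S^{(k-1)}$ under the coordinate unfolding $U$ together with the $R$-reflection of that image.

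For the base case $k=0$, I would simply invoke Lemma \ref{lem:eigenfunction_symmetry}: any eigenfunction of $\oval \in \ospec{\trngle}$ satisfies $\varphi \circ R = -\varphi$. Since $R(x,y)=(\pi-y,\pi-x)$ fixes each point of $\line$ pointwise (the condition $x+y=\pi$ forces $\pi-y=x$), this immediately gives $\varphi\equiv 0$ on $S^{(0)} = \line$.

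For the inductive step, I would assume the conclusion for $k-1$ and take $\varphi$ to be an eigenfunction of $2^k\oval$ with $k\geq 1$. Since $2^k\oval$ is even, Lemma \ref{lem:eigenfunction_symmetry} gives $\varphi\circ R=\varphi$, and Lemma \ref{lem:fold_unfold_efunc} guarantees that $\fold\varphi = \varphi\circ U$ is an eigenfunction of $2^{k-1}\oval$. By the inductive hypothesis $\fold\varphi$ vanishes on $S^{(k-1)}$, so $\varphi$ vanishes on the image $\{U(p) : p\in S^{(k-1)}\}$, which is exactly the $\hftrngle$-portion of $S^{(k)}$. The remaining portion of $S^{(k)}$ in $\trngle\setminus\hftrngle$ is the $R$-image of that set, and the evenness of $\varphi$ under $R$ carries the vanishing across, completing the step.

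The place I expect to require most care is making the set identity between $S^{(k)}$ and the union of $\{U(p) : p\in S^{(k-1)}\}$ with its $R$-reflection entirely precise, since this is where one must reconcile the set-level unfolding of \eqref{eq:unfolding_of_a_set} (which bakes in an $R$-symmetrization across $\line$) with the purely pointwise coordinate map $U$ of Definition \ref{def:fold_unfold_coords}. Once that matching is in place, the induction runs essentially by bookkeeping: the parity of $2^k\oval$ dictates whether $\varphi$ is even or odd under $R$, and each application of $\fold$ peels off one factor of $2$, reducing the problem back to the odd base case.
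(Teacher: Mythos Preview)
Your proposal is correct and follows essentially the same approach as the paper's proof: induction on $k$, with the base case handled by the antisymmetry from Lemma~\ref{lem:eigenfunction_symmetry} and the inductive step by folding via Lemma~\ref{lem:fold_unfold_efunc} together with the evenness of $\varphi$ under $R$. The only cosmetic difference is that the paper argues pointwise---picking $(x,y)\in S^{(k)}$ and splitting into the two cases $F(x,y)\in S^{(k-1)}$ or $F\circ R(x,y)\in S^{(k-1)}$ coming directly from \eqref{eq:unfolding_of_a_set}---whereas you phrase the same dichotomy as the set decomposition $S^{(k)}=\{U(p):p\in S^{(k-1)}\}\cup R(\{U(p):p\in S^{(k-1)}\})$; these are the same argument, and your caveat about matching the set-level unfolding with the pointwise map $U$ is exactly the bookkeeping the paper performs implicitly.
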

\begin{proof}
Let $\oval\in\ospec{\trngle}$, we shall prove the claim for $2^{k}\oval$
by induction on $k$. For $k=0$, by Lemma \ref{lem:eigenfunction_symmetry}
we get that any eigenfunction $\varphi$ of $\oval$ is anti-symmetric
with respect to $\line$ and therefore
\[
\left.\varphi\right|_{_{S^{\left(0\right)}}}=0.
\]
Next assume that the claim holds for $k-1$, and let $\varphi$ be
an eigenfunction corresponding to $2^{k}\oval$. By Lemma \ref{lem:fold_unfold_efunc}
we have that $F\varphi$ is an eigenfunction corresponding to the
eigenvalue $2^{k-1}\oval$ and hence
\[
\left.\fold\varphi\right|_{_{S^{\left(k-1\right)}}}=0.
\]
Now let $\left(x,y\right)\in S^{(k)}$. Since $S^{(k)}=U\left(S^{\left(k-1\right)}\right)$
we get that $F\left(x,y\right)\in S^{(k-1)}$ or \\
$F\circ R\left(x,y\right)\in S^{(k-1)}$ (see \eqref{eq:unfolding_of_a_set}).
If $F\left(x,y\right)\in S^{(k-1)}$ then 
\[
\varphi\left(x,y\right)=\varphi\circ U\circ F\left(x,y\right)=\fold\varphi\left(F\left(x,y\right)\right)=0.
\]
If $F\circ R\left(x,y\right)\in S^{(k-1)}$ we note that $\varphi$
is symmetric as $2^{k}\oval$ is even and hence
\[
\varphi\left(x,y\right)=\varphi\left(R\left(x,y\right)\right)=\varphi\circ U\circ F\left(R\left(x,y\right)\right)=\fold\varphi\left(F\circ R\left(x,y\right)\right)=0.
\]
Therefore, $\left.\varphi\right|_{_{S^{\left(k\right)}}}=0.$
\end{proof}

\section{Proof of Theorem \ref{thm:CourantSharp_triangle} \label{sec:proof_of_triangle}}

Using the definitions of Section \ref{sec:folding_structure_triangle},
the set of Courant-sharp eigenvalues according to Theorem \ref{thm:CourantSharp_triangle}
can be written as

\textbf{
\begin{equation}
\mathcal{C}=\{0\}\cup\left\{ \lambda_{U_{Q}^{k}\left(1,0\right)}\right\} _{k=0}^{3}.\label{eq:CS_eigenvalues}
\end{equation}
}

We divide the remaining eigenvalues, $\sigma\left(\mathcal{D}\right)\backslash\mathcal{C}$,
into three subsets and rule out their Courant-sharpness by the following
proposition.
\begin{prop}
\label{prop:rulling_all_courant_sharp_traingle}The eigenvalues of
each of the following sets are not Courant-sharp.
\begin{enumerate}
\item \label{enu:prop_zeroth_unfoldings_not_CS} 
\[
\Lambda^{(1)}:=\underset{\begin{array}{c}
\left(m,n\right)\in\O\backslash\left\{ \left(1,0\right)\right\} \end{array}}{\bigcup}\left\{ \lambda_{m,n}\right\} 
\]
 
\item \label{enu:prop_almost all high unfoldings are non courant sharp}
\[
\Lambda^{(2)}:=\bigsqcup_{k=1}^{\infty}\left(\underset{(m,n)\in\O:\,n\neq0}{\bigcup}\left\{ \lambda_{U_{\Q}^{k}\left(m,n\right)}\right\} \right)
\]

\item \label{enu:prop_The-eigenvalues-of U(m,0) not courant sharp} 
\[
\Lambda^{(3)}:=\bigsqcup_{k=1}^{\infty}\left(\underset{(m,0)\in\O\backslash\left\{ \left(1,0\right)\right\} }{\bigcup}\left\{ \lambda_{U_{\Q}^{k}\left(m,0\right)}\right\} \right)\bigcup\left\{ \lambda_{U_{\Q}^{k}\left(1,0\right)}\right\} _{k=4}^{\infty}
\]

\end{enumerate}
\end{prop}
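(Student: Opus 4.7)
The three parts are handled separately but share a common strategy: producing an upper bound on $\nu(\varphi)$ sharper than the naive Courant bound, by exploiting Lemma~\ref{lem:eigenfunction_symmetry} and Proposition~\ref{prop:k_frame_vanishing}, and then comparing against the value of $N(\lambda)$ read off from the hierarchical decomposition~\eqref{eq:hirarchical_structure_of_spectrum}.

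For part (1), every $\lambda\in\Lambda^{(1)}$ is odd, so by Lemma~\ref{lem:eigenfunction_symmetry} any eigenfunction $\varphi$ is anti-symmetric across $\line$. Hence $\line\subset\varphi^{-1}(0)$ and the nodal domains pair up under $R$, giving $\nu(\varphi)=2\nu_{1/2}(\varphi)$, which is in particular always even. The Courant-sharp equality $\nu(\varphi)=N(\lambda)$ is therefore immediately impossible whenever $N(\lambda)$ is odd, and I would verify this for the bulk of pairs $(m,n)\in\O\setminus\{(1,0)\}$ by the direct lattice-point formula $N(\lambda)=|\{(m',n')\in\Q:m'^{2}+n'^{2}\leq\lambda\}|$ combined with a parity analysis based on sums-of-two-squares representations. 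For the (finitely many) residual cases in which $N(\lambda)$ is even (for instance $\lambda_{6,3}=45$), I would bound $\nu_{1/2}(\varphi)$ from above by pulling back $\varphi|_{\hftrngle}$ under $F$ to an eigenfunction of a mixed Dirichlet/Neumann problem on $\trngle$ (Dirichlet on $\{x=\pi\}$) with eigenvalue $\lambda/2$, applying Courant's theorem in that problem, and comparing the resulting count against $N(\lambda)/2$.

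For parts (2) and (3), $\lambda=2^{k}\oval$ with $k\geq 1$, so by Proposition~\ref{prop:k_frame_vanishing} every eigenfunction vanishes on $S^{(k)}$. I would first establish, by induction on $k$ via the recursion $S^{(k+1)}=U(S^{(k)})$, precisely how $S^{(k)}$ partitions $\trngle$; one then observes that $\varphi$ restricted to any such piece pulls back, under an appropriate iterate of the folding transformation, to the base eigenfunction $\psi=\fold^{k}\varphi$ of eigenvalue $\oval$. This yields a sharpened upper bound on $\nu(\varphi)$ in terms of $\nu(\psi)$, the combinatorial structure of $\trngle\setminus S^{(k)}$, and the sign pattern of $\psi$ along the image of $\line$ under $F$ (the edge $\{x=\pi\}\subset\partial\trngle$). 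Comparison with the exact value of $N(2^{k}\oval)$ from~\eqref{eq:hirarchical_structure_of_spectrum} then delivers the strict inequality $\nu(\varphi)<N(2^{k}\oval)$. The distinction between the two parts reflects the additional simplicity of the axial eigenfunctions $\varphi_{m,0}(x,y)=\cos(mx)+\cos(my)$: for the specific base $(m,n)=(1,0)$, the first three unfoldings give exactly $\lambda=2,4,8\in\mathcal{C}$, explaining why part (3) restricts to $k\geq 4$ for that base.

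The principal obstacle I expect is the precise accounting of how nodal domains merge across $\line$ during unfolding. The naive bound $\nu(\unfold\psi)\leq 2\nu(\psi)$ is not sharp: two $R$-symmetric nodal domains adjacent to $\line$ merge into a single domain of $\unfold\psi$ precisely when $\psi$ has constant sign on the corresponding portion of $\{x=\pi\}$. Tracking this sign pattern through $k$ iterated unfoldings, and doing so uniformly over all eigenfunctions in each (possibly multi-dimensional) eigenspace of $2^{k}\oval$, is where I expect the bulk of the technical work to lie.
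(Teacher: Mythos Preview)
Your proposal contains genuine gaps in all three parts, and in parts (2)--(3) misses the key idea that the paper actually uses.

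\textbf{Part (1).} The parity shortcut does not work: there is no reason that $N(\lambda_{m,n})$ is odd for all but finitely many $(m,n)\in\O$, and you give no argument for it. The parity of $N(\lambda)=|\Q(\lambda)|+1$ is governed by the number of lattice points in an octant of a disk, which fluctuates without the regularity you assume. What you describe as a ``fallback'' --- applying Courant's theorem to $\varphi|_{\hftrngle}$ viewed as an eigenfunction of the mixed Dirichlet/Neumann problem --- is in fact the paper's \emph{primary} argument, used uniformly. The paper identifies the spectral counting function of that mixed problem with $|\O(\lambda)|$, then uses the bijection $(p,q)\mapsto(p-1,q)$ between $\O(\lambda)$ and $\E(\lambda)\setminus\bigl(\underrightarrow{\partial}\Q(\lambda)\cap\E\bigr)$ to get $\nu(\varphi)\le N(\lambda)+1-|\underrightarrow{\partial}\Q(\lambda)\cap\E|$, and finishes by exhibiting two explicit points of $\underrightarrow{\partial}\Q(\lambda)\cap\E$.

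\textbf{Parts (2) and (3).} The obstacle you flag --- controlling how nodal domains merge across $\line$ under iterated unfolding, uniformly over a possibly multi-dimensional eigenspace --- is real, and you have no mechanism for it. The paper sidesteps this entirely. Its key lemma (Lemma~\ref{lem:CS_implies_simplicity}) shows, via the variational comparison $\sum_i \nup_i^{(k)}\le\nup$ together with the multiplicity inequality $\mult(\lambda)\le\mult_i^{(k)}(\lambda)$ (which comes from unique continuation), that a Courant-sharp $2^k\oval$ must be \emph{simple} in $\spec{\trngle}$ and in each $\spec{\trngle_i^{(k)}}$. Part~(2) is then dispatched by observing that for $(m,n)\in\O$ with $n\neq 0$, the eigenvalue $\lambda_{U_\Q^k(m,n)}$ is non-simple in the spectrum of a specific square or rectangular subdomain of the $k$-frame partition: one simply writes down two distinct sine products with the same eigenvalue. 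No nodal-domain tracking is needed.

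For part~(3), simplicity in $\spec{\trngle}$ reduces the problem to the single basis eigenfunctions $\varphi_{m,m}$ and $\varphi_{2m,0}$ (the set $\Lambda^{(3)}$ rewrites as $\{\lambda_{m,m}\}_{m\ge 3}\cup\{\lambda_{2m,0}\}_{m\ge 3}$). These have completely explicit product-type nodal sets, so $\nu$ is computed exactly, and one exhibits a lattice subset of $\Q(\lambda)\cup\{(m,n)\}$ of size $\nu$ together with one extra lattice point outside it. Your proposed inductive bound $\nu(\unfold\psi)\le 2\nu(\psi)$ with merge corrections is neither needed nor, as you yourself note, easy to make precise.
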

The proofs of the three parts of the proposition are essentially different
and each appears in a designated subsection.

\subsection{Proving Proposition \ref{prop:rulling_all_courant_sharp_traingle},\eqref{enu:prop_zeroth_unfoldings_not_CS}.}

We start by providing some additional constructions, needed for the
proof.
\begin{defn}
We define the following subsets of the lattice $\Q$.\end{defn}
\begin{enumerate}
\item For $0\leq\lambda\in\mathbb{R}$ we define
\begin{equation}
\begin{array}{c}
\mathcal{O}\left(\lambda\right)=\Q\left(\lambda\right)\cap\O,\\
\\
\mathcal{E}\left(\lambda\right)=\Q\left(\lambda\right)\cap\E.
\end{array}\label{eq:odd_even_smaller_then_lambda}
\end{equation}

\item Let $A\subseteq\Q$, define $\underrightarrow{\partial}A$ to be the
set of points in $A$ such that their right neighbor is outside $A$,
i.e:
\begin{equation}
\underrightarrow{\partial}A=\left\{ (m,n)\in A\,|\,(m+1,n)\notin A\right\} .\label{eq:right_boundary_points}
\end{equation}

\end{enumerate}
\textbf{}
\begin{figure}[H]
\includegraphics[scale=0.7]{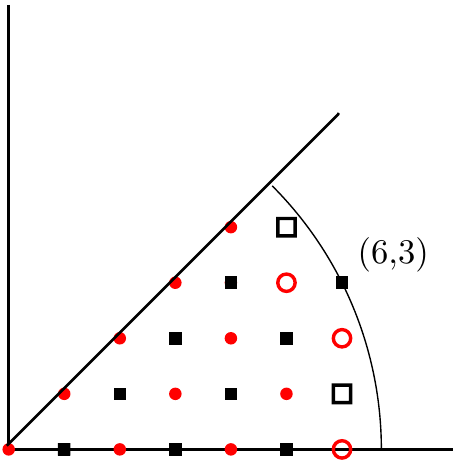}

\textbf{\protect\caption{\textbf{\label{fig:lattice points and boundary points}}{\small{}The
red disks correspond to $\mathcal{E}\left(\lambda_{6,3}\right)$ and
the black squares correspond to $\protect\O\left(\lambda_{6,3}\right)$.
The empty disks and squares correspond to $\protect\underrightarrow{\partial}\protect\Q\left(\lambda_{6,3}\right)$.} }
}
\end{figure}

We consider the following auxiliary eigenvalue problem on $\hftrngle$
with mixed Dirichlet-Neumann boundary conditions 
\begin{equation}
-\Delta\varphi=\lambda\varphi,\qquad\varphi\Big\vert_{\line}=0\,,\,\,\frac{\partial\varphi}{\partial\vec{n}}\Big\vert_{\partial\left(\hftrngle\right)\backslash L}=0.\label{eq:dnn_problem-1}
\end{equation}

Denote the corresponding spectrum by $\spec{\hftrngle}$ and the spectral
counting functions of \eqref{eq:dnn_problem-1} by $\underline{\tilde{N}}\left(\lambda\right),\,\,\tilde{N}\left(\lambda\right)$
as in \eqref{eq:lower_counting}-\eqref{eq:counting}.
\begin{lem}
Let $0\leq\lambda\in\R$ then 
\begin{equation}
\underline{\tilde{N}}\left(\lambda\right)=\left|\mathcal{O}\left(\lambda\right)\right|.\label{eq:number of odd_qn equal to the dnn counting}
\end{equation}
\end{lem}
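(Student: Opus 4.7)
My plan is to identify the spectrum of the mixed Dirichlet-Neumann problem on $\hftrngle$ with the odd part of the Neumann spectrum on $\trngle$ via a reflection/restriction bijection. The key geometric fact is that $\partial(\hftrngle)$ decomposes into the segment $\line$ (inside $\trngle$) and two segments lying on $\partial\trngle$; the involution $R$ sends $\hftrngle$ to the complementary half $\trngle\setminus\hftrngle$ and fixes $\line$ pointwise. This makes $\line$ precisely the correct interface for an odd reflection.

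First, I would show that restriction defines a map from the space of Neumann eigenfunctions on $\trngle$ with eigenvalue $\mu$ that are \emph{odd} with respect to $\line$ into the eigenspace of \eqref{eq:dnn_problem-1} for the same eigenvalue $\mu$. This is immediate since oddness forces $\varphi|_{\line}=0$ (the Dirichlet condition), while the Neumann condition on $\partial(\hftrngle)\setminus\line$ is inherited from the Neumann condition on $\partial\trngle$. Conversely, given a mixed DN eigenfunction $\psi$ on $\hftrngle$, its odd extension
\[
\tilde\psi(x,y)=\begin{cases}\psi(x,y) & (x,y)\in\hftrngle,\\ -\psi(R(x,y)) & (x,y)\in\trngle\setminus\hftrngle,\end{cases}
\]
is a Neumann eigenfunction on $\trngle$ with the same eigenvalue: the Dirichlet condition on $\line$ ensures $C^0$-matching, and since $R$ is an isometry across $\line$ whose normal flips sign, the odd extension also matches the normal derivative (both sides equal zero on $\line$ after cancellation, so the extension is a weak eigenfunction, hence smooth by elliptic regularity). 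These two maps are mutually inverse, yielding an isomorphism of eigenspaces and hence an equality of eigenvalue counting functions with multiplicities.

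Next, I would combine this with Lemma \ref{lem:eigenfunction_symmetry}: the eigenspace of $\mu\in\spec{\trngle}$ is spanned by $\{\varphi_{m,n}:m^2+n^2=\mu,\,(m,n)\in\Q\}$, and these basis elements are all odd (when $\mu$ is odd) or all even (when $\mu$ is even). In particular the subspace of odd Neumann eigenfunctions with eigenvalue $<\lambda$ has dimension
\[
\sum_{\substack{\mu\in\ospec{\trngle}\\\mu<\lambda}}\mult(\mu)=\bigl|\{(m,n)\in\O:\lambda_{m,n}<\lambda\}\bigr|=|\mcl O(\lambda)|.
\]
By the bijection above, this dimension also equals $\underline{\tilde N}(\lambda)$, giving \eqref{eq:number of odd_qn equal to the dnn counting}.

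The only genuinely delicate point is verifying that the odd extension $\tilde\psi$ really lands in the Neumann eigenspace (as opposed to merely being a distributional eigenfunction with a possible singularity on $\line$). This is standard for the reflection principle across a Dirichlet boundary of a smooth interface, but it is the step I would write most carefully, invoking that $\psi\in H^1(\hftrngle)$ with zero trace on $\line$ extends to an $H^1$ function on $\trngle$ whose Rayleigh quotient matches, so that by elliptic regularity $\tilde\psi$ is a classical Neumann eigenfunction on all of $\trngle$.
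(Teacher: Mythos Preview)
Your proposal is correct and follows essentially the same route as the paper: both arguments rest on the restriction/odd-extension bijection between odd Neumann eigenfunctions on $\trngle$ and eigenfunctions of the mixed problem on $\hftrngle$, combined with Lemma~\ref{lem:eigenfunction_symmetry} to identify the odd Neumann dimension with $|\mathcal{O}(\lambda)|$. The paper phrases this as a bijection between quantum numbers and the multiset of mixed eigenvalues (checking surjectivity via odd extension and ``injectivity'' via linear independence of restrictions), whereas you phrase it more cleanly as an isomorphism of eigenspaces; your explicit attention to the $H^1$/elliptic-regularity point is what the paper handles by a citation to the reflection principle.
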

\begin{proof}
Note that if $\lambda_{p,q}\in\ospec{\trngle}$ and $\tilde{\varphi}$
is any of its eigenfunctions, then by Lemma \ref{lem:eigenfunction_symmetry}
it follows that $\tilde{\varphi}\at_{\hftrngle}$ is an eigenfunction
of \eqref{eq:dnn_problem-1} which means that $\lambda_{p,q}\in\spec{\hftrngle}$.
This motivates to consider the mapping 
\[
\varPhi:\mathcal{O}\left(\lambda\right)\rightarrow\left\{ \left.\tilde{\lambda}\in\spec{\hftrngle}\right|\,\tilde{\lambda}<\lambda\right\} 
\]
\[
\varPhi:\left(p,q\right)\longmapsto\lambda_{p,q},
\]
where we note that the set $\left\{ \left.\tilde{\lambda}\in\spec{\hftrngle}\right|\,\tilde{\lambda}<\lambda\right\} $
contains each eigenvalue as many times as its multiplicity in $\spec{\hftrngle}$.
Showing that $\varPhi$ is a bijection proves the lemma. To show that
$\varPhi$ is onto, let $\tilde{\lambda}\in\left\{ \left.\tilde{\lambda}\in\spec{\hftrngle}\right|\,\tilde{\lambda}<\lambda\right\} $
and extend one of its corresponding eigenfunctions $\tilde{\varphi}$
anti-symmetrically along $\line$, i.e. consider 
\[
\varphi\left(x,y\right)=\begin{cases}
\tilde{\varphi}\left(x,y\right) & \left(x,y\right)\in\hftrngle\\
-\tilde{\varphi}\circ R\left(x,y\right) & \left(x,y\right)\in\mathcal{D}\backslash\hftrngle
\end{cases}.
\]
By the reflection principle (see \cite{herbrich_arX14} for example),
$\varphi$ is an odd eigenfunction of \eqref{eq:eigen_prob}. By Lemma
\ref{lem:eigenfunction_symmetry} we deduce $\tilde{\lambda}\in\ospec{\trngle}$,
and so there exists $\left(p,q\right)\in\O\left(\lambda\right)$ such
that $\lambda_{p,q}=\tilde{\lambda}.$ To show that $\varPhi$ is
an injection, take $\left(p_{1},q_{1}\right)\neq\left(p_{2},q_{2}\right)$
and observe that the eigenfunctions $\varphi_{p_{1},q_{1}}$, $\varphi_{p_{2},q_{2}}$
are linearly independent and anti-symmetric and hence $\varphi_{p_{1},q_{1}}\at_{\hftrngle},\varphi_{p_{2},q_{2}}\at_{\hftrngle}$
are linearly independent and are eigenfunctions of \eqref{eq:dnn_problem-1},
which means that $\varPhi\left(p_{1},q_{1}\right)\neq\varPhi\left(p_{2},q_{2}\right)$.
\end{proof}
We are now able to prove Proposition \ref{prop:rulling_all_courant_sharp_traingle},\eqref{enu:prop_zeroth_unfoldings_not_CS}.
\begin{proof}[Proof of Proposition \ref{prop:rulling_all_courant_sharp_traingle},\eqref{enu:prop_zeroth_unfoldings_not_CS}.]
Let $\lambda_{m,n}$ be such that $\left(m,n\right)\in\O\backslash\left\{ \left(1,0\right)\right\} $
and $\varphi$ be any eigenfunction corresponding to $\lambda_{m,n}$,
Lemma \ref{lem:eigenfunction_symmetry} gives
\begin{equation}
\nu\left(\varphi\right)=2\cdot\nu\left(\left.\varphi\right|_{_{\hftrngle}}\right).\label{eq:nodal_domains_antisymm}
\end{equation}
Since $\left.\varphi\right|_{_{\hftrngle}}$ is an eigenfunction of
\eqref{eq:dnn_problem-1} with an eigenvalue $\lambda_{m,n}$, we
get by Courant's nodal theorem \cite{Cou_ngwgmp23} that
\begin{equation}
\nu\left(\left.\varphi\right|_{_{\hftrngle}}\right)\leq\tilde{N}\left(\lambda_{m,n}\right),\label{eq:DNN_eigenvals_counting}
\end{equation}
so that
\[
\nu\left(\varphi\right)\leq2\cdot\tilde{N}\left(\lambda_{m,n}\right)
\]
 and by \eqref{eq:number of odd_qn equal to the dnn counting} we
obtain
\begin{equation}
\nu\left(\varphi\right)\leq2\cdot\left(\O\left(\lambda_{m,n}\right)+1\right).\label{eq:dnn courant bound on trngle nodal domains}
\end{equation}
 Next, observe that the following mapping
\begin{equation}
\begin{array}{c}
B:\,\,\mathcal{O}\left(\lambda_{m,n}\right)\rightarrow\mathcal{E}\left(\lambda_{m,n}\right)\backslash\left(\underrightarrow{\partial}\Q\left(\lambda_{m,n}\right)\cap\mathcal{E}\right)\\
\\
B(p,q)=(p-1,q),
\end{array}\label{eq:boundary_map}
\end{equation}
is a bijection (see Figure \ref{fig:lattice points and boundary points})
and thus we obtain
\begin{equation}
\left|\mathcal{O}\left(\lambda_{m,n}\right)\right|=\left|\mathcal{E}\left(\lambda_{m,n}\right)\right|-\left|\underrightarrow{\partial}\Q\left(\lambda_{m,n}\right)\cap\mathcal{E}\right|.\label{eq:even_bigger_then_odd}
\end{equation}
We now have,
\begin{equation}
\nu\left(\varphi\right)\underbrace{\leq}_{\textrm{ \eqref{eq:dnn courant bound on trngle nodal domains} }}2\left(\left|\mathcal{O}\left(\lambda_{m,n}\right)\right|+1\right)\underbrace{=}_{\eqref{eq:even_bigger_then_odd}}\left|\mathcal{O}\left(\lambda_{m,n}\right)\right|+\left|\mathcal{E}\left(\lambda_{m,n}\right)\right|+1+\left(1-\left|\underrightarrow{\partial}\Q\left(\lambda_{m,n}\right)\cap\mathcal{E}\right|\right).\label{eq:inequality of nodal domains of an odd eigenfunction by lattice}
\end{equation}
Note that 
\[
\Q\left(\lambda_{m,n}\right)=\mathcal{E}\left(\lambda_{m,n}\right)\bigsqcup\mathcal{O}\left(\lambda_{m,n}\right),
\]
 hence 
\begin{equation}
\underline{N}\left(\lambda_{m,n}\right)=\left|\mathcal{O}\left(\lambda_{m,n}\right)\right|+\left|\mathcal{E}\left(\lambda_{m,n}\right)\right|.\label{eq:odd_plus_even}
\end{equation}
Combining \eqref{eq:inequality of nodal domains of an odd eigenfunction by lattice}
with \eqref{eq:odd_plus_even} we get
\begin{equation}
\nu\left(\varphi\right)\leq N\left(\lambda_{m,n}\right)+\left(1-\left|\underrightarrow{\partial}\Q\left(\lambda_{m,n}\right)\cap\mathcal{E}\right|\right).\label{eq:nodal deficiecy of odd eigenvalues}
\end{equation}
Therefore in order to rule out the Courant-sharpness of $\lambda_{m,n}$
we only require that 
\begin{equation}
\left|\underrightarrow{\partial}\Q\left(\lambda_{m,n}\right)\cap\mathcal{E}\right|>1.\label{eq:boundary_has_at_least_two_points}
\end{equation}
Indeed, since $\left(m,n\right)\in\O$ we get $\left(m-1,n\right)\in\underrightarrow{\partial}\Q\left(\lambda_{m,n}\right)\cap\mathcal{E}$
and we are left with finding one more point $\left(p,q\right)\in\underrightarrow{\partial}\Q\left(\lambda_{m,n}\right)\cap\mathcal{E}$.
As we consider $\left(m,n\right)\in\O\backslash\left\{ \left(1,0\right)\right\} $,
a simple calculation shows that if $n\geq1$ then $\left(m,n-1\right)\in\underrightarrow{\partial}\Q\left(\lambda_{m,n}\right)\cap\mathcal{E}$
and if $n=0$ we have $\left(m-1,2\right)\in\underrightarrow{\partial}\Q\left(\lambda_{m,0}\right)\cap\mathcal{E}$.\end{proof}
\begin{rem*}
It is easy to see that the last argument does not work for $\left(m,n\right)=\left(1,0\right)$.
Indeed we show later that this is a Courant-sharp eigenvalue (Lemma
\ref{lem:cournat sharpness of the eigenvals}).
\end{rem*}

\subsection{Proving Proposition \ref{prop:rulling_all_courant_sharp_traingle},\eqref{enu:prop_almost all high unfoldings are non courant sharp}.}

The $k$-frame structure divides the triangle into $k$-dependent
number of subdomains. This is defined below and is used in the proofs
of the current subsection.
\begin{defn}
\label{def:k_frame_partition} Define the $k$-frame partition as
\[
\P^{\left(k\right)}\coloneqq\trngle^{\circ}\backslash S^{(k)}=\bigsqcup_{i=1}^{M(k)}\trngle_{i}^{(k)},
\]
where $\left\{ \trngle_{i}^{\left(k\right)}\right\} _{i=1}^{M\left(k\right)}$denote
the subdomains of this partition and $M\left(k\right)$ is their number. 

\noindent Consider the following eigenvalue problems with the boundary
conditions induced by the $k$-frame
\begin{equation}
-\Delta\varphi=\lambda\varphi,\qquad\varphi\Big\vert_{S^{(k)}\bigcap\partial\trngle_{i}^{(k)}}=0\,,\,~~~~~~~\quad\frac{\partial\varphi}{\partial\vec{n}}\Big\vert_{\partial\trngle\bigcap\partial\trngle_{i}^{(k)}}=0.\label{eq:subdomains eigenvalue problem}
\end{equation}
We denote the corresponding spectra by $\spec{\trngle_{i}^{(k)}}$
and define the corresponding spectral counting functions, and multiplicities
\[
\nup_{i}^{\left(k\right)}\left(\lambda\right)\,,\,\,\underline{N}_{i}^{\left(k\right)}\left(\lambda\right),\,\,N_{i}^{\left(k\right)}\left(\lambda\right),\,\,\mult_{i}^{(k)}\left(\lambda\right),
\]
 as in \eqref{eq:upper_counting}-\eqref{eq:multiplicity}.
\end{defn}
Next we bring two lemmata, the second of which provides necessary
conditions for an eigenvalue to be Courant-sharp.
\begin{lem}
\label{lem:degeneracy of subdomain} Let $k\in\mathbb{N}_{0}$ and
$\oval\in\ospec{\mathcal{D}}$. We have that
\begin{equation}
\mult\left(2^{k}\oval\right)\leq\mult_{i}^{(k)}\left(2^{k}\oval\right)\ \ i\in\left\{ 1,..,M\left(k\right)\right\} .\label{eq:degeneracy}
\end{equation}
\end{lem}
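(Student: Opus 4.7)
The plan is to construct, for each $i \in \{1,\ldots,M(k)\}$, an injective linear map $\Psi_i$ from the eigenspace of the eigenvalue $2^k\oval$ for the Neumann Laplacian on $\trngle$ into the eigenspace of the same eigenvalue for the mixed problem \eqref{eq:subdomains eigenvalue problem} on $\trngle_i^{\left(k\right)}$. The dimension inequality for finite-dimensional spaces then gives exactly $\mult\left(2^k\oval\right)\leq\mult_i^{\left(k\right)}\left(2^k\oval\right)$.

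The natural candidate is the restriction $\Psi_i(\varphi):=\varphi|_{\trngle_i^{\left(k\right)}}$. The first task is to verify that $\Psi_i(\varphi)$ satisfies \eqref{eq:subdomains eigenvalue problem}: the eigenvalue equation is inherited under restriction; the Neumann condition on $\partial\trngle\cap\partial\trngle_i^{\left(k\right)}$ restricts directly from the Neumann condition on $\partial\trngle$; and the Dirichlet condition on $S^{(k)}\cap\partial\trngle_i^{\left(k\right)}$ is exactly Proposition \ref{prop:k_frame_vanishing}, which states that any eigenfunction corresponding to $2^k\oval$ vanishes on the $k$-frame $S^{(k)}$.

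The main step is to prove injectivity. Given $\varphi$ in the eigenspace of $2^k\oval$ on $\trngle$ with $\varphi|_{\trngle_i^{\left(k\right)}}\equiv 0$, I would conclude $\varphi\equiv 0$ on $\trngle$. Since $\varphi$ is a smooth solution of the elliptic eigenvalue equation on the connected open set $\trngle^\circ$ and vanishes identically on the nonempty open subset $\trngle_i^{\left(k\right)}$, the Aronszajn unique continuation principle for Laplace eigenfunctions gives $\varphi\equiv 0$. Alternatively, and more in the spirit of Section \ref{sec:folding_structure_triangle}, one can argue as follows: for $k\geq 1$, Lemma \ref{lem:eigenfunction_symmetry} makes $\varphi$ even with respect to $\line$, iterating the folding structure shows that $\fold^j\varphi$ is again even at each subsequent scale, and reflection across the corresponding symmetry axes then propagates the vanishing of $\varphi$ from $\trngle_i^{\left(k\right)}$ to every other subdomain of the $k$-frame partition; for $k=0$ a single odd reflection across $\line$ suffices. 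Once injectivity is in hand, $\Psi_i$ is an injective linear map between finite-dimensional eigenspaces and the multiplicity bound follows. The principal obstacle, should one wish to avoid invoking unique continuation, is to formalize this reflection argument by carefully tracking the composition of reflections relating a general subdomain $\trngle_i^{\left(k\right)}$ to the distinguished piece $U^k(\trngle)$.
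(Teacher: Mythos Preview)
Your proposal is correct and follows essentially the same approach as the paper: the paper also restricts a basis of the $2^{k}\oval$-eigenspace to $\trngle_i^{(k)}$, uses Proposition~\ref{prop:k_frame_vanishing} to check that the restrictions satisfy the mixed problem \eqref{eq:subdomains eigenvalue problem}, and then invokes Aronszajn's unique continuation principle to obtain linear independence (equivalently, injectivity of your $\Psi_i$). Your alternative reflection argument is not pursued in the paper, and as you yourself note it would require additional bookkeeping; the unique continuation route is the cleaner choice and is exactly what the paper does.
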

\begin{proof}
Let $\lambda=2^{k}\oval$and let 
\[
\B=\left\{ \ef_{1},..,\ef_{\mult\left(\lambda\right)}\right\} 
\]
 be a basis for the eigenspace of $\lambda$. Let $i\in\left\{ 1,..,M\left(k\right)\right\} $,
by Proposition \ref{prop:k_frame_vanishing} we have that $\B'=\left\{ \left.\ef_{1}\right|_{\trngle_{i}^{(k)}},..,\left.\ef_{\mult\left(\lambda\right)}\right|_{\trngle_{i}^{(k)}}\right\} $
are eigenfunctions of \eqref{eq:subdomains eigenvalue problem} on
the domain $\trngle_{i}^{(k)}$. Assume by contradiction that the
set $\B'$ turns out to be linearly dependent, then we have scalars
$\alpha_{l}\in\mathbb{R}$ not all zero such that
\[
\sum_{l}\left.\alpha_{l}\ef_{l}\right|_{\trngle_{i}^{(k)}}\equiv0.
\]
But then the eigenfunction $\sum_{l}\alpha_{l}\ef_{l}$ of \eqref{eq:eigen_prob}
vanishes on the open subset $\trngle_{i}^{(k)}$, and by the unique
continuation property \cite{Aronszajn_jmpa57} we obtain that
\[
\sum_{l}\alpha_{l}\ef_{l}\equiv0,
\]
contradicting the linear independence of $\B$. Thus it follows that
the dimension of the eigenspace that corresponds to $\lambda\in\spec{\trngle_{i}^{(k)}}$
is at least $\mult\left(\lambda\right)$ .\end{proof}
\begin{rem*}
The strict inequality in \eqref{eq:degeneracy} may indeed occur.
This can be demonstrated by applying Corollary \ref{cor:eiganvalue_is_odd_times_power_of_two},(1)
and Lemma \ref{lem:high unfoldings degenerate eigenvalues} to some
simple eigenvalue $\lambda_{m,n}\in\ospec{\trngle}$ such that $n\neq0.$
\begin{lem}
\label{lem:CS_implies_simplicity} Let $k\in\mathbb{N}_{0}$ and $\oval\in\ospec{\mathcal{D}}$.
If $2^{k}\oval$ is a Courant-sharp eigenvalue of $\trngle$ then
both of the following hold\end{lem}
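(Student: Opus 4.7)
The plan is to combine the $k$-frame vanishing (Proposition \ref{prop:k_frame_vanishing}) with Courant's theorem applied separately on each piece of the partition $\P^{(k)}$, and then use Dirichlet--Neumann bracketing to force saturation in every intermediate inequality. Set $\lambda := 2^{k}\oval$ and let $\varphi$ be a Courant-sharp eigenfunction, i.e. $\nu(\varphi) = N(\lambda)$.

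First I would note that since $\left.\varphi\right|_{S^{(k)}} \equiv 0$, every nodal domain of $\varphi$ lies entirely inside one of the subdomains $\trngle_{i}^{(k)}$, so the restrictions $\varphi_i := \left.\varphi\right|_{\trngle_{i}^{(k)}}$ are eigenfunctions of the mixed problem \eqref{eq:subdomains eigenvalue problem} at eigenvalue $\lambda$, and the unique-continuation argument used in the proof of Lemma \ref{lem:degeneracy of subdomain} shows that none of them is identically zero. This immediately gives the clean decomposition
\[
N(\lambda) \;=\; \nu(\varphi) \;=\; \sum_{i=1}^{M(k)} \nu(\varphi_i) \;\leq\; \sum_{i=1}^{M(k)} N_i^{(k)}(\lambda),
\]
where the inequality is Courant's theorem applied on each piece.

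Next I would compare the two ends of this chain by Dirichlet--Neumann bracketing between the Neumann problem on $\trngle$ and the partition $\P^{(k)}$ with Dirichlet conditions along $S^{(k)}$ and Neumann conditions along $\partial\trngle$. Bracketing yields $\sum_i \underline{N}_i^{(k)}(\lambda) \leq \underline{N}(\lambda)$; splitting both sides of the displayed inequality via $N(\lambda) = \underline{N}(\lambda) + \mult(\lambda)$ and $N_i^{(k)}(\lambda) = \underline{N}_i^{(k)}(\lambda) + \mult_i^{(k)}(\lambda)$ one obtains
\[
\mult(\lambda) + \Big(\underline{N}(\lambda) - \sum_{i} \underline{N}_i^{(k)}(\lambda)\Big) \;\leq\; \sum_{i=1}^{M(k)} \mult_i^{(k)}(\lambda).
\]
Combining this with Lemma \ref{lem:degeneracy of subdomain}, which provides the pointwise bound $\mult(\lambda) \leq \mult_i^{(k)}(\lambda)$, every inequality in the chain must be saturated. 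Tracing the saturation gives the two conclusions expected for this lemma: $\lambda$ is \emph{simple} as an eigenvalue of the original Neumann problem on $\trngle$, and its restriction $\varphi_i$ is itself Courant-sharp on every $\trngle_i^{(k)}$ (with simple multiplicity there as well). The usefulness of the lemma downstream is that failure of either simplicity statement will disqualify $\lambda$ from being Courant-sharp.

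The main obstacle I anticipate is making the Dirichlet--Neumann bracketing rigorous in the presence of mixed boundary conditions on the $\trngle_i^{(k)}$ (Dirichlet on $S^{(k)} \cap \partial\trngle_i^{(k)}$, Neumann on $\partial\trngle \cap \partial\trngle_i^{(k)}$). Once the min-max characterization is set up correctly for this mixed problem, the remainder is careful accounting of the counting functions and extracting the simplicity statements from the forced equalities; the unique-continuation and $k$-frame ingredients are both already in place from the previous section.
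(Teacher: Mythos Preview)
Your approach is exactly the paper's: $k$-frame vanishing to decompose $\nu(\varphi)$, Courant on each piece, Dirichlet--Neumann bracketing (what the paper calls the variational principle, \eqref{eq:variational principle}), and then Lemma~\ref{lem:degeneracy of subdomain} to force simplicity. The obstacle you flag about mixed boundary conditions is not a real issue; the min--max comparison goes through verbatim, and the paper simply cites Courant--Hilbert.

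There is, however, a genuine accounting slip that breaks the argument as written. In this paper's notation $N(\lambda)=\underline{N}(\lambda)+1$ whenever $\lambda$ is an eigenvalue (see \eqref{eq:counting}), \emph{not} $\underline{N}(\lambda)+\mult(\lambda)$. With the correct identity, your chain $N(\lambda)\le\sum_i N_i^{(k)}(\lambda)$ together with the bracketing $\sum_i\underline{N}_i^{(k)}(\lambda)\le\underline{N}(\lambda)$ collapses to the triviality $1\le M(k)$, and nothing forces saturation; your displayed inequality involving the multiplicities simply does not follow. The paper instead routes the estimate through the \emph{upper} counting functions: writing $N_i^{(k)}=\overline{N}_i^{(k)}-(\mult_i^{(k)}-1)$ and using the bracketing in the form $\sum_i\overline{N}_i^{(k)}(\lambda)\le\overline{N}(\lambda)$ yields
\[
\sum_{i=1}^{M(k)}\bigl(\mult_i^{(k)}(\lambda)-1\bigr)\;\le\;\mult(\lambda)-1.
\]
Now Lemma~\ref{lem:degeneracy of subdomain} gives each summand $\ge\mult(\lambda)-1\ge0$, and since $M(k)\ge2$ this forces $\mult(\lambda)=1$ and then $\mult_i^{(k)}(\lambda)=1$ for every $i$. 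So your outline is right, but the bookkeeping must go through $\overline{N}$, not $\underline{N}$.
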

\begin{enumerate}
\item \emph{\label{enu:lem_CS_implies_simplicity_1}The eigenvalue $2^{k}\oval$
is a simple eigenvalue in $\spec{\trngle_{i}^{(k)}}$ for all $i\in\left\{ 1,..,M\left(k\right)\right\} $.}
\item \emph{\label{enu:lem_CS_implies_simplicity_2}The eigenvalue $2^{k}\oval$
is a simple eigenvalue in $\spec{\trngle}$.}
\end{enumerate}
\end{rem*}
\begin{proof}
Let $\varphi$ be a Courant-sharp eigenfunction of $2^{k}\oval\in\spec{\trngle}$,
then
\begin{equation}
N\left(2^{k}\oval\right)=\nu\left(\varphi\right).\label{eq:number of domains for courantsharp function}
\end{equation}
By Proposition \ref{prop:k_frame_vanishing} we have
\begin{equation}
\nu\left(\varphi\right)=\sum_{i=1}^{M(k)}\nu\left(\left.\varphi\right|_{_{\trngle_{i}^{(k)}}}\right).\label{eq:nodal domains of sum over the partition by k-frame}
\end{equation}
For all $i\in\left\{ 1,...,M(k)\right\} $ we have that $\left.\varphi\right|_{_{\trngle_{i}^{(k)}}}$
is an eigenfunction of the eigenvalue problem \eqref{eq:subdomains eigenvalue problem},
and therefore by Courant's nodal theorem we have
\begin{equation}
\sum_{i=1}^{M(k)}\nu\left(\left.\varphi\right|_{_{\trngle_{i}^{(k)}}}\right)\leq\sum_{i=1}^{M(k)}N_{i}^{(k)}\left(2^{k}\oval\right).\label{eq:courant thm on the partition}
\end{equation}
Rewriting the right-hand side of \eqref{eq:courant thm on the partition}
and combining \eqref{eq:number of domains for courantsharp function},
\eqref{eq:nodal domains of sum over the partition by k-frame} we
arrive at 
\[
N\left(2^{k}\oval\right)\leq\sum_{i=1}^{M(k)}\nup_{i}^{(k)}\left(2^{k}\oval\right)+\sum_{i=1}^{M(k)}\left[N_{i}^{(k)}\left(2^{k}\oval\right)-\nup_{i}^{(k)}\left(2^{k}\oval\right)\right].
\]
If we consider the eigenvalue problem on $\bigcup_{i}\trngle_{i}^{(k)}$
and use the variational principle to compare with the eigenvalue problem
on $\trngle$ we obtain 
\begin{equation}
\sum_{i=1}^{M(k)}\nup_{i}^{(k)}\left(2^{k}\oval\right)\leq\nup\left(2^{k}\oval\right).\label{eq:variational principle}
\end{equation}
The conclusion above appears for example in \cite{CourantHilbert_volume1},
page 408, Theorem 2 for Dirichlet boundary conditions. Having Neumann
boundary conditions, as in our case, brings to the same conclusion.

It follows that
\begin{equation}
\sum_{i=1}^{M(k)}\left[\nup_{i}^{(k)}\left(2^{k}\oval\right)-N_{i}^{(k)}\left(2^{k}\oval\right)\right]\leq\nup\left(2^{k}\oval\right)-N\left(2^{k}\oval\right).\label{eq:sum of subdomains counting small the domain counting}
\end{equation}
 By Lemma \ref{lem:degeneracy of subdomain} we have 
\begin{equation}
\nup\left(2^{k}\oval\right)-N\left(2^{k}\oval\right)\leq\nup_{i}^{(k)}\left(2^{k}\oval\right)-N_{i}^{(k)}\left(2^{k}\oval\right)\ \ \forall i\in\left\{ 1,..,M\left(k\right)\right\} .\label{eq:inequality of counting function d-1}
\end{equation}
Plugging this in \eqref{eq:sum of subdomains counting small the domain counting}
we get 
\[
\sum_{l=1}^{M(k)}\left[\nup_{l}^{(k)}\left(2^{k}\oval\right)-N_{l}^{(k)}\left(2^{k}\oval\right)\right]\leq\nup_{i}^{(k)}\left(2^{k}\oval\right)-N_{i}^{(k)}\left(2^{k}\oval\right)\ \ \forall i\in\left\{ 1,..,M\left(k\right)\right\} ,
\]
and as $M(k)\geq2$ , it has to be that 
\[
\nup_{i}^{(k)}\left(2^{k}\oval\right)-N_{i}^{(k)}\left(2^{k}\oval\right)=0\ \ \forall i\in\left\{ 1,..,M\left(k\right)\right\} ,
\]
 which proves the first part of the lemma. The second part follows
immediately from a combination of the first part with Lemma \ref{lem:degeneracy of subdomain}.\end{proof}
\begin{rem*}
The first claim of Lemma \ref{lem:CS_implies_simplicity} is not restricted
to the domains dealt with so far and indeed appears in a more general
form in Corollary 3.5(ii) of \cite{nodal_thms_a_la_courant}, where
it is proven for domains with Dirichlet boundary conditions. Yet,
the second claim of Lemma \ref{lem:CS_implies_simplicity} does not
hold for arbitrary domains, as it is based on the inequality \eqref{eq:degeneracy}
which is not satisfied in general.
\end{rem*}
~
\begin{rem*}
Lemma \ref{lem:CS_implies_simplicity},(2) may be also obtained as
a direct corollary of Lemma \ref{lem:appendix_nodal_deficiency} (appears
in Appendix \ref{sec:appendix_nodal_deficiency}) and Corollary \ref{cor:eiganvalue_is_odd_times_power_of_two},(1).
In fact, Lemma \ref{lem:appendix_nodal_deficiency} is a generalization
of Lemma \ref{lem:CS_implies_simplicity},(2).
\end{rem*}
With Lemma \ref{lem:CS_implies_simplicity} in hand, it is now possible
to rule out the Courant-sharpness of many more eigenvalues. This is
done by applying the lemma to the following particular subdomains
of the triangle.
\begin{defn}
We define the following subdomains of the $k$-frame partition:\end{defn}
\begin{enumerate}
\item A square subdomain $\sdom\in\P^{(1)}$ expressed by
\begin{equation}
\sdom=(\nicefrac{\pi}{2},\pi)\times(0,\nicefrac{\pi}{2}).\label{eq:square subdomain}
\end{equation}
\begin{figure}[H]
\begin{minipage}[c][1\totalheight][t]{0.3\columnwidth}%
\qquad{}\quad{}\includegraphics[scale=0.5]{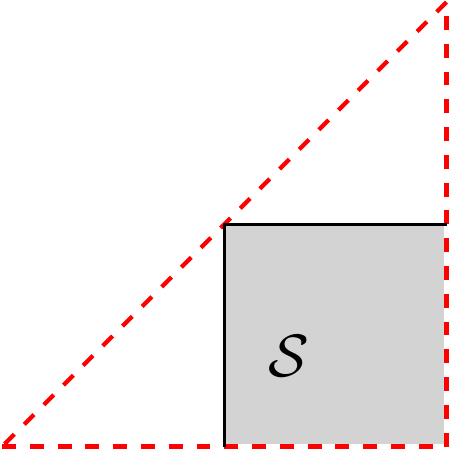}%
\end{minipage}\protect\caption{\label{fig:square}The subdomain $\protect\sdom$ as it appears in
the $1$-frame partition, $\protect\P_{1}$.}
\end{figure}

\item Rectangular subdomains $\recdom k\in\P^{(k)}\ ,\forall k\geq2$, expressed
recursively by\textbf{ }
\begin{equation}
\recdom 2=\left(U\left(\overline{\sdom}\right)\right)^{\circ}\textrm{\ \ and\ \ }\recdom k=\left(\left\{ \left(x,y\right)\in\trngle\at\,F\left(x,y\right)\in\overline{\recdom{k-1}}\right\} \right)^{\circ}.\label{eq:rectangular subdomain}
\end{equation}
\begin{figure}[H]
\includegraphics[scale=0.5]{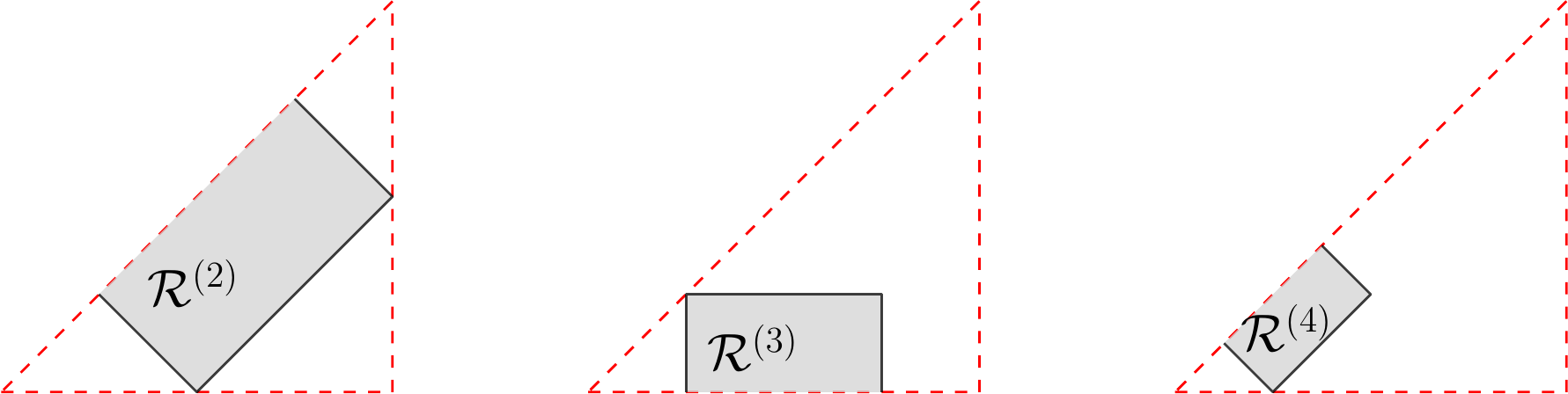}

\protect\caption{\label{fig:rectangle}The subdomain $\protect\recdom k$ as it appears
in the $k$-frame partition, $\protect\P^{(k)}$, for $k=2,3,4$.}
\end{figure}
\end{enumerate}
\begin{lem}
\label{lem:high unfoldings degenerate eigenvalues} Let $\left(m,n\right)\in\O\ ,\,n\neq0$.
Then the following holds:
\begin{enumerate}
\item Consider the eigenvalue problem \eqref{eq:subdomains eigenvalue problem}
on $\sdom$.\\
 Then $\lambda_{U_{\Q}\left(m,n\right)}\in\spec{\sdom}$ is a non-simple
eigenvalue.
\item Let $k>1$ and consider the eigenvalue problem \eqref{eq:subdomains eigenvalue problem}
on $\recdom k$. \\
Then $\lambda_{U_{\Q}^{k}\left(m,n\right)}\in\spec{\recdom k}$ is
a non-simple eigenvalue.
\end{enumerate}
\end{lem}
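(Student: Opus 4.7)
The plan is to prove both parts by explicit separation of variables on rectangular domains and to exhibit two linearly independent eigenfunctions for the specified eigenvalue. For part 1, the square $\sdom = (\pi/2, \pi) \times (0, \pi/2)$ inherits Dirichlet conditions on its two sides lying in $S^{(1)}$ (namely $\{x = \pi/2\}$ and $\{y = \pi/2\}$) and Neumann conditions on the remaining two sides $\{x = \pi\}$ and $\{y = 0\}$ (which lie in $\partial \trngle$). Separation of variables then yields eigenfunctions $\sin((2j+1)(x - \pi/2))\cos((2l+1)y)$ with eigenvalues $(2j+1)^2 + (2l+1)^2$ for $j,l \in \mathbb{N}_0$. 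For $(m,n) \in \O$ with $n \neq 0$, the condition $m > n \geq 1$ together with $m \not\equiv n \pmod 2$ makes $m+n$ and $m-n$ distinct positive odd integers, so the identity
\[
\lambda_{U_{\Q}(m,n)} = 2(m^2+n^2) = (m+n)^2 + (m-n)^2
\]
provides two distinct $(j,l)$-representations, and hence two linearly independent eigenfunctions.

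For part 2, I would proceed by induction on $k$. In the base case $k=2$, I would pass to the aligned coordinates $(u,v) = F(x,y) = (x+y, x-y)$, in which $\recdom 2$ becomes the axis-aligned rectangle $(\pi/2, 3\pi/2) \times (0, \pi/2)$, with Dirichlet conditions on the three sides in $S^{(2)}$ (namely $u = \pi/2$, $u = 3\pi/2$, and $v = \pi/2$) and Neumann on the remaining side $v = 0 \subseteq \partial \trngle$. The Jacobian relation $\Delta_{xy} = 2\Delta_{uv}$ combined with separation of variables gives eigenvalues $\mu = 2(j^2 + (2l+1)^2)$ for $j \geq 1$, $l \geq 0$, and the same algebraic decomposition provides two distinct $(j,l)$-representations of $\lambda_{U_{\Q}^{2}(m,n)} = 4(m^2+n^2)$. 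For the inductive step $k \geq 3$, I would use that $\recdom k = F^{-1}(\recdom{k-1}) \subseteq \hftrngle$, so $F$ restricts to a similarity of scale $\sqrt{2}$ from $\recdom k$ onto $\recdom{k-1}$; this pulls eigenfunctions back with eigenvalues multiplied by $2$. Provided the boundary conditions are preserved under this pullback (Dirichlet sides lying in $F^{-1}(S^{(k-1)}) \subseteq S^{(k)}$, while Neumann sides pull back to pieces of $\partial \trngle \cap \hftrngle$ together with portions of $\line$, on which the $R$-symmetry of eigenfunctions of even eigenvalues imposes Neumann), the resulting bijection between eigenspaces preserves multiplicity, and non-simplicity of $\lambda_{U_{\Q}^{k-1}(m,n)}$ on $\recdom{k-1}$ implies non-simplicity of $\lambda_{U_{\Q}^{k}(m,n)} = 2\lambda_{U_{\Q}^{k-1}(m,n)}$ on $\recdom k$.

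The hard part will be the careful bookkeeping of boundary conditions under the similarity $F$ in the inductive step. One must identify which portions of $\partial \recdom k$ lie on $S^{(k)}$, which on $\partial \trngle$, and which on $\line$, and then verify that the last two types carry Neumann conditions consistent with the $R$-symmetry of eigenfunctions of the even eigenvalue $2^k \oval$. This geometric bookkeeping, rather than the separation of variables itself, is the main technical obstacle, and it is what makes the direct reduction of the $\recdom k$ problem to the $\sdom$ problem non-trivial.
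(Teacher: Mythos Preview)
Your argument for part~1 is essentially the paper's: the same separation of variables on the mixed Dirichlet--Neumann square, the same algebraic identity $2(m^2+n^2)=(m+n)^2+(m-n)^2$, and the same observation that $m+n\neq m-n$ when $n\neq 0$.

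For part~2 you take a genuinely different route. The paper does \emph{not} induct on $k$; instead it parametrises $\recdom k$ uniformly for all $k\geq 2$ as a rectangle $(0,l)\times(0,l/2)$ with $l=\pi/2^{(k-1)/2}$, with Dirichlet conditions on three sides and Neumann on the remaining one, and reads off the spectrum $\{2^{k-1}(p^2+q^2):p\in\N,\ q\in 2\N_0+1\}$ directly. The same pair $(p,q)=(m+n,m-n)$ and $(m-n,m+n)$ then gives two independent eigenfunctions for $\lambda_{U_\Q^k(m,n)}=2^k(m^2+n^2)$, for every $k$ at once. This is shorter precisely because it avoids the boundary bookkeeping you identify as the hard part. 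Your inductive approach does work, but the specific worry you raise---that a Neumann side might pull back under $F$ to a portion of $\line$---does not in fact materialise: a direct check shows the (unique) Neumann side of $\recdom{k-1}$ alternates between $\{y=x\}$ and $\{y=0\}$, whose $F$-preimages are $\{y=0\}$ and $\{y=x\}$ respectively, never $\line$ (which is $F^{-1}(\{x=\pi\})$). Once that is verified, your inductive step goes through cleanly, and the $R$-symmetry argument you sketch for hypothetical $\line$-portions is not needed (and would in any case not apply directly, since the problem \eqref{eq:subdomains eigenvalue problem} on $\recdom k$ is posed intrinsically, not via restriction of eigenfunctions of $\trngle$). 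Your approach has the merit of explaining \emph{why} $\recdom k$ has three Dirichlet sides and one Neumann side, which the paper simply asserts; the paper's approach has the merit of finishing in one stroke.
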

\begin{proof}
We start by giving explicit expressions for the eigenvalues and the
eigenfunctions of \eqref{eq:subdomains eigenvalue problem} on the
domains $\sdom$ and $\recdom k$. To do that, we choose the following
convenient parametrizations for the domains. First we consider $\sdom$,
which we write as 
\[
\sdom=\left(0,\frac{\pi}{2}\right)\times\left(0,\frac{\pi}{2}\right).
\]
 The boundary conditions induced by the 1-frame are expressed by 
\[
\hat{\varphi}\at_{\left\{ x=0\textrm{ or }y=0\right\} }\equiv0\ ;\ \ \frac{\partial\hat{\varphi}}{\partial n}\at_{\partial\sdom\backslash\left\{ x=0\textrm{ or }y=0\right\} }\equiv0.
\]
The eigenvalues are 
\begin{equation}
\hat{\lambda}_{p,q}=\left(2p+1\right)^{2}+\left(2q+1\right)^{2}\ \ ;\ \left(p,q\right)\in\N_{0}\times\N_{0}\label{eq:square eigenvalues}
\end{equation}
 and the orthogonal set of eigenfunctions is given by
\begin{equation}
\hat{\varphi}_{p,q}\left(x,y\right)=\sin\left(\left(2p+1\right)x\right)\sin\left(\left(2q+1\right)y\right)\ \ ;\ \left(p,q\right)\in\N_{0}\times\N_{0}.\label{eq:eq:square eigenfunctions}
\end{equation}
We proceed with $\recdom k$. The edges have ratio $1:2$ and the
longest one is of length $l=\frac{\pi}{2^{\frac{k-1}{2}}}$ , thus
we may write 
\[
\recdom k=\left(0,l\right)\times\left(0,\frac{l}{2}\right).
\]
The boundary conditions induced by the k-frame are expressed by
\[
\hat{\varphi}\at_{\partial\recdom k\backslash\left\{ y=\frac{l}{2}\right\} }\equiv0\ ;\ \ \frac{\partial\hat{\varphi}}{\partial n}\at_{\left\{ y=\frac{l}{2}\right\} }\equiv0.
\]
The eigenvalues are given to be
\begin{equation}
\hat{\lambda}_{p,q}=2^{k-1}\left(p^{2}+q^{2}\right)\ \ ;\ \left(p,q\right)\in\N\times\left[2\N_{0}+1\right]\label{eq:rec eigenvalues}
\end{equation}
 and the orthogonal set of eigenfunctions is given by
\begin{equation}
\hat{\varphi}_{p,q}\left(x,y\right)=\sin\left(\frac{\pi\cdot p\cdot x}{l}\right)\sin\left(\frac{\pi\cdot q\cdot y}{l}\right)\ \ ;\ \left(p,q\right)\in\N\times\left[2\N_{0}+1\right].\label{eq:rec eigenfunctions}
\end{equation}
We proceed to prove both parts of the lemma by pointing out on two
linearly independent eigenfunctions which correspond to the relevant
eigenvalue. Recall that we consider $\left(m,n\right)\in\O$, with
$n\neq0.$
\begin{enumerate}
\item Define 
\[
\left(p_{1},q_{1}\right)=\left(\frac{m+n-1}{2},\frac{m-n-1}{2}\right),
\]
\[
\left(p_{2},q_{2}\right)=\left(q_{1},p_{1}\right).
\]
As $m\neq n\,\,\left(\textrm{mod~2}\right)$ we get $\left(p_{1},q_{1}\right),\,\,\left(p_{2},q_{2}\right)\in\N_{0}\times\N_{0}$
and since $n\neq0$ we get $\left(p_{1},q_{1}\right)\neq\left(p_{2},q_{2}\right)$.
By \eqref{eq:eq:square eigenfunctions} we get that $\hat{\varphi}_{p_{1},q_{1}}$
and $\hat{\varphi}_{p_{2},q_{2}}$ are linearly independent and by
\eqref{eq:square eigenvalues} we obtain 
\[
2\left(m^{2}+n^{2}\right)=\lambda_{U_{\Q}\left(m,n\right)}=\hat{\lambda}_{p_{1},q_{1}}=\hat{\lambda}_{p_{2},q_{2}}.
\]
Thus the eigenvalue $\lambda_{U_{\Q}\left(m,n\right)}\in\spec{\sdom}$
is non-simple.
\item \noindent Define 
\[
\left(p_{1},q_{1}\right)=\left(m+n,m-n\right),
\]
\[
\left(p_{2},q_{2}\right)=\left(q_{1},p_{1}\right).
\]
As $m\neq n\,\,\left(\textrm{mod2}\right)$ we get $\left(p_{1},q_{1}\right),\,\,\left(p_{2},q_{2}\right)\in\N\times\left[2\N_{0}+1\right]$
and since $n\neq0$ we get $\left(p_{1},q_{1}\right)\neq\left(p_{2},q_{2}\right)$.
By \eqref{eq:rec eigenfunctions} we get that $\hat{\varphi}_{p_{1},q_{1}}$
and $\hat{\varphi}_{p_{2},q_{2}}$ are linearly independent and by
\eqref{eq:rec eigenvalues} we obtain 
\[
2^{k}\left(m^{2}+n^{2}\right)=\lambda_{U_{\Q}^{k}\left(m,n\right)}=\hat{\lambda}_{p_{1},q_{1}}=\hat{\lambda}_{p_{2},q_{2}}.
\]
Thus the eigenvalue $\lambda_{U_{\Q}^{k}\left(m,n\right)}\in\spec{\recdom k}$
is non-simple.
\end{enumerate}
\end{proof}
Proposition \ref{prop:rulling_all_courant_sharp_traingle},\eqref{enu:prop_almost all high unfoldings are non courant sharp}
follows immediately by combining Lemma \ref{lem:CS_implies_simplicity},\eqref{enu:lem_CS_implies_simplicity_1}
with Lemma \ref{lem:high unfoldings degenerate eigenvalues}.

\subsection{Proving Proposition \ref{prop:rulling_all_courant_sharp_traingle},\eqref{enu:prop_The-eigenvalues-of U(m,0) not courant sharp}.\protect \\
}

By Lemma \ref{lem:CS_implies_simplicity},\eqref{enu:lem_CS_implies_simplicity_2}
it follows that we only need to rule out the Courant-sharpness of
the simple eigenvalues of $\Lambda^{(3)}$.
\begin{proof}[Proof of Proposition \ref{prop:rulling_all_courant_sharp_traingle},\eqref{enu:prop_The-eigenvalues-of U(m,0) not courant sharp}]
 In order to show that the simple eigenvalues of $\Lambda^{(3)}$
are not Courant-sharp, we find in the following a subset $\qtrngle{\lambda_{m,n}}\varsubsetneq\Q\left(\lambda_{m,n}\right)\cup\left\{ \left(m,n\right)\right\} $
such that $\left|\qtrngle{\lambda_{m,n}}\right|=\nu\left(\varphi_{m,n}\right)$.
This will rule out Courant-sharpness of a simple eigenvalue, since
then 
\[
N\left(\lambda_{m,n}\right)=\left|\Q\left(\lambda_{m,n}\right)\cup\left\{ \left(m,n\right)\right\} \right|>\left|\qtrngle{\lambda_{m,n}}\right|=\nu\left(\varphi_{m,n}\right).
\]
Before proceeding, we rewrite $\Lambda^{(3)}$ as an expression that
is more adjusted to the following arguments
\[
\Lambda^{(3)}=\left\{ \lambda_{2m,0}\right\} _{m=3}^{\infty}\cup\left\{ \lambda_{m,m}\right\} _{m=3}^{\infty}.
\]
First, we treat the case of a simple eigenvalue $\lambda_{m,m}$ for
$m\geq3.$ Its nodal set is 
\[
\varphi_{m,m}^{-1}\left\{ 0\right\} =\left\{ \left(x,y\right)\in\trngle\at\cos\left(mx\right)\cos\left(my\right)=0\right\} .
\]
Thus we deduce that there are $m$ nodal lines inside $\trngle$ parallel
to $x$-axis and $m$ that are parallel to the $y$-axis (Figure \ref{fig:T_3_3 and varphi_3_3},(a)).
The number of nodal domains is therefore
\begin{equation}
\nu\left(\varphi_{m,m}\right)=\sum_{i=0}^{m}\left(i+1\right).\label{eq:nodal domains of k,k}
\end{equation}
\begin{figure}[H]
\begin{centering}
\label{fig:nodal_set_3_3_and_6_0}%
\begin{minipage}[t]{0.4\columnwidth}%
\begin{center}
(a)~~~~~~\includegraphics[scale=0.4]{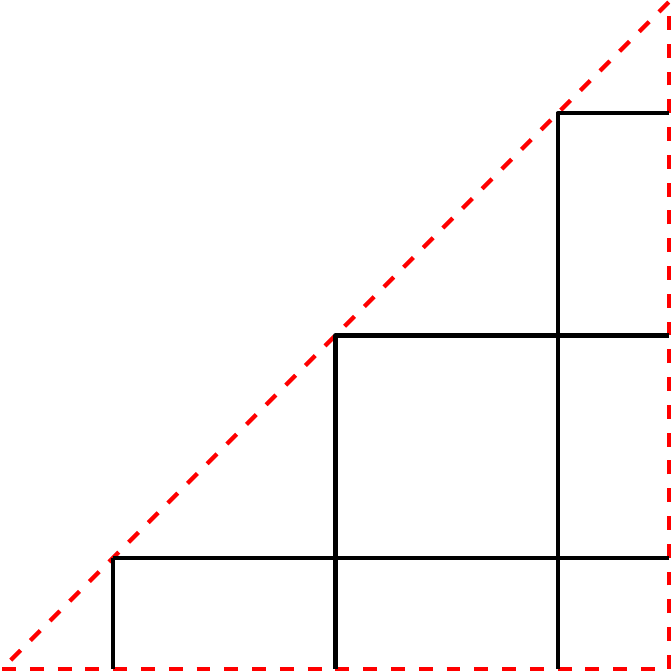}
\par\end{center}%
\end{minipage}%
\begin{minipage}[t]{0.4\columnwidth}%
(b)~~~~~~\includegraphics{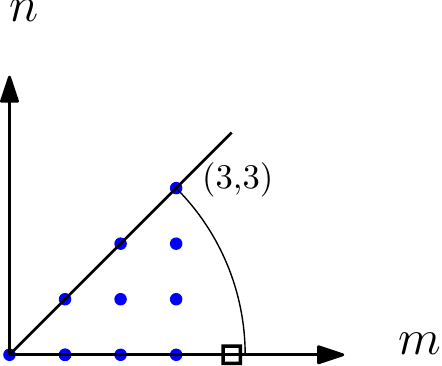}%
\end{minipage}
\par\end{centering}

\protect\caption{\label{fig:T_3_3 and varphi_3_3}(a) The nodal set of $\varphi_{3,3}$\quad{}(b)
The blue points correspond to $\protect\qtrngle{\lambda_{3,3}}$ and
the empty square corresponds to the point $\left(4,0\right)$.}
\end{figure}

Denote
\[
\qtrngle{\lambda_{m,m}}:=\left\{ \left(i,j\right)~\at\,0\leq j\leq i\leq m\right\} ,
\]
and observe that 
\begin{equation}
\left|\qtrngle{\lambda_{m,m}}\right|=\sum_{i=0}^{m}\left(i+1\right)=\nu\left(\varphi_{m,m}\right).\label{eq:arithmetic progression sum a,a}
\end{equation}

A simple calculation shows that (See Figure \ref{fig:T_3_3 and varphi_3_3},(b))
\[
\qtrngle{\lambda_{m,m}}\subseteq\Q\left(\lambda_{m,m}\right)\cup\left\{ \left(m,m\right)\right\} .
\]
In addition, for $m\geq3$ we have $\left(m+1,0\right)\in\Q\left(\lambda_{m,m}\right)\backslash\qtrngle{\lambda_{m,m}}$,
since 
\[
\left\Vert \left(m+1,0\right)\right\Vert ^{2}=m^{2}+2m+1<2m^{2}=\lambda_{m,m}.
\]
Thus we showed
\[
\qtrngle{\lambda_{m,m}}\subsetneq\Q\left(\lambda_{m,m}\right)\cup\left\{ \left(m,m\right)\right\} .
\]
Next, we treat the case of a simple eigenvalue $\lambda_{2m,0}\in\spec{\trngle}$
for $m\geq3$. This eigenvalue is the unfolding of $\lambda_{m,m}$
which we treated above (and therefore their multiplicity is equal).
Its nodal set is therefore determined easily (Figure \ref{fig:T_4_0}
,(a)) and the nodal count is given by 
\begin{equation}
\nu\left(\varphi_{2m,0}\right)=2\nu\left(\varphi_{m,m}\right)-\left(m+1\right)=\sum_{i=0}^{m}\left(2i+1\right).\label{eq:nodal domains of k,0}
\end{equation}
Denote
\[
\qtrngle{\lambda_{2m,0}}=\left\{ \left(m+j,m-i\right)\,\at\,0\leq i\leq m\ ,\,-i\leq j\leq i\right\} .
\]

\begin{figure}[H]
\begin{centering}
\label{fig:nodal_set_3_3_and_6_0-1}%
\begin{minipage}[t]{0.4\columnwidth}%
\begin{center}
(a)~~~~~~\includegraphics[scale=0.4]{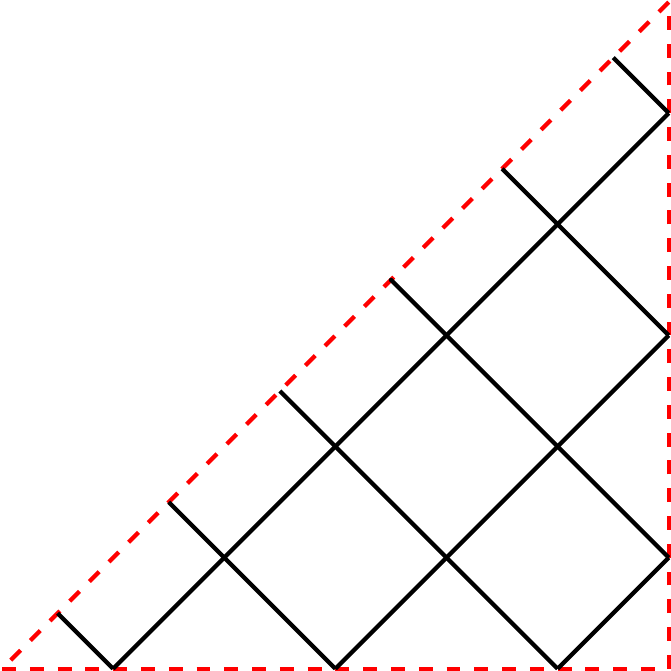}
\par\end{center}%
\end{minipage}%
\begin{minipage}[t]{0.4\columnwidth}%
(b)~~~~~~\includegraphics[scale=0.9]{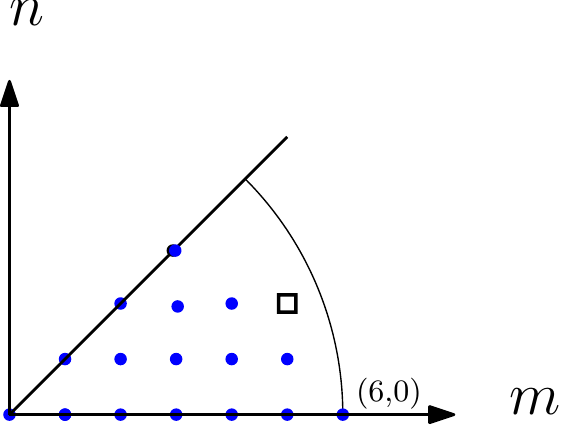}%
\end{minipage}
\par\end{centering}

\protect\caption{\label{fig:T_4_0}(a) The nodal set of $\varphi_{6,0}$\quad{}(b)
The blue points correspond to $\protect\qtrngle{\lambda_{6,0}}$ and
the empty square corresponds to the point $\left(5,2\right)$.}
\end{figure}
Observe that 
\begin{equation}
\left|\qtrngle{\lambda_{2m,0}}\right|=\sum_{i=0}^{m}\sum_{j=-i}^{i}1=\sum_{i=0}^{m}\left(2i+1\right)=\nu\left(\varphi_{2m,0}\right),\label{eq:arithmetic progression sum a,0}
\end{equation}
and a simple calculation shows that (See figure \ref{fig:T_4_0},
(b))
\[
\qtrngle{\lambda_{2m,0}}\subseteq\Q\left(\lambda_{2m,0}\right)\cup\left\{ \left(2m,0\right)\right\} .
\]
Observe that for $m\geq3$ we have $\left(2m-1,2\right)\in\Q\left(\lambda_{2m,0}\right)\backslash\qtrngle{\lambda_{2m,0}}$
since 
\[
\left\Vert \left(2m-1,2\right)\right\Vert ^{2}=4m^{2}-4m+5<4m^{2}=\lambda_{2m,0}.
\]
Thus we showed
\[
\qtrngle{\lambda_{2m,0}}\varsubsetneq\Q\left(\lambda_{2m,0}\right)\cup\left\{ \left(2m,0\right)\right\} .
\]

\end{proof}

\subsection{Concluding the proof of Theorem \ref{thm:CourantSharp_triangle}.\protect \\
}

Finally, Theorem \ref{thm:CourantSharp_triangle} is proved once we
show that the eigenvalues we have not ruled out are indeed Courant-sharp.
\begin{lem}
\label{lem:cournat sharpness of the eigenvals}The eigenvalues of
$\mathcal{C}=\{0\}\cup\left\{ \lambda_{U_{Q}^{k}\left(1,0\right)}\right\} _{k=0}^{3}$
are Courant-sharp\end{lem}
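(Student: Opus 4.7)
The plan is to verify Courant-sharpness directly: for each $\lambda\in\mathcal{C}$, exhibit an eigenfunction, compute its number of nodal domains, and match this to $N(\lambda)$. As a preliminary step, observe that each of the five eigenvalues in $\mathcal{C}$ is \emph{simple}. Indeed, $(1,0)$ is the unique representation of $1$ in $\Q$, so $\lambda_{1,0}$ is simple, and by Corollary \ref{cor:eiganvalue_is_odd_times_power_of_two} the eigenvalues $\lambda_{1,1}=2^{1}\cdot 1$, $\lambda_{2,0}=2^{2}\cdot 1$ and $\lambda_{2,2}=2^{3}\cdot 1$ inherit the same multiplicity. Consequently each eigenspace is spanned by a single $\varphi_{m,n}$ from \eqref{eq:eigenfunction}, and its nodal set is literally the zero locus of that function.

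The main step is to apply the product-to-sum identity $\cos\alpha+\cos\beta=2\cos\frac{\alpha+\beta}{2}\cos\frac{\alpha-\beta}{2}$ to each of the five eigenfunctions, yielding
\[
\varphi_{0,0}\equiv 2,\qquad \varphi_{1,0}(x,y)=2\cos\tfrac{x+y}{2}\cos\tfrac{x-y}{2},\qquad \varphi_{1,1}(x,y)=2\cos x\cos y,
\]
\[
\varphi_{2,0}(x,y)=2\cos(x+y)\cos(x-y),\qquad \varphi_{2,2}(x,y)=2\cos 2x\cos 2y.
\]
On $\trngle$ one has $\frac{x-y}{2}\in[0,\frac{\pi}{2}]$ and $x-y\in[0,\pi]$, so $\cos\frac{x-y}{2}$ is strictly positive on $\trngle^{\circ}$ and $\cos(x-y)$ vanishes only on the chord $\{x-y=\frac{\pi}{2}\}$. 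A short inspection then produces each nodal set inside $\trngle$: the segment $\line$ itself for $\varphi_{1,0}$; the two chords $\{x=\frac{\pi}{2}\}\cup\{y=\frac{\pi}{2}\}$ for $\varphi_{1,1}$; the connected zig-zag path consisting of the three chords $\{x+y=\frac{\pi}{2}\}$, $\{x-y=\frac{\pi}{2}\}$, $\{x+y=\frac{3\pi}{2}\}$ for $\varphi_{2,0}$; and the four axis-parallel chords $\{x=\frac{\pi}{4}\}$, $\{x=\frac{3\pi}{4}\}$, $\{y=\frac{\pi}{4}\}$, $\{y=\frac{3\pi}{4}\}$ restricted to $\trngle$ for $\varphi_{2,2}$.

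A direct inspection, or a routine Euler-formula count on each of the resulting planar graphs, then gives $\nu(\varphi_{0,0})=1$, $\nu(\varphi_{1,0})=2$, $\nu(\varphi_{1,1})=3$, $\nu(\varphi_{2,0})=4$ and $\nu(\varphi_{2,2})=6$. To finish, I would identify the spectral positions: combining the simplicity just noted with the observation that the only other pair $(m,n)\in\Q$ with $m^{2}+n^{2}\leq 8$ is $(2,1)$ (giving $\lambda_{2,1}=5$), the enumerated spectrum begins $\lambda_{1}=0<\lambda_{2}=1<\lambda_{3}=2<\lambda_{4}=4<\lambda_{5}=5<\lambda_{6}=8$, so the required $N(\lambda)$ values are $1,2,3,4,6$, matching the computed $\nu$'s. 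The only genuinely delicate point is the count for $\varphi_{2,2}$: the four chords have different lengths because of the constraint $y\leq x$ and there is a single interior crossing at $(\frac{3\pi}{4},\frac{\pi}{4})$, so I would confirm the count of six by tracking the sign pattern of $\cos(2x)\cos(2y)$ on each maximal sign-constant component of $\trngle^{\circ}\setminus\varphi_{2,2}^{-1}(0)$, checking that no component is spuriously disconnected by the hypotenuse $y=x$.
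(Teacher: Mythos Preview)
Your proof is correct and follows essentially the same direct-verification approach as the paper: establish simplicity via Corollary~\ref{cor:eiganvalue_is_odd_times_power_of_two}, compute the nodal counts of the five basis eigenfunctions, and match them against the spectral positions. The paper is terser---it dispatches $\lambda_{1},\lambda_{2}$ by the standard ``Courant bound plus orthogonality'' argument and cites the general formulas \eqref{eq:nodal domains of k,k}, \eqref{eq:nodal domains of k,0} for the remaining three---whereas you carry out the factorizations and nodal-set descriptions explicitly, but the substance is the same.
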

\begin{proof}
By Courant's bound and orthogonality, the first two eigenvalues, $\lambda_{0,0}=0$
and $\lambda_{1,0}=1$ are Courant-sharp. Next, note that the eigenvalue
$\lambda_{1,0}$ is simple and therefore all of the eigenvalues in
$\left\{ \lambda_{U_{Q}^{k}\left(1,0\right)}\right\} _{k=1}^{3}$
are simple as well. It is now straightforward to find the number of
nodal domains of the eigenfunctions in the set $\left\{ \varphi_{U_{Q}^{k}\left(1,0\right)}\right\} _{k=1}^{3}$
(see for example \eqref{eq:nodal domains of k,k},\eqref{eq:nodal domains of k,0})
and verify that those three eigenvalues are Courant-sharp as well.
\end{proof}
We end by noting that the nodal sets of the non-constant Courant-sharp
eigenfunctions are exactly the first four $k$-frames (see Figure
\ref{fig:four_unfoldings-1}).

\section{Proof of Theorem \ref{thm:CourantSharp_boxes} \label{sec:proof_for_boxes}}

We start by developing the eigenfunction folding structure of an $n$-dimensional
box, $\nbox$, whose edge length ratio is given by $\frac{l_{j}}{l_{j+1}}=\ohm:=2^{\nicefrac{1}{n}}$
($1\leq j\leq n-1$). For convenience we choose a scaling according
to which $l_{1}=\pi$. We start by following the construction from
Section \ref{sec:folding_structure_triangle} and present the folding
structure of the $\nbox$ eigenfunctions.

The set of quantum numbers in this case is 
\begin{equation}
\Q:=\left\{ \mvec\in\mathbb{N}_{0}^{n}\right\} .\label{eq:quantum_numbers_box}
\end{equation}
The orthogonal basis of eigenfunctions is 
\begin{equation}
\varphi_{\mvec}(\xvec)=\prod_{j=1}^{n}\cos(\ohm^{j-1}m_{j}x_{j})\ ;\ \mvec\in\Q\label{eq:eigenfunction_box}
\end{equation}
and the corresponding eigenvalues are
\begin{equation}
\lambda_{\mvec}=\sum_{j=1}^{n}\left(\ohm^{j-1}m_{j}\right)^{2}~~;\ \mvec\in\Q.\label{eq:eigenvalue_box}
\end{equation}
We use the notations 
\[
\spec{\nbox}:=\left\{ \lambda_{\mvec}\,\at\,\mvec\in\Q\right\} ,
\]

\[
\Q(\lambda):=\left\{ \mvec\in\Q\,\at\,\lambda_{\mvec}<\lambda\right\} 
\]

and have as before that
\[
\ndown\left(\lambda\right)=\left|\Q(\lambda)\right|.
\]

The box $\nbox$ is symmetric with respect to the following hyperplane
\begin{equation}
\line=\left\{ \left.\xvec\in\nbox\right|x_{1}=\frac{\pi}{2}\right\} ,\label{eq:hyperplabe_of_box}
\end{equation}

and the reflection transformation is
\begin{equation}
R\left(\xvec\right)=\left(\pi-x_{1},~x_{2},\ldots,x_{n}\right).\label{eq:reflection_box}
\end{equation}

As opposed to the case of the triangle, the eigenvalues of $\nbox$
are not integers (with the exception of the case $n=2$, where they
are), but rather belong to $\Z\left[\ohm^{2}\right]$, a finite ring
extension of $\Z$. We consider $\Z\left[\ohm^{2}\right]$ as a free
module with the following basis 
\begin{equation}
\bas=\begin{cases}
\left\{ \ohm^{j}\right\} _{j=0}^{n-1} & ~~~n\textrm{~is odd}\\
\\
\left\{ \ohm^{2j}\right\} _{j=0}^{\nicefrac{n}{2}-1} & ~~~n\textrm{~is even}
\end{cases}.\label{eq:generating_set}
\end{equation}
Furthermore, in the unique representation of $\lambda\in\spec{\nbox}$
as a linear combination of this basis, the coefficients are taken
from $\N_{0}$. This is used to define the parity of an eigenvalue.
\begin{defn}
\label{def:parity_evalue_box} Denoting by $\pr{\lambda}$ the coefficient
multiplying $\ohm^{0}=1$ when spanning $\lambda\in\spec{\nbox}$
by $\bas$, we call $\lambda$ an \emph{odd }(\emph{even})\emph{ eigenvalue}
if $\pr{\lambda}$ is odd (even).
\end{defn}
Hence, we adopt here the dichotomy to even and odd eigenvalues, similarly
to the one we had in Section \ref{sec:folding_structure_triangle}.
The parity of an eigenvalue dictates the parity of all of its eigenfunctions
with respect to the reflection across $\line$, which is proved in
the following (analogously to Lemma \ref{lem:eigenfunction_symmetry}).
\begin{lem}
\label{lem:eigenfunction_symmetry_box}\emph{ }Let $\lambda\in\sigma\left(\nbox\right)$,
then any eigenfunction of $\lambda$ is odd (even) with respect to
$\line$\emph{ if and only if }$\lambda$ is an odd (even) eigenvalue.\end{lem}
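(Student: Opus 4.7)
The plan is to mirror the proof of Lemma \ref{lem:eigenfunction_symmetry} almost verbatim, with the only genuinely new input being the translation between the parity of the first quantum number and the parity of $\pr{\lambda}$ as defined in Definition \ref{def:parity_evalue_box}. First I would test the reflection $R$ on a basis eigenfunction: using $\cos(m_1(\pi-x_1))=(-1)^{m_1}\cos(m_1 x_1)$ and the fact that $R$ leaves $x_2,\ldots,x_n$ untouched, a one-line trigonometric computation on \eqref{eq:eigenfunction_box} yields
\[
\varphi_{\mvec}\circ R \;=\; (-1)^{m_1}\,\varphi_{\mvec},
\]
so $\varphi_{\mvec}$ is even (odd) with respect to $\line$ precisely when $m_1$ is even (odd).

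The second step is to show that $m_1\bmod 2$ coincides with the parity of $\pr{\lambda_{\mvec}}$. Starting from $\lambda_{\mvec}=\sum_{j=1}^{n}\ohm^{2(j-1)}m_j^2$, I would reduce each $\ohm^{2(j-1)}$ into the module basis $\bas$ using the defining identity $\ohm^{n}=2$. The coefficient of $\ohm^{0}=1$ only receives contributions from those indices $j\in\{1,\ldots,n\}$ with $2(j-1)\equiv 0\pmod{n}$. Since $0\le 2(j-1)\le 2n-2<2n$, the only candidates are $j=1$ and, when $n$ is even, $j=n/2+1$. Thus $\pr{\lambda_{\mvec}}=m_1^2$ for $n$ odd and $\pr{\lambda_{\mvec}}=m_1^2+2m_{n/2+1}^2$ for $n$ even, and in either case $\pr{\lambda_{\mvec}}\equiv m_1\pmod 2$.

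Combining the two steps gives the lemma for every basis eigenfunction. Since $\pr{\lambda}$ depends only on $\lambda$, all $\varphi_{\mvec}$ with $\lambda_{\mvec}=\lambda$ share the same parity of $m_1$, hence the same behavior under $R$; as these eigenfunctions span the $\lambda$-eigenspace, the parity statement extends by linearity to an arbitrary eigenfunction of $\lambda$. The step I expect to be the most delicate is the module reduction in the middle paragraph: one must treat the odd and even cases of $n$ separately, confirm that the expansion in $\bas$ is actually unique so that $\pr{\lambda}$ is well-defined, and check that no second wrap-around with $2(j-1)\ge 2n$ can occur for $j\le n$, which would otherwise add further unwanted contributions to the coefficient of $1$.
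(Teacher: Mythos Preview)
Your proposal is correct and follows essentially the same approach as the paper: compute $\varphi_{\mvec}\circ R=(-1)^{m_1}\varphi_{\mvec}$, reduce $\lambda_{\mvec}$ in the basis $\bas$ to obtain $\pr{\lambda_{\mvec}}=m_1^2$ for odd $n$ and $\pr{\lambda_{\mvec}}=m_1^2+2m_{n/2+1}^2$ for even $n$, and then extend by linearity to the full eigenspace. The only cosmetic difference is that the paper states the formula for $\pr{\lambda}$ first and the reflection computation second, whereas you reverse the order.
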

\begin{proof}
Let $\mvec\in\Q$ such that $\varphi_{\mvec}$ is an eigenfunction
corresponding to $\lambda$. Writing $\lambda$ as $\sum_{j=1}^{n}\left(\ohm^{j-1}m_{j}\right)^{2}$
and using that $\bas$ is a basis we have that 
\begin{equation}
\pr{\lambda}=\begin{cases}
m_{1}^{2} & ~~\textrm{n is odd}\\
m_{1}^{2}+2m_{\nicefrac{n}{2}+1}^{2} & ~~\textrm{n is even}
\end{cases},\label{eq:parity_of_lambda}
\end{equation}
and in both cases the parity of $\lambda$ equals the parity of $m_{1}$.
From the explicit expression of the eigenfunction, \eqref{eq:eigenfunction_box},
we see that $\varphi_{\mvec}$ is odd (even) with respect to $L$
if and only if $m_{1}$ is odd (even). If $\lambda$ is a multiple
eigenvalue and it is odd (even), the argument above gives that the
basis, $\left\{ \varphi_{\mvec}~\at~\lambda_{\mvec}=\lambda\right\} $,
of its eigenspace consists of odd (even) eigenfunctions and therefore
so is any eigenfunction of $\lambda$.
\end{proof}
\noindent As in Section \ref{sec:folding_structure_triangle}, Lemma
\ref{lem:eigenfunction_symmetry_box} motivates the following definition
(compare with Definition \ref{def:odd_even_spectrum}).
\begin{defn}
\label{def:odd_even_spectrum_box} We define the subsets of $\Q$
that correspond to the odd and even eigenvalues

\begin{equation}
\begin{array}{c}
\O\coloneqq\left\{ \vec{m}\in\Q\,|\,\,m_{1}=1\,(mod~2)\right\} ,\\
\\
\E\coloneqq\left\{ \vec{m}\in\Q\,|\,\,m_{1}=0\,(mod~2)\right\} 
\end{array}\label{eq:odd_even_box_qn}
\end{equation}
and we denote the corresponding sets of eigenvalues by
\[
\ospec{\B^{(n)}}\coloneqq\left\{ \lambda_{\vec{m}}\at\vec{m}\in\O\right\} ,
\]
\[
\espec{\B^{(n)}}\coloneqq\left\{ \lambda_{\vec{m}}\at\vec{m}\in\E\right\} .
\]

\end{defn}
\noindent Denote 
\[
\hfbox=\left\{ \xvec\in\trngle:\,x_{1}\leq\frac{\pi}{2}\right\} .
\]

\noindent Observe that $\line$ partitions $\nbox$ into the two isometric
boxes $\hfbox$ and $\left(\nbox\backslash\hfbox\right)\cup\line$,
each is a scaled version of $\nbox$ by a factor $\ohm$. Namely,
\begin{equation}
\left(l_{1},\ldots,l_{n-1},l_{n}\right)=\ohm\cdot\left(l_{2},~\ldots,l_{n},~\frac{l_{1}}{2}\right),\label{eq:measures_of_box_and_half_box}
\end{equation}
where the left-hand side gives the edge lengths of $\nbox$ and the
right-hand side the edge lengths of $\hfbox$ (see Figure \ref{fig:box and half box}).
\begin{rem*}
Equation \eqref{eq:measures_of_box_and_half_box} may be perceived
as a generalization of the A-series ($A_{3},$ $A_{4}$, etc.) paper
sizes to higher dimensions.
\end{rem*}
\noindent 
\begin{figure}
\includegraphics[scale=0.75]{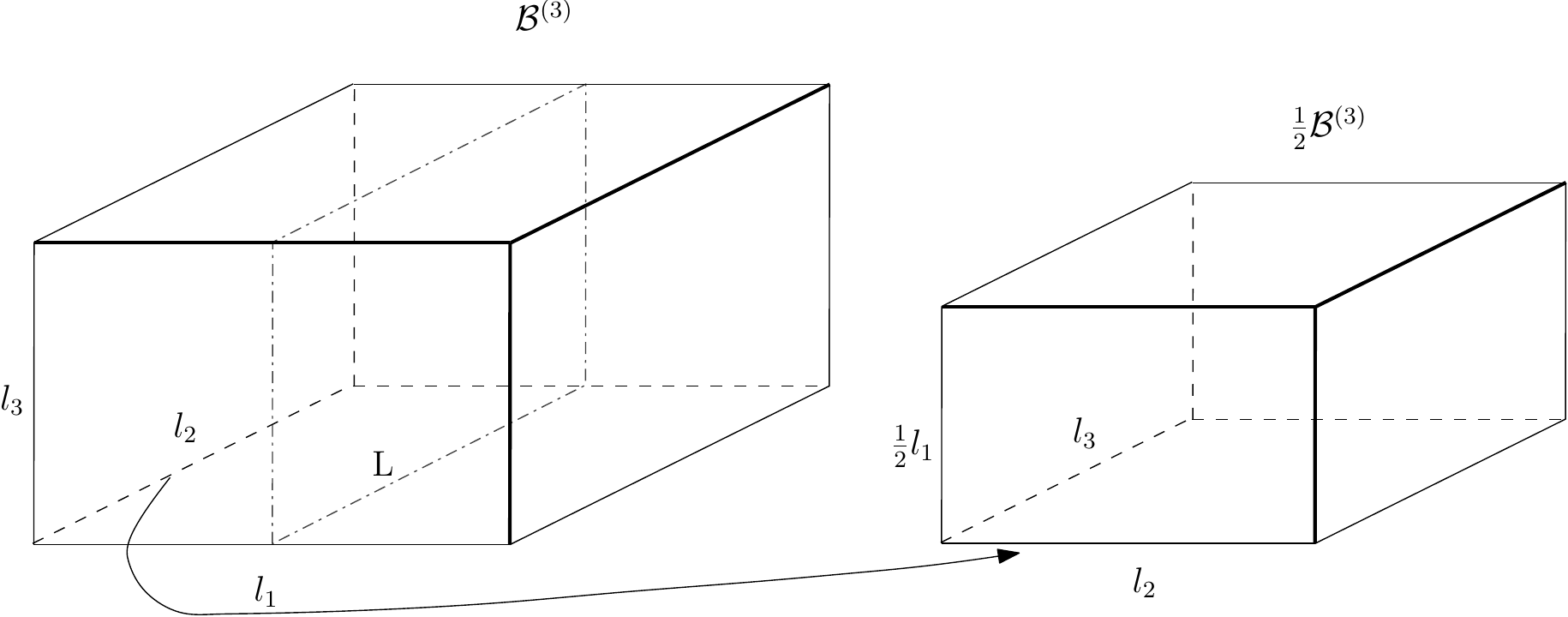}

\protect\caption{\label{fig:box and half box}Illustration of $\protect\B^{(3)}$ decomposed
into two similar boxes, one of which is $\frac{1}{2}\protect\B^{(3)}$
(up to rotation).}
\end{figure}
This similarity reveals the folding structure of the $\nbox$ eigenfunctions.
Indeed the following two definitions and lemma are analogous to Definitions
\ref{def:fold_unfold_coords}, \ref{def:fold_unfold_efunc} and Lemma
\ref{lem:fold_unfold_efunc} of the triangle case.
\begin{defn}
\label{def:fold_unfold_coords_box} The \emph{coordinate folding transformation}
is

\begin{align}
\begin{array}{c}
F:\hfbox\to\nbox\\
\\
F\left(x_{1},x_{2},\ldots,x_{n}\right)\coloneqq\ohm\cdot\left(x_{2},~x_{3},\ldots,x_{n},~x_{1}\right)
\end{array}\label{eq:fold_coordinates_box}
\end{align}

and the \emph{coordinate unfolding transformation} $U$ is the inverse
of $F$ and is expressed by 
\begin{equation}
\begin{array}{c}
U:\nbox\to\hfbox\\
\\
U\left(x_{1},x_{2},\ldots,x_{n}\right)=\ohm^{-1}\cdot\left(x_{n},~x_{1},\ldots,x_{n-1}\right).
\end{array}\label{eq:unfold_coords_box}
\end{equation}

\end{defn}
~
\begin{defn}
~\label{def::fold_unfold_efunc_box} Let $\varphi$ be an eigenfunction
corresponding to the eigenvalue $\lambda\in\sigma\left(\nbox\right)$.
\begin{enumerate}
\item Assume $\lambda$ is even. Then the \emph{folded function} $\fold\varphi$
is defined by 
\begin{equation}
\fold\varphi\left(\xvec\right)=\varphi\circ U\left(\xvec\right)\,,\,\xvec\in\nbox.\label{eq:fold_fcn_box}
\end{equation}

\item The \emph{unfolded function},$\unfold\varphi$, is
\begin{equation}
\unfold\varphi\left(\xvec\right)=\begin{cases}
\varphi\circ F\left(\xvec\right) & \xvec\in\hfbox\\
\left(\varphi\circ F\right)\circ R\left(\xvec\right) & \xvec\in\nbox\backslash\hfbox
\end{cases}.\label{eq:unfold_fcn_box}
\end{equation}

\end{enumerate}
\end{defn}
\begin{lem}
\label{lem:fold_unfold_efunc_box} Let $\varphi=\underset{\mvec;~\lambda_{\mvec}=\lambda}{\sum}\alpha_{\mvec}\cdot\varphi_{\mvec}$
be an eigenfunction corresponding to the eigenvalue $\lambda\in\sigma\left(\nbox\right)$. 
\begin{enumerate}
\item If $\lambda$ is even, then the folded function is $\fold\varphi=\underset{\mvec;~\lambda_{\mvec}=\lambda}{\sum}\alpha_{\mvec}\cdot\varphi_{F_{\Q}\left(\mvec\right)}$
and corresponds to the eigenvalue $\lambda_{F_{\Q}\left(\mvec\right)}=\ohm^{-2}\lambda_{\mvec}$,
with \emph{
\begin{equation}
\begin{array}{c}
F_{\Q}\left(\mvec\right):=\left(m_{2},~m_{3},\ldots,m_{n},~\frac{m_{1}}{2}\right)\end{array}.\label{eq:fold_quant_numbers_box}
\end{equation}
}
\item The unfolded function is $\unfold\varphi=\underset{\mvec;~\lambda_{\mvec}=\lambda}{\sum}\alpha_{\mvec}\cdot\varphi_{U_{\Q}\left(\mvec\right)}$
and corresponds to the eigenvalue $\lambda_{U_{\Q}\left(\mvec\right)}=\ohm^{2}\lambda_{\mvec}$,
with
\begin{equation}
\begin{array}{c}
U_{\Q}\left(\mvec\right):=\left(2m_{n},~m_{1},~m_{2},\ldots,m_{n-1}\right).\end{array}\label{eq:unfold_quant_numbers_box}
\end{equation}

\end{enumerate}
\end{lem}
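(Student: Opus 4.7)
The plan is to follow the same strategy as in the proof of Lemma \ref{lem:fold_unfold_efunc}. By linearity of $\fold$ and $\unfold$, it suffices to verify each identity for a single basis element $\varphi_\mvec$ with $\lambda_\mvec = \lambda$ and then sum. So the task reduces to showing that $\fold \varphi_\mvec = \varphi_{F_\Q(\mvec)}$ (when $\lambda$ is even) and $\unfold \varphi_\mvec = \varphi_{U_\Q(\mvec)}$, that $F_\Q(\mvec), U_\Q(\mvec)$ both lie in $\Q$, and that the claimed eigenvalue scalings hold.

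For the functional identities, the plan is a direct substitution of the coordinate formulas \eqref{eq:fold_coordinates_box} and \eqref{eq:unfold_coords_box} into the product expression \eqref{eq:eigenfunction_box}. For folding,
\[
\fold\varphi_\mvec(\xvec) \,=\, \varphi_\mvec\!\left(\ohm^{-1}(x_n, x_1, \ldots, x_{n-1})\right) \,=\, \cos(\ohm^{-1}m_1 x_n)\prod_{k=1}^{n-1}\cos\!\left(\ohm^{k-1} m_{k+1}\, x_k\right),
\]
after a reindexing $k = j-1$ of the cyclic shift. The defining identity $\ohm^{n} = 2$ rewrites the first factor as $\cos(\ohm^{n-1}\cdot(m_1/2)\cdot x_n)$, matching the last factor of $\varphi_{F_\Q(\mvec)}(\xvec)$. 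The analogous substitution for $\unfold\varphi_\mvec$ on $\hfbox$ produces $\cos(\ohm^{n} m_n x_1)\prod_{k=2}^{n}\cos(\ohm^{k-1} m_{k-1} x_k)$, and once again $\ohm^{n} = 2$ identifies this with $\varphi_{U_\Q(\mvec)}$.

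To conclude that the folded and unfolded objects are genuine eigenfunctions with the stated eigenvalues, I would verify $F_\Q(\mvec), U_\Q(\mvec) \in \Q$. For $U_\Q$ this is immediate; for $F_\Q$ the only entry needing attention is $m_1/2$, and formula \eqref{eq:parity_of_lambda} (established inside the proof of Lemma \ref{lem:eigenfunction_symmetry_box}) forces $m_1$ to be even whenever $\lambda_\mvec$ is even. The eigenvalue identities then reduce to a routine index shift together with $\ohm^{2n} = 4$: for example,
\[
\lambda_{U_\Q(\mvec)} \,=\, (2m_n)^2 + \ohm^2\sum_{j=1}^{n-1}\ohm^{2j-2}m_j^2 \,=\, \ohm^{2n}m_n^2 + \ohm^2\sum_{j=1}^{n-1}\ohm^{2j-2}m_j^2 \,=\, \ohm^{2}\lambda_\mvec,
\]
and the analogous calculation gives $\lambda_{F_\Q(\mvec)} = \ohm^{-2}\lambda_\mvec$.

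One subtle point specific to the unfolding is that the direct computation above identifies $\unfold\varphi_\mvec$ with $\varphi_{U_\Q(\mvec)}$ only on $\hfbox$, since on $\nbox\setminus\hfbox$ the unfolded function is defined by reflection across $\line$. However $\varphi_{U_\Q(\mvec)}$ has first quantum number $2m_n$, which is even, so by Lemma \ref{lem:eigenfunction_symmetry_box} it is symmetric with respect to $\line$; two functions symmetric about $\line$ that agree on $\hfbox$ agree on all of $\nbox$. The main obstacle is only the careful bookkeeping of the cyclic index shift and the repeated use of $\ohm^{n}=2$; no new analytic input is needed beyond what already appears in the triangle case.
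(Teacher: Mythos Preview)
Your proposal is correct and follows essentially the same approach as the paper: reduce by linearity to a single basis element, compute $\fold\varphi_{\mvec}$ and $\unfold\varphi_{\mvec}$ by direct substitution using $\ohm^{n}=2$, check $F_{\Q}(\mvec),U_{\Q}(\mvec)\in\Q$ via \eqref{eq:parity_of_lambda}, verify the eigenvalue scalings, and handle $\nbox\setminus\hfbox$ for the unfolding by the evenness of $\varphi_{U_{\Q}(\mvec)}$ through Lemma~\ref{lem:eigenfunction_symmetry_box}. This matches the paper's proof in both structure and detail.
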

~
\begin{proof}
~
\begin{enumerate}
\item Let $\lambda$ be an even eigenvalue of $\nbox$. Let $\mvec\in\Q$
be such that $\lambda_{\mvec}=\lambda$. As $\pr{\lambda}$ is even
we conclude that $m_{1}$ is even as well (see \eqref{eq:parity_of_lambda})
and therefore, $F_{\Q}\left(\mvec\right)\in\Q$ so that $\varphi_{F_{\Q}\left(\mvec\right)}$
is well defined and it is an eigenfunction of $\nbox$. Combining
the form of the eigenfunction $\varphi_{\mvec}$, \eqref{eq:eigenfunction_box},
with the definition of its folding, \eqref{eq:fold_fcn_box}, we get
\begin{align*}
\fold\varphi_{\mvec}\left(\xvec\right) & =\varphi_{\mvec}\circ U\left(\xvec\right)\\
 & =\cos\left(m_{1}\ohm^{-1}x_{n}\right)\prod_{j=2}^{n}\cos\left(\ohm^{j-1}m_{j}\ohm^{-1}x_{j-1}\right)\\
 & =\cos\left(\ohm^{n-1}\frac{1}{2}m_{1}x_{n}\right)\prod_{j=1}^{n-1}\cos\left(\ohm^{j-1}m_{j+1}x_{j}\right)\\
 & =\varphi_{F_{\Q}\left(\mvec\right)}\left(\xvec\right).
\end{align*}
If $\lambda$ is a multiple eigenvalue, the calculation above is valid
for any eigenfunction of the form $\varphi_{\mvec}$ and by linearity
it extends to $\fold\varphi=\underset{\mvec;~\lambda_{\mvec}=\lambda}{\sum}\alpha_{\mvec}\cdot\varphi_{F_{\Q}\left(\mvec\right)}$.
Calculating the eigenvalue corresponding to $\varphi_{F_{\Q}\left(\mvec\right)}$
we get 
\begin{equation}
\lambda_{F_{\Q}\left(\mvec\right)}=\ohm^{-2}m_{1}^{2}+\sum_{j=1}^{n-1}\left(\ohm^{j-1}m_{j+1}\right)^{2}=\ohm^{-2}\sum_{j=1}^{n}\left(\ohm^{j-1}m_{j}\right)^{2}=\ohm^{-2}\lambda_{\mvec}.\label{eq:folding_evalue_box}
\end{equation}

\item \noindent Let $\lambda\in\spec{\nbox}$ and let $\mvec\in\Q$ such
that $\lambda_{\mvec}=\lambda$. Let $\xvec\in\hfbox$.
\begin{align*}
\unfold\varphi_{\mvec}\left(\xvec\right) & =\varphi_{\mvec}\circ F\left(\xvec\right)\\
 & =\cos\left(\ohm^{n-1}m_{n}\ohm x_{1}\right)\prod_{j=1}^{n-1}\cos\left(\ohm^{j-1}m_{j}\ohm x_{j+1}\right)\\
 & =\cos\left(2m_{n}x_{1}\right)\prod_{j=2}^{n}\cos\left(\ohm^{j-1}m_{j-1}x_{j}\right)\\
 & =\varphi_{U_{\Q}\left(\mvec\right)}\left(\xvec\right).
\end{align*}
For $\xvec\in\nbox\backslash\hfbox$ we have
\[
\unfold\varphi_{\mvec}\left(\xvec\right)\underbrace{=}_{\text{\eqref{eq:unfold_fcn_box}}}\unfold\varphi_{\mvec}\left(R\left(\xvec\right)\right)\underbrace{=}_{\text{\ensuremath{R\left(\xvec\right)\in\hfbox}}}\varphi_{U_{\Q}\left(\mvec\right)}\left(R\left(\xvec\right)\right)\underbrace{=}_{\text{Lemma \ref{lem:eigenfunction_symmetry_box}}}\varphi_{U_{\Q}\left(\mvec\right)}\left(\xvec\right).
\]
 Just as in the first part of the proof, we may use linearity to extend
the relation above to the whole eigenspace of $\lambda$. In addition,
it is easily verified that $\lambda_{U_{\Q}\left(\mvec\right)}=\ohm^{2}\lambda_{\mvec}.$
\end{enumerate}
\end{proof}
The last lemma allows to show that the eigenvalues inherit the folding
structure. This is shown in the following, which is analogous to Corollary
\ref{cor:eiganvalue_is_odd_times_power_of_two}.
\begin{cor}
\begin{singlespace}
~\label{cor:eiganvalue_is_odd_times_power_of_gamma}\end{singlespace}

\begin{enumerate}
\item \label{enu:cor_eiganvalue_is_odd_times_power_of_gamma_1}Let $0\neq\lambda\in\spec{\nbox}$.
Then there exist unique $\oval\in\ospec{\nbox}$ and $k\in\N_{0}$
such that $\lambda=\ohm^{2k}\oval$. Furthermore, $\mult\left(\lambda\right)=\mult\left(\oval\right)$.
\item \label{enu:cor_eiganvalue_is_odd_times_power_of_gamma_2}Let $\oval\in\ospec{\nbox}$
and $k\in\N_{0}$. Then $\ohm^{2k}\oval\in\spec{\nbox}$.
\end{enumerate}
\end{cor}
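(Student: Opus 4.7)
The plan is to mirror the proof of Corollary \ref{cor:eiganvalue_is_odd_times_power_of_two}, using Lemma \ref{lem:fold_unfold_efunc_box} in place of Lemma \ref{lem:fold_unfold_efunc}. Part (2) is essentially immediate: given an eigenfunction $\varphi$ of $\oval \in \ospec{\nbox}$, iterate the unfolding operator $k$ times; by Lemma \ref{lem:fold_unfold_efunc_box}(2) the result is an eigenfunction with eigenvalue $\ohm^{2k}\oval$, so $\ohm^{2k}\oval \in \spec{\nbox}$.

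For existence in part (1), let $0 \neq \lambda \in \spec{\nbox}$. If $\lambda$ is odd, set $k=0$ and $\oval=\lambda$. Otherwise Lemma \ref{lem:fold_unfold_efunc_box}(1) gives $\ohm^{-2}\lambda \in \spec{\nbox}$, and we iterate. Each folding strictly decreases the positive eigenvalue by a factor $\ohm^{-2}<1$, and since $\spec{\nbox}$ has only finitely many eigenvalues below $\lambda$ by discreteness, the process must terminate at an odd eigenvalue $\oval$ with $\lambda = \ohm^{2k}\oval$.

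The main obstacle is uniqueness, which in the triangle case came for free from the unique factorization $\lambda = 2^k \cdot (\textrm{odd})$ in $\N$. Here $\spec{\nbox} \subset \Z[\ohm^2]$, so a replacement is needed. The key claim to establish is that for every $\mu \in \Z[\ohm^2]$, the product $\ohm^2 \mu$ is \emph{even} in the sense of Definition \ref{def:parity_evalue_box}. I would verify this by expanding $\mu$ in the basis $\bas$ and reducing the shifted expansion $\ohm^2\mu$ using the relation $\ohm^n=2$; a direct calculation shows that the coefficient of $1 \in \bas$ in $\ohm^2\mu$ is always divisible by $2$ (in either parity of $n$). Granted this, if $\ohm^{2a}\oval_1 = \ohm^{2b}\oval_2$ with $a \geq b$ and both $\oval_i$ odd, assuming $a>b$ would give $\oval_2 = \ohm^2 \cdot (\ohm^{2(a-b-1)}\oval_1)$; by part (2) already proven, the interior factor lies in $\spec{\nbox}$, so $\oval_2$ would be forced to be even, contradicting the assumption. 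Hence $a=b$ and $\oval_1 = \oval_2$.

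Finally, the multiplicity equality $\mult(\lambda) = \mult(\oval)$ follows because $\fold^k$ and $\unfold^k$ restrict to mutually inverse linear maps between the eigenspaces of $\lambda$ and $\oval$: both preserve eigenfunction-hood by Lemma \ref{lem:fold_unfold_efunc_box}, and the identities $F \circ U = \id$, $U \circ F = \id$ on the relevant coordinate domains ensure the composition is the identity at the level of functions, so the eigenspaces have equal dimension.
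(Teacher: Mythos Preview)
Your proof is correct and follows the same overall architecture as the paper's: part~(2) via iterated unfolding, existence in part~(1) via iterated folding, uniqueness via a parity contradiction, and the multiplicity statement via $\fold^{k}$ and $\unfold^{k}$ being mutually inverse linear maps between eigenspaces.

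Two of your sub-arguments differ from the paper's in a way worth recording. For termination of the folding process, you invoke discreteness of the spectrum (a strictly decreasing sequence of positive eigenvalues below $\lambda$ must be finite); the paper instead writes each entry as $m_{j}=p_{j}2^{k_{j}}$ and tracks how $F_{\Q}$ cyclically shifts and halves until an odd entry surfaces. Your argument is shorter and uses nothing about the explicit form of the eigenvalues. For uniqueness, you do a direct ring computation in $\Z[\ohm^{2}]$ showing that $\pr{\ohm^{2}\mu}$ is always even; the paper instead observes that any once-unfolded eigenfunction is even with respect to $\line$ by construction (equation~\eqref{eq:unfold_fcn_box}), and then invokes Lemma~\ref{lem:eigenfunction_symmetry_box} to conclude its eigenvalue is even. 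The paper's route is more geometric and reuses the reflection symmetry already set up; yours is self-contained algebra and does not need the interior factor to be an eigenvalue at all (membership in $\Z[\ohm^{2}]$ suffices), so the appeal to part~(2) there is in fact unnecessary.
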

\begin{proof}
We start by observing that the second claim may be proven similarly
to the second claim of Corollary \ref{cor:eiganvalue_is_odd_times_power_of_two}
- start from any eigenfunction of $\oval$ and by unfolding it $k$
times get an eigenfunction whose eigenvalue is $\ohm^{2k}\oval$. 

Next, we prove the first claim and start by proving the uniqueness
of the representation $\lambda=\ohm^{2k}\oval$. Assume by contradiction
that $\ohm^{2k_{1}}\oval_{1}=\ohm^{2k_{2}}\oval_{2}$, with different
$k_{1},k_{2}\in\N_{0}$ and $\oval_{1},\oval_{2}\in\ospec{\nbox}$.
Without loss of generality, $k_{1}>k_{2}$ and hence $\oval_{2}=\ohm^{2\left(k_{1}-k_{2}\right)}\oval_{1}$.
Pick an eigenfunction of $\oval_{1}$ and unfold it $k_{1}-k_{2}$
times to get an eigenfunction of the eigenvalue $\oval_{2}$. By \eqref{eq:unfold_fcn_box}
this unfolded eigenfunction is even and by Lemma \ref{lem:eigenfunction_symmetry_box}
we deduce that its eigenvalue, $\oval_{2}$ is also even and arrive
at a contradiction.

It remains to show the existence of $k\in\N_{0},~\oval\in\ospec{\nbox}$
such that $\lambda=\ohm^{2k}\oval$. There exists some $\mvec\in\Q$
such that $\lambda=\lambda_{\mvec}$. If $\lambda\in\ospec{\nbox}$
then the statement holds with $k=0$. Otherwise, by Lemma \ref{lem:fold_unfold_efunc_box},
we get that $\lambda_{F_{\Q}\left(\mvec\right)}=\ohm^{-2}\lambda_{\mvec}$
is an eigenvalue. We keep applying $F_{\Q}$ to $\mvec$ until we
get that $\lambda_{F_{\Q}^{k}\left(\mvec\right)}=\ohm^{-2k}\lambda_{\mvec}$
is an odd eigenvalue. Once we get that, the lemma is proved and it
only remains to show that this process terminates after a finite ($k$)
number of steps. 

In order to see this we may present the $\mvec$ entries as $m_{j}=p_{j}2^{k_{j}}$,
with $k_{j}$ being the largest possible (and formally set $p_{j}=0,~k_{j}=\infty$
if $m_{j}=0$). The subsequent applications of $F_{\Q}$ cyclically
shift the vector and divide the first entry by two (see \eqref{eq:fold_quant_numbers_box}).
Eventually, one of the entries would be odd and the process stops
(unless $\lambda=0$). 

Finally, the equality of multiplicities of $\lambda$ and $\oval$
arises as $\fold^{k}$ is a linear isomorphism (its inverse is $\unfold^{k}$)
from the eigenspace of $\lambda$ to the eigenspace of $\oval$.
\end{proof}
Defining the $k$-frame exactly as in \eqref{eq:k_frame} allows to
prove an analogue of Proposition \ref{prop:k_frame_vanishing}, namely
that for $\oval\in\ospec{\nbox}$, its $k$-unfolded eigenvalue, $\lambda=\ohm^{2k}\oval$
vanishes on the $k$-frame. This in turn shows that Lemmata \ref{lem:degeneracy of subdomain}
and \ref{lem:CS_implies_simplicity} are valid for the high-dimensional
boxes as well (with the $k$-frame partition defined just as in Definition
\ref{def:k_frame_partition}). All we need to use now is Lemma \ref{lem:CS_implies_simplicity},(2),
according to which multiple eigenvalues\footnote{See Appendix \ref{sec:appendix_multiplicity_boxes}, where we discuss
the possible eigenvalue multiplicities of the problem.} cannot be Courant-sharp. Alternatively, we may use Lemma \ref{lem:appendix_nodal_deficiency}
which is a generalization of Lemma \ref{lem:CS_implies_simplicity},(2).

We are left to check the Courant-sharpness of simple eigenvalues.
Since the nodal set of the basis eigenfunctions $\varphi_{\mvec}$,
is determined by
\[
\prod_{j=1}^{n}\cos(\ohm^{j-1}m_{j}x_{j})=0\,;\,\,0\leq x_{j}\leq\pi/\ohm^{j-1},
\]
it is straightforward to deduce that 
\[
\nu(\varphi_{\mvec})=\prod_{j=1}^{n}(m_{j}+1).
\]
This is compared with the spectral position in the next proposition
which rules out the Courant-sharpness of all eigenvalues not appearing
in Theorem \ref{thm:CourantSharp_boxes}.

\begin{figure}
\includegraphics[scale=0.6]{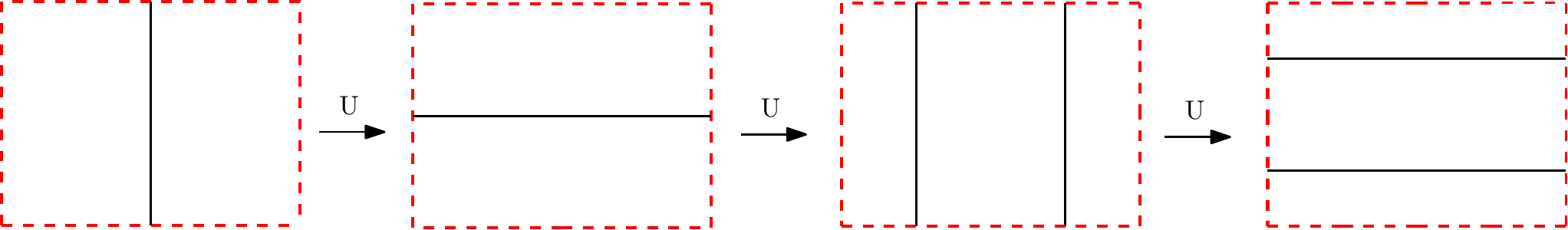}

\protect\caption{The first four k-frames of $\protect\B^{(2)}$\label{fig:four_unfoldings_rectangle}}
\end{figure}

\begin{prop}
\label{prop:non_CS_box} ~
\begin{enumerate}
\item For $n\geq3$ and $N\geq2$, $\lambda_{N}$ is not a Courant-sharp
eigenvalue of $\nbox$.
\item For $n=2$ and $N\notin\left\{ 1,2,4,6\right\} $, $\lambda_{N}$
is not a Courant-sharp eigenvalue for $\B^{\left(2\right)}$.
\end{enumerate}
\end{prop}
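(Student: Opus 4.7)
The plan is to parallel the triangle argument of Section \ref{sec:proof_of_triangle}, building on the reduction already in place. By Lemma \ref{lem:CS_implies_simplicity},(2), valid here via the $k$-frame structure just established, every multiple eigenvalue of $\nbox$ is non-Courant-sharp, so it remains to handle a simple $\lambda_{N}=\lambda_{\mvec}$ and to show $\nu(\varphi_{\mvec})<N(\lambda_{\mvec})$ whenever $\mvec$ lies outside the exceptional list of Theorem \ref{thm:CourantSharp_boxes}.

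For such a simple eigenvalue, following the template of the proof of Proposition \ref{prop:rulling_all_courant_sharp_traingle},\eqref{enu:prop_The-eigenvalues-of U(m,0) not courant sharp}, I would introduce the \emph{quantum box}
\[
\qbox{\mvec} := \{\vec{k}\in\mathbb{N}_{0}^{n}\,:\,0\leq k_{j}\leq m_{j}\ \text{for all}\ j\}.
\]
Two observations then reduce the problem to a lattice-counting question: first, by the nodal-count formula displayed just before the proposition, $|\qbox{\mvec}|=\prod_{j=1}^{n}(m_{j}+1)=\nu(\varphi_{\mvec})$; second, $\qbox{\mvec}\subseteq\Q(\lambda_{\mvec})\cup\{\mvec\}$, because each $(\ohm^{j-1}k_{j})^{2}\leq(\ohm^{j-1}m_{j})^{2}$ with equality only when $\vec{k}=\mvec$. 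Hence ruling out Courant-sharpness of $\lambda_{\mvec}$ amounts to producing a single lattice point $\vec{k}\in\Q(\lambda_{\mvec})\setminus\qbox{\mvec}$, i.e.\ some $\vec{k}$ with $k_{j_{0}}>m_{j_{0}}$ for at least one index $j_{0}$ yet $\lambda_{\vec{k}}<\lambda_{\mvec}$; its existence forces $\nu(\varphi_{\mvec})=|\qbox{\mvec}|<|\Q(\lambda_{\mvec})|+1=N(\lambda_{\mvec})$.

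The construction of the witness $\vec{k}$ splits into two regimes. For $\lambda_{\mvec}$ large, a coarse Weyl-type comparison settles the question: AM--GM applied to the constraint $\sum\ohm^{2(j-1)}m_{j}^{2}=\lambda_{\mvec}$ yields $\prod(m_{j}+1)\leq(1+o(1))(\lambda_{\mvec}/n)^{n/2}/2^{(n-1)/2}$, while $\ndown(\lambda_{\mvec})$ is asymptotic to the Euclidean volume of the quarter-ellipsoid $\{\sum\ohm^{2(j-1)}x_{j}^{2}<\lambda_{\mvec},\,x_{j}\geq 0\}$, equal to $\pi^{n/2}\lambda_{\mvec}^{n/2}/\bigl(2^{n}\Gamma(n/2+1)\,2^{(n-1)/2}\bigr)$; a direct numerical comparison shows the latter leading constant strictly dominates the former for every $n\geq 2$, so only finitely many small $\mvec$ remain. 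For these I would produce $\vec{k}$ by direct trade-off moves—typically replacing $m_{1}$ by $m_{1}-1$ and incrementing a later coordinate $m_{j_{0}}$ by one, or permuting the roles of the first and a later coordinate—and check that the resulting $\lambda_{\vec{k}}$ is strictly less than $\lambda_{\mvec}$. For $n\geq 3$ the mild spread of the weights $\ohm^{2(j-1)}=2^{2(j-1)/n}$ makes such a swap succeed for every $\mvec$ outside the exceptional list, whereas for $n=2$ the tighter weight ratio $1:2$ additionally spares $\mvec=(1,1)$ and $\mvec=(2,1)$, corresponding to $\lambda_{4}$ and $\lambda_{6}$.

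The main obstacle I anticipate is not the asymptotic bound, which is elementary once set up, but the organization of the small-$\mvec$ case analysis: one must exhibit the swap $\vec{k}$ for every non-exceptional simple $\mvec$ simultaneously and verify that each expected exception admits no such swap. A systematic classification by the coordinate $j_{0}$ in which $k_{j_{0}}>m_{j_{0}}$ and by the coordinate from which mass is withdrawn should reduce the small-$\mvec$ step to a short list of explicit inequalities in the entries of $\mvec$ whose solution set matches the exception list of Theorem \ref{thm:CourantSharp_boxes}.
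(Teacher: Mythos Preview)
Your setup---the reduction to simple eigenvalues via the analogue of Lemma \ref{lem:CS_implies_simplicity},\eqref{enu:lem_CS_implies_simplicity_2}, the quantum box $\qbox{\mvec}$, the containment $\qbox{\mvec}\subseteq\Q(\lambda_{\mvec})\cup\{\mvec\}$, and the reduction of non-Courant-sharpness to exhibiting a single witness $\mvec'\in\Q(\lambda_{\mvec})\setminus\qbox{\mvec}$---is exactly the paper's.

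The divergence is in how you propose to find $\mvec'$. Your Weyl/AM--GM comparison is valid for each fixed $n$, but it does not yield a proof uniform in $n$: the implicit threshold above which the asymptotic inequality bites is unspecified and $n$-dependent, so the residual ``small-$\mvec$'' set you would have to treat by hand is still an infinite family once you range over all $n\geq 3$. As you yourself suspect in your final paragraph, the explicit swap is the entire argument; the paper simply carries it out for every $\mvec$ directly, with no asymptotic step at all.

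The paper's organisation of the swap is short and uniform in $n$. If the entries of $\mvec$ are not non-increasing---say $m_{k}<m_{k+1}$---then $\mvec'=(0,\ldots,0,m_{k}+1,0,\ldots,0)$ with the nonzero entry in position $k$ already works. Otherwise set $I=\min\{j:m_{j}=\min_{q}m_{q}\}$ and split on $I=1$ (constant sequence, take $\mvec'=(m_{1}+1,0,\ldots,0)$), $I\geq 3$, and $I=2$; in the latter two cases one takes $\mvec'=(0,\ldots,0,m_{I}+1,0,\ldots,0)$ and compares $\lambda_{\mvec'}$ to an auxiliary $\lambda_{\mvec''}\leq\lambda_{\mvec}$ via a one-line geometric-series inequality in $\ohm$. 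The case $I=2$ needs a brief further subdivision for small $m_{2}$, and the only $\mvec$ that survive are precisely those listed in Theorem \ref{thm:CourantSharp_boxes}. This is the ``systematic classification'' you anticipate, and it is short enough that the asymptotic detour buys nothing.
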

\begin{proof}
By the analogue of Lemma \ref{lem:CS_implies_simplicity},\eqref{enu:lem_CS_implies_simplicity_2}
(see discussion before this proposition), we only need to rule out
the Courant-sharpness of simple eigenvalues. Let $\lambda_{\vec{m}}\in\spec{\nbox}$
be a simple eigenvalue.

\noindent Let $\qbox{\lambda}=\Q\cap\left\{ \left(\left.\left(\tilde{m}_{1},\ldots,\tilde{m}_{n}\right)\right|\tilde{m}_{j}\leq m_{j}\,,\,\forall j\right)\right\} $.
Note that $\qbox{\lambda}$ contains all $\Q$-points contained in
an $n$-dimensional box and $\Q\left(\lambda\right)$ forms all the
$\Q$-points contained within an $n$-dimensional ellipsoid (see Figure
\ref{fig:B_Q_and_Q_lambda} for the $n=2$ case).

\noindent In the sequel we show $\qbox{\lambda}\subsetneq\Q\left(\lambda\right)\cup\left(m_{1},\ldots,m_{n}\right)$
which rules out Courant-sharpness since it gives 
\[
N\left(\lambda\right)=\left|\Q\left(\lambda\right)\cup\left(m_{1},\ldots,m_{n}\right)\right|>\left|\qbox{\lambda_{\vec{m}}}\right|=\nu\left(\varphi_{\vec{m}}\right).
\]
 It is easily seen that 
\[
\qbox{\lambda}\subseteq\Q\left(\lambda\right)\cup\left(m_{1},\ldots,m_{n}\right),
\]
and to show that 
\[
\qbox{\lambda}\subsetneq\Q\left(\lambda\right)\cup\left(m_{1},\ldots,m_{n}\right),
\]
 we point out $\mvec'\in\Q$ such that $\mvec'\in\Q\left(\lambda\right)\backslash\qbox{\lambda}$.
Note that this proof technique resembles the one which is used in
the proof of Proposition \ref{prop:rulling_all_courant_sharp_traingle},\eqref{enu:prop_The-eigenvalues-of U(m,0) not courant sharp}
and the set $\qbox{\lambda}$ plays the same role as the set $\qtrngle{\lambda}$
there. 
\begin{figure}
\noindent \includegraphics[scale=0.8]{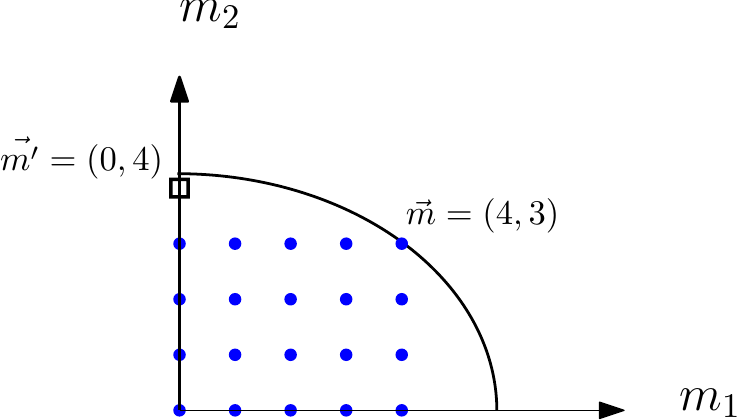}

\protect\caption{Illustration of $\protect\B_{\protect\Q}\left(\lambda_{4,3}\right)$}
\label{fig:B_Q_and_Q_lambda}
\end{figure}

Start by assuming that $\mvec$ is such that there exists $k$ for
which $m_{k}<m_{k+1}$. Choosing 
\begin{equation}
\mvec'=(0,..,0,\underbrace{m_{k}+1,}_{k-th~position}0,..,0),\label{eq:choosing_m_tag}
\end{equation}
satisfies
\[
\lambda_{\vec{m}'}<\lambda_{(0,..,0,\underbrace{m_{k+1},}_{k+1-th~position}0,..,0)}\leq\lambda_{\vec{m}}.
\]
 We may therefore proceed by assuming that the entries of $\mvec$
form a non-increasing ordered set. 

We distinguish the non-increasing sequences by setting 

\[
I:=\min\left\{ j~\at\,m_{j}=\min_{1\leq q\leq n}m_{q}\right\} ,
\]
so that $I$ is the first index starting from which all entries are
equal. 
\begin{enumerate}
\item \emph{\uline{$I=1$}}. In this case $\mvec$ is a constant sequence,
i.e 
\[
\mvec=\left(m_{1},\ldots,m_{1}\right).
\]
Choose 
\[
\mvec'=\left(m_{1}+1,0,\ldots,0\right).
\]
We have 
\[
\lambda_{\mvec}=m_{1}^{2}\sum_{j=1}^{n}\ohm^{2\left(j-1\right)}=m_{1}^{2}\frac{3}{\ohm^{2}-1}\quad\textrm{and }\quad\lambda_{\mvec'}=\left(m_{1}+1\right)^{2}.
\]
An easy calculation shows that $\lambda_{\mvec'}<\lambda_{\mvec}$
holds for all values of $n$ and $m_{1}$ with the only exceptions
being $\mvec=\left(1,1\right)$ and $\vec{m}=\vec{0}$. Indeed, we
see later (Lemma \ref{lem:CS_of_box}) that those are Courant-sharp.\\

\item \uline{$I\geq3$}. With 
\[
\mvec=(m_{1},\ldots,\underbrace{m_{I},}_{I-th~position}\ldots,m_{I}),
\]
choose 
\[
\mvec'=(0,..,0,\underbrace{m_{I}+1,}_{I-th~position}0,..,0)
\]
and consider an auxiliary point 
\[
\mvec'':=(m_{I}+1,\ldots,m_{I}+1,\underbrace{0,}_{I}..,0).
\]
Clearly we have 
\[
\lambda_{\mvec''}\leq\lambda_{\mvec},
\]
and it is left to show
\begin{equation}
\lambda_{\mvec'}<\lambda_{\mvec''}.\label{eq:Auxilary point for k geq 3}
\end{equation}
To get \eqref{eq:Auxilary point for k geq 3} simply note that 
\[
\ohm^{2\left(k-1\right)}\left(m_{I}+1\right)^{2}<\frac{1-\ohm^{2\left(k-1\right)}}{1-\ohm^{2}}\left(m_{I}+1\right)^{2}=\sum_{j=1}^{k-1}\ohm^{2\left(j-1\right)}\left(m_{I}+1\right)^{2},
\]
 for all $n\geq k\geq3$.\\

\item \uline{$I=2$}. With 
\[
\mvec=(m_{1},m_{2},\ldots,m_{2}),
\]
choose 
\[
\mvec'=(0,m_{2}+1,0,..,0)
\]
and consider an auxiliary point 
\[
\mvec'':=(m_{2}+1,m_{2},...,m_{2}).
\]
As
\[
\lambda_{\mvec''}\leq\lambda_{\mvec},
\]
it is left to show
\begin{equation}
\lambda_{\mvec'}<\lambda_{\mvec''}.\label{eq:Auxilary point for k equal 2}
\end{equation}
We get that \eqref{eq:Auxilary point for k equal 2} is equivalent
to 
\begin{equation}
m_{2}>\sqrt{\ohm^{2}-1}\left(\sqrt{\sum_{j=2}^{n}\ohm^{2\left(j-1\right)}}-\sqrt{\ohm^{2}-1}\right)^{-1}.\label{eq:condition_on_m2}
\end{equation}
This inequality holds if either $n\geq3$ and $m_{2}\geq1$ or $n=2$
and $m_{2}\geq3$ (the case $n=2,~m_{2}=3$ is demonstrated in Figure
\ref{fig:B_Q_and_Q_lambda}). \\
The remaining subcases are as follows

\begin{enumerate}
\item For $n\geq3,~m_{2}=0$ we have that $\mvec=\left(m_{1},0,\ldots,0\right)$
and 
\[
\lambda_{\mvec'}=\lambda_{(0,1,0,\ldots,0)}<\lambda_{(m_{1},0,\ldots,0)}=\lambda_{\mvec},
\]
for all $m_{1}\geq2$. \\
Note that $n\geq3,~m_{2}=0,~m_{1}\in\left\{ 0,1\right\} $ correspond
to Courant-sharp eigenvalues (Lemma \ref{lem:CS_of_box}).
\item For $n=2,~m_{2}\in\left\{ 0,1,2\right\} $, we have the following
subcases

\begin{enumerate}
\item If $m_{2}=2$ and $m_{1}>3$ then $\lambda_{\mvec'}=\lambda_{\left(0,3\right)}=18<m_{1}^{2}+8=\lambda_{\left(m_{1},2\right)}.$
\item If $m_{2}=2$ and $m_{1}=3$ then $\lambda_{\mvec'}=\lambda_{\left(4,0\right)}=16<17=\lambda_{\left(3,2\right)}.$
\item If $m_{2}=1$ and $m_{1}\geq3$ then $\lambda_{\mvec'}=\lambda_{\left(0,2\right)}=8<m_{1}^{2}+2=\lambda_{\left(m_{1},1\right)}.$
\item If $m_{2}=0$ and $m_{1}\geq2$ then $\lambda_{\mvec'}=\lambda_{\left(0,1\right)}=2<m_{1}^{2}=\lambda_{\left(m_{1},0\right)}.$\\
~
\end{enumerate}

Note that $\lambda_{\left(1,0\right)}$ and $\lambda_{\left(2,1\right)}$
correspond to Courant-sharp eigenvalues (Lemma \ref{lem:CS_of_box}).

\end{enumerate}
\end{enumerate}
\end{proof}
Finally, Theorem \ref{thm:CourantSharp_boxes} is proven by validating
the Courant-sharpness of the remaining eigenvalues.
\begin{lem}
\label{lem:CS_of_box}~
\begin{enumerate}
\item Let $n\geq3$. $\lambda_{1},~\lambda_{2}$ are Courant-sharp eigenvalues
of $\nbox$.
\item For n=2 (the rectangle case), $\lambda_{1},\lambda_{2},\lambda_{4},\lambda_{6}$
are Courant-sharp eigenvalues of $\mathcal{B}^{\left(2\right)}$.
\end{enumerate}
\end{lem}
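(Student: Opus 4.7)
The plan is direct verification case by case, since no nontrivial obstruction remains. The eigenvalue $\lambda_{1}=0$ is trivially Courant-sharp: the eigenfunction is constant, $\nu=1$, and $N(0)=1$. For $\lambda_{2}$, valid in any dimension $n\geq 2$, I would first confirm that $\lambda_{2}=1$ is simple. Indeed, among nonzero $\vec{m}\in\Q$, the value $\lambda_{\vec{m}}=\sum_{j=1}^{n}(\ohm^{j-1}m_{j})^{2}$ is minimized uniquely by $\vec{m}=(1,0,\ldots,0)$ since $\ohm^{2(j-1)}>1$ for all $j\geq 2$. Hence $N(\lambda_{2})=2$. The corresponding eigenfunction $\varphi_{(1,0,\ldots,0)}(\vec{x})=\cos(x_{1})$ has $\nu=2$ nodal domains (the hyperplane $x_{1}=\pi/2$ cuts $\nbox$ in two), confirming Courant-sharpness. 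Alternatively, one may invoke orthogonality of the second eigenfunction to constants plus Courant's bound.

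For the two remaining cases in dimension $n=2$, I would enumerate the first six eigenvalues explicitly. With $\ohm^{2}=2$, one has $\lambda_{m_{1},m_{2}}=m_{1}^{2}+2m_{2}^{2}$. Running over small $(m_{1},m_{2})$ and verifying that no integer of the form $m_{1}^{2}+2m_{2}^{2}$ equals $5$ or $7$ (a direct check on $m_{2}\in\{0,1\}$), I obtain the increasing listing
\begin{equation*}
0<1<2<3<4<6<8<\cdots,
\end{equation*}
coming from the quantum numbers $(0,0),(1,0),(0,1),(1,1),(2,0),(2,1)$ respectively, each simple. Thus $\lambda_{4}=3=\lambda_{(1,1)}$ with $N(\lambda_{4})=4$, and $\lambda_{6}=6=\lambda_{(2,1)}$ with $N(\lambda_{6})=6$.

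Next I would compute the nodal counts using the product form of $\varphi_{\vec{m}}$. Since the nodal set of $\varphi_{m_{1},m_{2}}(x,y)=\cos(m_{1}x)\cos(\sqrt{2}\,m_{2}y)$ is a rectangular grid of $m_{1}$ vertical and $m_{2}$ horizontal lines, one has $\nu(\varphi_{m_{1},m_{2}})=(m_{1}+1)(m_{2}+1)$. In particular $\nu(\varphi_{(1,1)})=4=N(\lambda_{4})$ and $\nu(\varphi_{(2,1)})=6=N(\lambda_{6})$, establishing Courant-sharpness of $\lambda_{4}$ and $\lambda_{6}$.

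There is no real obstacle to this lemma; the only thing to be careful about is ensuring that the enumeration of the first six eigenvalues of $\B^{(2)}$ is complete and that each of them is simple, which amounts to the elementary check above on representations by the form $m_{1}^{2}+2m_{2}^{2}$ for small values. The computation of $\nu$ is immediate from the product structure of $\varphi_{\vec{m}}$, and the spectral positions $N(\lambda_{i})=i$ for $i\leq 6$ follow from simplicity.
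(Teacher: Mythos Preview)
Your proof is correct and follows essentially the same approach as the paper's: the paper simply invokes Courant's bound together with orthogonality for $\lambda_{1},\lambda_{2}$, and for $n=2$ notes that $\varphi_{(1,1)}$ and $\varphi_{(2,1)}$ correspond to $\lambda_{4}$ and $\lambda_{6}$ with four and six nodal domains respectively. You merely supply the explicit enumeration and simplicity check that the paper leaves implicit.
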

\begin{proof}
By Courant's bound and orthogonality of eigenfunctions $\lambda_{1},\lambda_{2}$
are always Courant-sharp. For the rectangle, $\mathcal{B}^{\left(2\right)}$,
one counts that the eigenfunction $\varphi_{\left(1,1\right)}$, which
corresponds to $\lambda_{4}$ has four nodal domains and the eigenfunction
$\varphi_{\left(2,1\right)}$ which corresponds to $\lambda_{6}$
has six nodal domains.
\end{proof}
\appendix

\section{On multiplicity of eigenvalues of high-dimensional boxes\label{sec:appendix_multiplicity_boxes}}

We start by relating the multiplicity function to the following classical
problem. Denote the sum of squares function by 
\[
r_{2}(z)=\left|\{(m,n)\in\Z^{2}\text{ such that }m^{2}+n^{2}=z\}\right|.
\]
The equality $\mult\left(\lambda\right)=\mult\left(\oval\right)$
in Corollary \ref{cor:eiganvalue_is_odd_times_power_of_two} implies
that $r_{2}(z)=r_{2}(2^{k}z),$ for all $k\in\N$, (see also \cite{sum_of_squares_grosswald},
Chapter 2, Section 4). This fact nicely generalizes in Corollary \ref{cor:eiganvalue_is_odd_times_power_of_gamma}
by the same equality. Indeed, defining the following quadratic form,
\[
q(x_{1},..x_{n})=\sum_{j=1}^{n}\left(\ohm^{j-1}x_{j}\right)^{2},
\]
and denoting 
\[
r_{n}^{q}(z)=\left|\{(m_{1},..,m_{n})\in\Z^{n}\text{ such that }q(m_{1},..,m_{n})=z\}\right|,
\]
we get $r_{n}^{q}(z)=r_{n}^{q}(\ohm^{2k}z)$ for all $k\in\N$. In
particular, this relation seems more interesting for even values of
$n$, as can be interpreted from the following.
\begin{prop}
\label{prop:eigenvalue_multiplicity_box} ~
\begin{enumerate}
\item If $n$ is odd then all eigenvalues $\lambda\in\spec{\nbox}$ are
simple.
\item Let $n$ be even, and let $\lambda\in\spec{\nbox}$ . Then

\begin{enumerate}
\item \label{enu:prop_eigenvalue_multiplicity_box_2a}$\lambda$ is uniquely
written as $\lambda=\sum_{j=0}^{\frac{n}{2}-1}\lambda^{\left(j\right)}\ohm^{2j}$.
\item \label{enu:prop_eigenvalue_multiplicity_box_2b}Each $\lambda^{\left(j\right)}$
is some eigenvalue of the rectangle problem \emph{(}$\lambda^{\left(j\right)}\in\spec{\mathcal{B}^{\left(2\right)}}$\emph{)}.
\item \label{enu:prop_eigenvalue_multiplicity_box_2c}The multiplicity of
$\lambda$ equals to the product over multiplicities of all $\lambda^{\left(j\right)}$'s
as eigenvalues of the rectangle problem.
\end{enumerate}
\end{enumerate}
\end{prop}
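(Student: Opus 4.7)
The plan is to rewrite $\lambda_{\mvec}$ explicitly in the $\mathbb{Q}$-basis $\bas$ of $\mathbb{Q}[\ohm^{2}]$ (or of $\mathbb{Q}[\ohm]$ when $n$ is odd), using the relation $\ohm^{n}=2$ to reduce any powers that lie outside $\bas$, and then to read off from $\mathbb{Q}$-linear independence precisely which tuples $\mvec\in\Q$ produce a given eigenvalue. The fact that $\bas$ in \eqref{eq:generating_set} is a $\mathbb{Q}$-basis follows from the irreducibility of $x^{n}-2$ over $\mathbb{Q}$ (Eisenstein at the prime $2$), which I will invoke without further comment.

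For part (1), when $n$ is odd the map $j\mapsto 2(j-1)\bmod n$ is a bijection of $\{0,1,\dots,n-1\}$, since $\gcd(2,n)=1$. Writing $\ohm^{2(j-1)}=2^{a_{j}}\ohm^{\pi(j)}$ with $a_{j}\in\{0,1\}$ and $\pi$ this bijection, the eigenvalue takes the basis form
\[
\lambda_{\mvec}\;=\;\sum_{j=1}^{n}2^{a_{j}}m_{j}^{2}\,\ohm^{\pi(j)}.
\]
If $\lambda_{\mvec}=\lambda_{\mvec'}$, matching coefficients of each $\ohm^{\pi(j)}$ yields $m_{j}^{2}=(m'_{j})^{2}$ for every $j$, and non-negativity of the entries of $\mvec$ then forces $\mvec=\mvec'$. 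Hence every eigenvalue is simple.

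For part (2), with $n=2m$ I split the sum $\lambda_{\mvec}=\sum_{j=1}^{n}m_{j}^{2}\ohm^{2(j-1)}$ at the threshold $2(j-1)=n$ and apply $\ohm^{n}=2$ to the upper half, obtaining
\[
\lambda_{\mvec}\;=\;\sum_{k=0}^{m-1}\bigl(m_{k+1}^{2}+2\,m_{k+m+1}^{2}\bigr)\ohm^{2k}.
\]
Setting $\lambda^{(k)}:=m_{k+1}^{2}+2m_{k+m+1}^{2}$ gives the decomposition of part (a), whose uniqueness is immediate from the linear independence of $\bas=\{\ohm^{2k}\}_{k=0}^{m-1}$. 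For part (b), the Neumann spectrum of the rectangle $\B^{(2)}$ with edge ratio $\sqrt{2}$ is exactly $\{p^{2}+2q^{2}:(p,q)\in\mathbb{N}_{0}^{2}\}$ (specialise \eqref{eq:eigenvalue_box} to $n=2$, where $\ohm=\sqrt{2}$), so each $\lambda^{(k)}\in\spec{\B^{(2)}}$, witnessed by the pair $(m_{k+1},m_{k+m+1})$. For part (c), the uniqueness from (a) shows that the constraint $\lambda_{\mvec}=\lambda$ decouples into $m$ independent equations $m_{k+1}^{2}+2m_{k+m+1}^{2}=\lambda^{(k)}$ on disjoint pairs of indices, each contributing a number of solutions equal to the multiplicity of $\lambda^{(k)}$ in $\spec{\B^{(2)}}$; the product formula for $\mult(\lambda)$ follows by multiplication.

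The only technical subtlety worth flagging is the index book-keeping in part (2): after the reduction $\ohm^{n+r}=2\ohm^{r}$, one must verify that the upper-half indices $j=m+1,\dots,n$ pair cleanly with the lower-half indices $j=1,\dots,m$ through a common basis element $\ohm^{2k}$. Beyond that step, both parts are clean applications of the $\mathbb{Q}$-linear independence of $\bas$.
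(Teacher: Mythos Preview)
Your proof is correct and follows essentially the same route as the paper's. In both parts you reduce $\lambda_{\mvec}$ to its representation in the basis $\bas$ via $\ohm^{n}=2$ and then read off the constraints from $\mathbb{Q}$-linear independence; the paper does the same, only presenting part~(1) via an explicit split of the sum rather than your bijection $j\mapsto 2(j-1)\bmod n$, which is a cosmetic difference.
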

\begin{proof}
~
\begin{enumerate}
\item Assume $n$ is odd. Assume that there exist $\mvec^{\left(1\right)},\mvec^{\left(2\right)}\in\Q$
such that $\lambda_{\mvec^{\left(1\right)}}=\lambda_{\mvec^{\left(2\right)}}$.
We get
\begin{align*}
\sum_{j=1}^{n}\left(\ohm^{j-1}m_{j}^{\left(1\right)}\right)^{2} & =\sum_{j=1}^{n}\left(\ohm^{j-1}m_{j}^{\left(2\right)}\right)^{2}~~~~\Leftrightarrow\\
0=\sum_{j=1}^{n}\ohm^{2\left(j-1\right)}\left(\left(m_{j}^{\left(1\right)}\right)^{2}-\left(m_{j}^{\left(2\right)}\right)^{2}\right) & =\\
=\sum_{j=1}^{\frac{n+1}{2}}\ohm^{2\left(j-1\right)}\left(\left(m_{j}^{\left(1\right)}\right)^{2}-\left(m_{j}^{\left(2\right)}\right)^{2}\right) & +\sum_{j=1}^{\frac{n-1}{2}}2\ohm^{2j-1}\left(\left(m_{\frac{n+1}{2}+j}^{\left(1\right)}\right)^{2}-\left(m_{\frac{n+1}{2}+j}^{\left(2\right)}\right)^{2}\right),
\end{align*}
using $\ohm^{n}=2$, in the reordering of terms in the last line.
The right-hand side of the above is a linear combination of the basis
$\left\{ \ohm^{j}\right\} _{j=0}^{n-1}$, so that we conclude for
all $j$, $~m_{j}^{\left(1\right)}=m_{j}^{\left(2\right)}$, as required.
\item Assume $n$ is even. Let $\mvec\in\Q$ such that $\lambda_{\mvec}=\lambda$.
We have,
\begin{align}
\lambda=\sum_{j=1}^{n}\left(\ohm^{j-1}m_{j}\right)^{2} & =\sum_{j=1}^{\frac{n}{2}}\ohm^{2\left(j-1\right)}\left(m_{j}^{2}+2m_{j+\frac{n}{2}}^{2}\right),\label{eq:evalue_box_even_dimension}
\end{align}
 and conclude
\begin{equation}
\forall1\leq j\leq\nicefrac{n}{2}\;\;\;\;\;\lambda^{\left(j-1\right)}=m_{j}^{2}+2m_{j+\frac{n}{2}}^{2}.\label{eq:evalue_box_even_dimension_coefficient}
\end{equation}
 Hence we have shown \eqref{enu:prop_eigenvalue_multiplicity_box_2a},
where the uniqueness comes from $\left\{ \ohm^{2j}\right\} _{j=0}^{\nicefrac{n}{2}-1}$
being a basis. From \eqref{eq:evalue_box_even_dimension_coefficient}
it is easily verified that $\lambda^{\left(j-1\right)}\in\spec{\mathcal{B}^{\left(2\right)}}$
and combining with \eqref{eq:evalue_box_even_dimension} we deduce
that the multiplicity of $\lambda\in\spec{\nbox}$ is obtained as
a product over all multiplicities of $\left\{ \lambda^{\left(j-1\right)}\right\} _{j=1}^{\nicefrac{n}{2}}$.
\end{enumerate}
\end{proof}
\begin{rem*}
The second part of the proposition above may be explained as following.
One may express $\mathcal{B}^{(2k)}$ as a direct product of $k$
scaled copies of $\mathcal{B}^{(2)}$. Denoting the edge lengths of
$\mathcal{B}^{(2k)}$ by $l_{1},\ldots l_{2k}$, the $j^{\textrm{th }}$
copy of $\mathcal{B}^{(2k)}$ has edge lengths $l_{j},l_{k+j}$. Each
eigenfunction on $\mathcal{B}^{(2k)}$ can be expressed as a product
of eigenfunctions on all different $k$ scaled copies of $\mathcal{B}^{(2)}$.
Hence each eigenvalue of $\mathcal{B}^{(2k)}$ is a sum over eigenvalues
of all the $\mathcal{B}^{(2)}$ copies.
\end{rem*}

\section{nodal deficiency\label{sec:appendix_nodal_deficiency}}

The importance of nodal deficiency of eigenfunctions has been recognized
in recent studies \cite{BanBerRazSmi_cmp12,Ber_apde13,BerKucSmi_gafa12,BerRazSmi_jpa12,BerWey_ptrsa13}.
It is the nodal deficiency that has been exactly expressed by variations
over partitions and eigenvalues. These recent works concern manifolds,
as well as quantum and discrete graphs. We bring here some interesting
bounds on the nodal deficiencies of the spectral problems studied
in this paper.

We define the nodal deficiency of an eigenfunction $\varphi$ of an
eigenvalue $\lambda$ by 
\[
\nodaldef{\varphi}=N\left(\lambda\right)-\nu\left(\varphi\right),
\]
and the nodal deficiency of an eigenvalue by 
\begin{equation}
\nodaldef{\lambda}:=\underset{\varphi\in E\left(\mbox{\ensuremath{\lambda}}\right)}{\min}\nodaldef{\varphi}.\label{eq:nodal deficiect}
\end{equation}
Where $E\left(\lambda\right)$ is the eigenspace associated to $\lambda.$
In the following we use the analysis of Section \ref{sec:proof_of_triangle}
to derive lower bounds of the nodal deficiency of eigenvalues. The
following lemmata hold for all the domains treated in the paper. We
use the notation $\Omega$ to indicate both $\trngle$ and $\nbox$
and denote
\[
\ohmdmn\left(\dom\right):=\begin{cases}
\sqrt{2} & \dom=\trngle\\
\ohm & \dom=\B^{(n)}
\end{cases}.
\]
The next lemma provides a lower bound on the nodal deficiency of multiple
eigenvalues.
\begin{lem}
\label{lem:appendix_nodal_deficiency}Let $\oval\in\ospec{\dom}$
and $k\in\N_{0}$. The nodal deficiency of the eigenvalue $\ohmdmn\left(\dom\right)^{2k}\cdot\oval$
obeys 
\[
\nodaldef{\ohmdmn\left(\dom\right)^{2k}\cdot\oval}\geq\left(\mult\left(\oval\right)-1\right)\cdot\left(M\left(k,\dom\right)-1\right),
\]

\noindent where $M(k,\dom)$ is the number of the subdomains of the
$k$-frame partition of $\dom$ and $d\left(\oval\right)$ is the
multiplicity of $\oval$.\end{lem}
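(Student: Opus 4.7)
The plan is to quantify the slack in the inequality chain used in the proof of Lemma \ref{lem:CS_implies_simplicity}, rather than only extracting the simplicity conclusion from it. Set $\lambda := \ohmdmn(\dom)^{2k} \oval$ and let $\varphi$ be an arbitrary eigenfunction of $\lambda$. By (the appropriate analogue of) Proposition \ref{prop:k_frame_vanishing}, $\varphi$ vanishes on the $k$-frame $S^{(k)}$, so the nodal count decomposes as
\[
\nu(\varphi) \;=\; \sum_{i=1}^{M(k,\dom)} \nu\bigl(\varphi|_{\dom_i^{(k)}}\bigr),
\]
and each restriction is an eigenfunction of the mixed Dirichlet--Neumann problem \eqref{eq:subdomains eigenvalue problem} on $\dom_i^{(k)}$. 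Courant's theorem applied on each piece gives $\nu(\varphi|_{\dom_i^{(k)}}) \leq N_i^{(k)}(\lambda)$.

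Next I would invoke three ingredients, each carried over verbatim from Section \ref{sec:proof_of_triangle} (and holding equally in the box setting): (i) since Lemma \ref{lem:degeneracy of subdomain} ensures $\lambda \in \spec{\dom_i^{(k)}}$, one has $N_i^{(k)}(\lambda) = \nup_i^{(k)}(\lambda) - \mult_i^{(k)}(\lambda) + 1$; (ii) the variational principle gives $\sum_i \nup_i^{(k)}(\lambda) \leq \nup(\lambda)$ (as in \eqref{eq:variational principle}); (iii) by Lemma \ref{lem:degeneracy of subdomain} combined with Corollary \ref{cor:eiganvalue_is_odd_times_power_of_two} (or its box analogue, Corollary \ref{cor:eiganvalue_is_odd_times_power_of_gamma}), $\mult_i^{(k)}(\lambda) \geq \mult(\lambda) = \mult(\oval)$. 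Summing (i) over $i$ and plugging in (ii) and (iii) produces
\[
\nu(\varphi) \;\leq\; \sum_{i=1}^{M(k,\dom)} N_i^{(k)}(\lambda) \;\leq\; \nup(\lambda) \;-\; M(k,\dom)\,\mult(\oval) \;+\; M(k,\dom).
\]

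Finally, replacing $\nup(\lambda) = N(\lambda) + \mult(\lambda) - 1 = N(\lambda) + \mult(\oval) - 1$ and regrouping yields
\[
\nu(\varphi) \;\leq\; N(\lambda) \;-\; \bigl(\mult(\oval)-1\bigr)\bigl(M(k,\dom)-1\bigr),
\]
and taking the minimum over all eigenfunctions of $\lambda$ delivers the claimed bound on $\nodaldef{\lambda}$.

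I do not anticipate a serious technical obstacle; the argument is essentially a bookkeeping refinement of the proof of Lemma \ref{lem:CS_implies_simplicity}, where the contribution $\mult(\oval)-1$ per subdomain was previously discarded. The only point worth verifying carefully is that Lemma \ref{lem:degeneracy of subdomain}---originally stated for the triangle and proved via unique continuation \cite{Aronszajn_jmpa57}---applies verbatim in the box case, which is immediate since its proof uses only the $k$-frame vanishing (Proposition \ref{prop:k_frame_vanishing} and its box analogue) together with linear independence on an open subset.
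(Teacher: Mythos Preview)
Your proof is correct and follows essentially the same route as the paper's own argument: decompose $\nu(\varphi)$ via the $k$-frame, apply Courant on each piece, use the variational principle \eqref{eq:variational principle} to bound $\sum_i \nup_i^{(k)}(\lambda)$, and invoke Lemma \ref{lem:degeneracy of subdomain} together with $\mult(\lambda)=\mult(\oval)$. The only cosmetic difference is that the paper carries the terms as $N_i^{(k)}-\nup_i^{(k)}\leq N-\nup$ while you phrase the same inequality as $\mult_i^{(k)}(\lambda)\geq \mult(\oval)$; these are algebraically identical.
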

\begin{proof}
For the sake of convenience, we abbreviate notations by writing $\ohmdmn$
instead of $\ohmdmn\left(\dom\right)$ and $M\left(k\right)$ instead
of $M(k,\dom)$. Note that the following arguments below are similar
to those we have used in the proof of Lemma \ref{lem:CS_implies_simplicity}.
We have
\[
\nu\left(\varphi\right)=\sum_{i=1}^{M(k)}\nu\left(\varphi\at_{\dom_{i}^{(k)}}\right)\underbrace{\leq}_{\textrm{Courant nodal theorem}}\sum_{i=1}^{M\left(k\right)}N_{i}^{\left(k\right)}\left(\ohmdmn^{2k}\cdot\oval\right)
\]
\[
=\sum_{i=1}^{M\left(k\right)}\nup_{i}^{\left(k\right)}\left(\ohmdmn^{2k}\cdot\oval\right)+\sum_{i=1}^{M\left(k\right)}\left(N_{i}^{\left(k\right)}\left(\ohmdmn^{2k}\cdot\oval\right)-\nup_{i}^{\left(k\right)}\left(\ohmdmn^{2k}\cdot\oval\right)\right)
\]
\[
\underbrace{\leq}_{\textrm{variational principle \eqref{eq:variational principle} }}\nup\left(\ohmdmn^{2k}\cdot\oval\right)+\sum_{i=1}^{M\left(k\right)}\left(N_{i}^{\left(k\right)}\left(\ohmdmn^{2k}\cdot\oval\right)-\nup_{i}^{\left(k\right)}\left(\ohmdmn^{2k}\cdot\oval\right)\right)
\]
\[
\underbrace{\leq}_{\textrm{Lemma \ref{lem:degeneracy of subdomain}}}\nup\left(\ohmdmn^{2k}\cdot\oval\right)+\sum_{i=1}^{M\left(k\right)}\left(N\left(\ohmdmn^{2k}\cdot\oval\right)-\nup\left(\ohmdmn^{2k}\cdot\oval\right)\right)
\]
\[
=N\left(\ohmdmn^{2k}\cdot\oval\right)+\left(M\left(k\right)-1\right)\cdot\left(N\left(\ohmdmn^{2k}\cdot\oval\right)-\nup\left(\ohmdmn^{2k}\cdot\oval\right)\right).
\]
Thus

\[
N\left(\ohmdmn^{2k}\cdot\oval\right)-\nu\left(\varphi\right)\geq\left(M\left(k\right)-1\right)\cdot\left(\nup\left(\ohmdmn^{2k}\cdot\oval\right)-N\left(\ohmdmn^{2k}\cdot\oval\right)\right),
\]
and 
\[
\nodaldef{\varphi}\geq\left(\mult\left(\ohmdmn^{2k}\cdot\oval\right)-1\right)\cdot\left(M\left(k\right)-1\right).
\]
Since the right-hand side does not depend on $\varphi$ we get 
\[
\nodaldef{\ohmdmn^{2k}\cdot\oval}\geq\left(\mult\left(\ohmdmn^{2k}\cdot\oval\right)-1\right)\cdot\left(M\left(k\right)-1\right).
\]
Finally, use 
\[
\mult\left(\oval\right)=\mult\left(\ohmdmn^{2k}\cdot\oval\right),
\]
 (see Corollary \ref{cor:eiganvalue_is_odd_times_power_of_two} for
the triangle or Corollary \ref{cor:eiganvalue_is_odd_times_power_of_gamma}
for the boxes) to finish the proof.
\end{proof}
We may obtain even more explicit bounds on the nodal deficiency by
computing $M\left(k,\dom\right)$, as explained in the following.
In the case of $\B^{(n)}$, we can get an explicit expression for
$M\left(k,\B^{(n)}\right)$, noticing the relations 
\begin{align*}
\forall0\leq k\leq n-1;~~~M\left(k,\B^{(n)}\right) & =2\\
\forall k\geq0;~~~M\left(k+n,\B^{(n)}\right)= & 2M\left(k,\B^{(n)}\right)-1.
\end{align*}
Those relations may be obtained by noticing that all $k$-frames are
formed by hyperplanes, all parallel to each other. The number of those
hyperplanes determines $M\left(k,\B^{(n)}\right)$ and this number
may be deduced by working out the definition of $k$-frames (Definition
\ref{def:k_frame_partition} with \eqref{eq:hyperplabe_of_box},\eqref{eq:reflection_box},\eqref{eq:unfold_coords_box},
and see as an example Figure \ref{fig:four_unfoldings_rectangle}).
From those relations we obtain 
\[
M\left(k,\B^{(n)}\right)=2^{\left\lfloor \frac{k}{n}\right\rfloor }+1,\,\,\forall k\geq0.
\]
 In the case of the triangle, we may also obtain the explicit expression
for $M\left(k,\trngle\right)$. Yet, as the calculation is somewhat
cumbersome, we chose to provide the following estimate. A square subdomain
appears on the 4-frame partition (see Figure \ref{fig:four_unfoldings-1}).
Getting to the next $k$-frames, each unfolding at least doubles the
number of this particular subdomain (up to scaling) and hence $M\left(k,\trngle\right)>c2^{k}$,
for some constant $c$. In effect, the exact calculation gives the
same order of magnitude, i.e. $M\left(k,\trngle\right)=\Theta(2^{k}).$
Applying Lemma \ref{lem:appendix_nodal_deficiency} for odd eigenvalues,
where $k=0$ and $M\left(0,\Omega\right)=2$, we get $\nodaldef{\oval}\geq\mult\left(\oval\right)-1$.
We may actually improve this bound by relating the nodal deficiency
with the count of boundary lattice points, as follows.
\begin{lem}
Let $\oval\in\ospec{\dom}$ , then
\[
\nodaldef{\oval}\geq\left|\bdry\left(\oval\right)\cap\E\right|-1.
\]
\end{lem}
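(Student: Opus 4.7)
The plan is to observe that this bound is essentially already derived inside the proof of Proposition~\ref{prop:rulling_all_courant_sharp_traingle},\eqref{enu:prop_zeroth_unfoldings_not_CS}: that proof culminated in \eqref{eq:nodal deficiecy of odd eigenvalues}, namely $\nu(\varphi) \leq N(\lambda_{m,n}) + 1 - |\bdry(\lambda_{m,n}) \cap \E|$ for any eigenfunction $\varphi$ of an odd eigenvalue $\lambda_{m,n}$, and was only used there to rule out Courant-sharpness. Rearranging gives precisely $\nodaldef{\varphi} \geq |\bdry(\oval) \cap \E| - 1$, after which taking a minimum over the eigenspace yields the claim.

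Concretely, let $\varphi$ be any eigenfunction of $\oval$. First, I would invoke Lemma~\ref{lem:eigenfunction_symmetry} (for $\dom = \trngle$) or Lemma~\ref{lem:eigenfunction_symmetry_box} (for $\dom = \nbox$) to conclude that $\varphi$ is antisymmetric across $\line$, so that $\nu(\varphi) = 2\,\nu(\varphi|_{\text{half-domain}})$. The restriction then solves the mixed Dirichlet-Neumann problem \eqref{eq:dnn_problem-1} at eigenvalue $\oval$, and Courant's nodal theorem gives $\nu(\varphi|_{\text{half-domain}}) \leq \tilde{N}(\oval)$. Combined with the reflection-principle bijection between odd eigenfunctions on $\dom$ and eigenfunctions of the mixed problem (cf.\ \eqref{eq:number of odd_qn equal to the dnn counting}), this produces $\tilde{N}(\oval) = |\O(\oval)| + 1$.

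Applying the lattice bijection $B$ of \eqref{eq:boundary_map} to rewrite $|\O(\oval)| = |\E(\oval)| - |\bdry(\oval) \cap \E|$, together with $\underline{N}(\oval) = |\O(\oval)| + |\E(\oval)|$ and $N(\oval) = \underline{N}(\oval) + 1$, collapses the chain of inequalities to
\[
\nu(\varphi) \;\leq\; N(\oval) + 1 - |\bdry(\oval) \cap \E|,
\]
so $\nodaldef{\varphi} \geq |\bdry(\oval) \cap \E| - 1$. Taking the minimum over $\varphi \in E(\oval)$ delivers the stated bound on $\nodaldef{\oval}$.

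The only genuine work is checking that the triangle-specific ingredients transfer verbatim to the box $\nbox$. This amounts to two observations: first, the reflection principle across the hyperplane $\line$ extends odd eigenfunctions of the box just as for the triangle, so the counting identity $\underline{\tilde{N}}(\oval) = |\O(\oval)|$ still holds; second, the analogue of $B$ is simply $(m_1, \ldots, m_n) \mapsto (m_1 - 1, m_2, \ldots, m_n)$, whose well-definedness follows from the fact that parity of eigenvalues of $\nbox$ is controlled solely by $m_1$ via Lemma~\ref{lem:eigenfunction_symmetry_box} and \eqref{eq:parity_of_lambda}. Both are immediate, so the adaptation to high-dimensional boxes presents more of a verification than an obstacle.
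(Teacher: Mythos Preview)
Your proposal is correct and follows essentially the same approach as the paper: the paper's proof simply cites \eqref{eq:nodal deficiecy of odd eigenvalues} for the triangle, remarks that the same bound holds for the boxes upon generalizing \eqref{eq:right_boundary_points}, and then minimizes over the eigenspace. You have unpacked this in more detail (including spelling out the box analogue of the bijection $B$), but the underlying argument is identical.
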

\begin{proof}
Let $\varphi$ be an eigenfunction that corresponds to $\oval\in\ospec{\dom}.$
For the triangle we have by Equation \eqref{eq:nodal deficiecy of odd eigenvalues}
that 
\[
\nodaldef{\varphi}\geq\left|\bdry\left(\oval\right)\cap\E\right|-1,
\]
and the same bound for the boxes, defining $\bdry\left(\oval\right)$
by generalizing \eqref{eq:right_boundary_points}. The lemma now follows
since the right-hand side does not depend on $\varphi$.
\end{proof}
Note that in the course of the proof of Proposition \ref{prop:rulling_all_courant_sharp_traingle},\eqref{enu:prop_zeroth_unfoldings_not_CS}
(see \eqref{eq:boundary_has_at_least_two_points}) it is shown that
$\left|\bdry\left(\oval\right)\cap\E\right|>1$, so that the bound
of the lemma above is not trivial. In fact, by a lattice analysis
one may further get that the size of this set is of order $\sqrt{\oval}$.

\section{The Dirichlet problem\label{sec:appendix_dirichlet_problem}}

We shortly discuss below how the methods of this work may be applied
to examine the Dirichlet eigenvalue problem on the domains treated
herein. The Courant-sharp eigenvalues of the Dirichlet right-angled
isosceles are already determined in \cite{BerHel_lmp16} using the
analysis of the corresponding Dirichlet eigenvalue problem, \cite{AroBanFajGnu_jpa12},
done by two of the authors of the current paper together with Aronovitch
and Gnutzmann. 

Let us lay the framework for examining the Dirichlet 2-rep-tiles.
The quantum number set of the Dirichlet triangle is 
\begin{equation}
\Q:=\left\{ \left.\left(m,n\right)\in\mathbb{N}\times\mathbb{N}\right|m>n\right\} ,\label{eq:quantum_numbers triangle dirichlet}
\end{equation}
and the corresponding eigenvalues are 
\[
\lambda_{m,n}=\left\Vert \left(m,n\right)\right\Vert ^{2}~~;\ \left(m,n\right)\in\Q.
\]
For the boxes the quantum number set is 
\begin{equation}
\Q:=\left\{ \mvec\in\mathbb{N}^{n}\right\} ,\label{eq:quantum_numbers_box-1}
\end{equation}
and the corresponding eigenvalues are
\begin{equation}
\lambda_{\mvec}=\sum_{j=1}^{n}\left(\ohm^{j-1}m_{j}\right)^{2}~~;\ \mvec\in\Q.\label{eq:eigenvalue_box-1}
\end{equation}
Note that these sets of quantum numbers are included in those defined
for the Neumann problems. We exploit this to define $\O$, $\E$ and
in turn $\ospec{\Omega}$ and $\espec{\Omega}$ for $\Omega$ being
either $\trngle$ or $\B^{(n)}$ exactly in the same manner as we
did for the Neumann problem (see Definitions \ref{def:odd_even_spectrum}
and \ref{def:odd_even_spectrum_box}). We obtain for the Dirichlet
problem the following lemma, which may be prove similarly to its Neumann
analogues, Lemmata \ref{lem:eigenfunction_symmetry} and \ref{lem:eigenfunction_symmetry_box}.
\begin{lem}
\label{lem:eigenfunction_symmetry_dirichlet}Let $\lambda\in\ospec{\Omega}$
\emph{(}$\lambda\in\espec{\Omega}$\emph{)}, then its corresponding
eigenfunctions are even \emph{(}odd\emph{)} w.r.t. $\line$\emph{
}if and only if $\lambda$ is odd \emph{(}even\emph{).}
\end{lem}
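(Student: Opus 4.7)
The proof plan mirrors very closely that of Lemmata \ref{lem:eigenfunction_symmetry} and \ref{lem:eigenfunction_symmetry_box}, the only substantive change being that $\sin$ replaces $\cos$ in the eigenfunction basis, which flips one overall sign and thereby exchanges the role of ``even'' and ``odd'' eigenfunctions relative to the Neumann case. The first step is to record an explicit orthogonal basis of Dirichlet eigenfunctions indexed by the corresponding $\mathcal{Q}$: on the triangle one takes the antisymmetric product $\varphi_{m,n}(x,y)=\sin(mx)\sin(ny)-\sin(nx)\sin(my)$ for $(m,n)\in\mathcal{Q}$, which vanishes on $y=0$, on $x=\pi$, and on the hypotenuse $y=x$ by construction; on the box one takes $\varphi_{\vec m}(\vec x)=\prod_{j=1}^{n}\sin(\gamma^{j-1}m_j x_j)$ for $\vec m\in\mathcal{Q}$. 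In both cases the eigenvalue formula is unchanged from the Neumann setting.

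Next I would carry out the direct computation of $\varphi\circ R$ exactly as in the proofs of Lemmata \ref{lem:eigenfunction_symmetry} and \ref{lem:eigenfunction_symmetry_box}, but now using the identity $\sin(k\pi-\alpha)=(-1)^{k+1}\sin(\alpha)$ in place of $\cos(k\pi-\alpha)=(-1)^{k}\cos(\alpha)$. For the triangle this yields
\[
\varphi_{m,n}(\pi-y,\pi-x)=(-1)^{m+n+1}\,\varphi_{m,n}(x,y),
\]
so $\varphi_{m,n}$ is even with respect to $\line$ precisely when $m+n$ is odd, i.e.\ when $(m,n)\in\mathcal{O}$, i.e.\ when $\lambda_{m,n}\in\ospec{\trngle}$. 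For the box the same reflection $R(\vec x)=(\pi-x_1,x_2,\ldots,x_n)$ only touches the first coordinate, giving
\[
\varphi_{\vec m}(R(\vec x))=(-1)^{m_1+1}\varphi_{\vec m}(\vec x),
\]
so $\varphi_{\vec m}$ is even with respect to $\line$ precisely when $m_1$ is odd. Since in both cases the parity of $\lambda$ is already known to be encoded by $m+n\pmod 2$ (resp.\ by $m_1\pmod 2$, via \eqref{eq:parity_of_lambda}), the stated equivalence follows for each basis element.

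Finally, I would extend the claim from basis elements to an arbitrary eigenfunction of a (possibly degenerate) eigenvalue by the same argument used at the end of the proofs of Lemmata \ref{lem:eigenfunction_symmetry} and \ref{lem:eigenfunction_symmetry_box}: for $\lambda\in\ospec{\Omega}$ (resp.\ $\lambda\in\espec{\Omega}$) the set $\{\varphi_{\vec m}\,:\,\lambda_{\vec m}=\lambda\}$ is a basis of the eigenspace consisting entirely of eigenfunctions of the same parity, so every eigenfunction of $\lambda$ inherits that parity by linearity.

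I do not expect a serious obstacle here; the only point requiring any care is bookkeeping of the sign $(-1)^{m+n+1}$ (respectively $(-1)^{m_1+1}$), whose ``$+1$'' in the exponent is exactly what swaps the parities of eigenfunction and eigenvalue relative to the Neumann case. Everything else, including the passage from the single-mode statement to the eigenspace statement, is verbatim the Neumann argument.
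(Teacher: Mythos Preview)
Your proposal is correct and is exactly the approach the paper indicates: the paper does not spell out a proof but simply says the lemma ``may be proved similarly to its Neumann analogues, Lemmata \ref{lem:eigenfunction_symmetry} and \ref{lem:eigenfunction_symmetry_box}.'' You have carried out precisely that adaptation, and your key observation---that replacing $\cos$ by $\sin$ introduces the extra factor $(-1)$ in the reflection computation (via $\sin(k\pi-\alpha)=(-1)^{k+1}\sin\alpha$), thereby swapping the eigenfunction parity relative to the eigenvalue parity---is the one substantive point, handled correctly.
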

One should pay careful attention to the difference in phrasing of
this lemma comparing to its Neumann analogues. Here an eigenvalue
belongs to the \emph{even} spectrum if and only if its eigenfunctions
are \emph{odd}. In turn, the folding transformation may be applied
only on even eigenvalues (alternatively, on odd eigenfunctions).

We use Lemma \ref{lem:eigenfunction_symmetry_dirichlet} to express
the nodal deficiency in terms of boundary lattice points, adopting
the notation $\ohmdmn\left(\Omega\right)$ of the previous appendix.
\begin{lem}
Let $\lambda\in\espec{\Omega}$ then we have
\[
\nodaldef{\lambda}=2\cdot\nodaldef{\ohmdmn\left(\dom\right)^{-2}\cdot\lambda}+\left|\bdry\left(\lambda\right)\cap\O\right|-1.
\]
\end{lem}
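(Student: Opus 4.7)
The plan is to mirror the Neumann derivation that led to \eqref{eq:nodal deficiecy of odd eigenvalues}, with the parities of eigenvalues and eigenfunctions interchanged (as dictated by Lemma \ref{lem:eigenfunction_symmetry_dirichlet}) and with the lattice bijection \eqref{eq:boundary_map} reversed. The argument is the same for $\Omega=\trngle$ and for $\Omega=\B^{(n)}$.

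First I would observe that any eigenfunction $\varphi$ of $\lambda\in\espec{\Omega}$ vanishes on $\line$, so reflection gives $\nu(\varphi)=2\nu(\varphi|_{\hftrngle})$. The similarity between $\hftrngle$ and a rescaled copy of $\Omega$ implies that $\varphi|_{\hftrngle}$ and the folded eigenfunction $\fold\varphi$ (a Dirichlet eigenfunction on $\Omega$ with eigenvalue $\ohmdmn\left(\Omega\right)^{-2}\lambda$) have equal nodal counts. Unfolding any Dirichlet eigenfunction of $\ohmdmn\left(\Omega\right)^{-2}\lambda$ anti-symmetrically across $\line$ reverses this construction and lands in the eigenspace of $\lambda$, so $\fold$ is a linear isomorphism between the two eigenspaces. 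Consequently $\max_{\varphi}\nu(\varphi)=2\max_{\psi}\nu(\psi)$, where the latter maximum is over eigenfunctions of $\ohmdmn\left(\Omega\right)^{-2}\lambda$, and hence
\[
\nodaldef{\lambda}=N(\lambda)-2N\left(\ohmdmn\left(\Omega\right)^{-2}\lambda\right)+2\nodaldef{\ohmdmn\left(\Omega\right)^{-2}\lambda}.
\]

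Next I would evaluate the spectral counting term by lattice counting. The quantum-number folding $F_{\Q}$ restricted to $\E$ is a bijection onto $\Q$ that scales eigenvalues by $\ohmdmn\left(\Omega\right)^{-2}$, so $|\E(\lambda)|=\ndown\left(\ohmdmn\left(\Omega\right)^{-2}\lambda\right)$. Since $\lambda,\,\ohmdmn\left(\Omega\right)^{-2}\lambda\in\spec{\Omega}$ and $\ndown(\lambda)=|\O(\lambda)|+|\E(\lambda)|$, this gives
\[
N(\lambda)-2N\left(\ohmdmn\left(\Omega\right)^{-2}\lambda\right)=|\O(\lambda)|-|\E(\lambda)|-1.
\]
To close the identity I would construct the Dirichlet analogue of the map $B$ from \eqref{eq:boundary_map}. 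The key point is that Dirichlet quantum numbers lie in $\N$ rather than $\N_{0}$, and any $(p,q)\in\E$ has $p-q$ even and strictly positive, hence $p\geq q+2$. The shift $(p,q)\mapsto(p-1,q)$ is therefore a well-defined bijection $\E(\lambda)\to\O(\lambda)\setminus(\bdry\left(\lambda\right)\cap\O)$, yielding $|\O(\lambda)|-|\E(\lambda)|=|\bdry\left(\lambda\right)\cap\O|$ and the lemma. The boxes case is analogous, with the shift applied only to the first coordinate.

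I expect the main obstacle to be establishing the Dirichlet folding-unfolding bijection on eigenspaces carefully enough to obtain a \emph{true equality} of maximal nodal counts (rather than the mere inequality used for the Neumann case in Appendix \ref{sec:appendix_nodal_deficiency}); in particular, one must verify that the anti-symmetric unfolding of a Dirichlet eigenfunction on $\Omega$ satisfies the Dirichlet condition on $\partial\Omega$ and that folding-unfolding form mutual inverses on the relevant Dirichlet eigenspaces. A secondary bookkeeping subtlety is the boxes version of the lattice bijection, where one must confirm that the shift of the first coordinate preserves membership in $\Q(\lambda)$ outside $\bdry\left(\lambda\right)\cap\O$.
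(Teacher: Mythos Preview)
Your proposal is correct and follows essentially the same route as the paper's proof. The paper organizes the argument slightly differently---it first establishes the lattice identity $N(\lambda)=2N(\ohmdmn(\Omega)^{-2}\lambda)+|\bdry(\lambda)\cap\O|-1$, then derives the eigenfunction-wise equality $\nodaldef{\varphi}=2\,\nodaldef{\varphi|_{\frac{1}{2}\Omega}}+|\bdry(\lambda)\cap\O|-1$ and minimizes over $\varphi$ on both sides---whereas you pass through $\max_\varphi\nu(\varphi)=2\max_\psi\nu(\psi)$; but the ingredients (oddness of $\varphi$ giving $\nu(\varphi)=2\nu(\varphi|_{\frac{1}{2}\Omega})$, the $F_\Q$ bijection $\E(\lambda)\leftrightarrow\Q(\ohmdmn(\Omega)^{-2}\lambda)$, and the shift bijection $\E(\lambda)\to\O(\lambda)\setminus(\bdry(\lambda)\cap\O)$) are identical.
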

\begin{proof}
Start by noting that $\left.U_{\Q}\right|_{\Q\left(\ohmdmn\left(\dom\right)^{-2}\cdot\lambda\right)}$
is an injection and its image is $\E\left(\lambda\right),$ thus 
\begin{equation}
\left|\E\left(\lambda\right)\right|=\ndown\left(\ohmdmn\left(\dom\right)^{-2}\cdot\lambda\right).\label{eq:even and lower counting}
\end{equation}
Also note that 
\[
B:\,\,\mathcal{\E}\left(\lambda\right)\rightarrow\mathcal{O}\left(\lambda\right)\backslash\left(\underrightarrow{\partial}\Q\left(\lambda\right)\cap\mathcal{O}\right)
\]
\[
B(m_{1},...,m_{n})=(m_{1}-1,...,m_{n}),
\]
with $n=2$ in the triangle case, is a bijection (cf. \eqref{eq:boundary_map}),
which gives
\begin{equation}
\left|\O\left(\lambda\right)\right|=\left|\E\left(\lambda\right)\right|+\left|\bdry\left(\lambda\right)\cap\O\right|.\label{eq:odd is bigger than even}
\end{equation}
We get 
\[
N\left(\lambda\right)=\left|\E\left(\lambda\right)\right|+\left|\O\left(\lambda\right)\right|+1\underbrace{=}_{\eqref{eq:even and lower counting},\,\eqref{eq:odd is bigger than even}}2\cdot N\left(\ohmdmn\left(\dom\right)^{-2}\cdot\lambda\right)+\left|\bdry\left(\lambda\right)\cap\O\right|-1.
\]

Choosing some eigenfunction $\varphi$ of $\lambda$ we have

\begin{equation}
N\left(\lambda\right)=\nu\left(\varphi\right)+\nodaldef{\varphi}\underbrace{=}_{\varphi\,\text{is odd}}2\cdot\nu\left(\varphi\at_{\frac{1}{2}\Omega}\right)+\nodaldef{\varphi}.\label{eq:counting is nodals plus deficiency}
\end{equation}
Hence
\[
2\cdot N\left(\ohmdmn\left(\dom\right)^{-2}\cdot\lambda\right)+\left|\bdry\left(\lambda\right)\cap\O\right|-1=2\nu\left(\varphi\at_{\frac{1}{2}\Omega}\right)+\nodaldef{\varphi},
\]
which in turn leads to
\begin{equation}
\nodaldef{\varphi}=2\cdot\nodaldef{\varphi\at_{\frac{1}{2}\Omega}}+\left|\bdry\left(\lambda\right)\cap\O\right|-1.\label{eq:dirichlet eigenfunction deficiency identity}
\end{equation}
Since the nodal deficiency of an eigenvalue is the minimal deficiency
over all corresponding eigenfunctions we get 
\[
\nodaldef{\varphi}\geq2\cdot\nodaldef{\ohmdmn\left(\dom\right)^{-2}\cdot\lambda}+\left|\bdry\left(\lambda\right)\cap\O\right|-1.
\]
The right-hand side is independent of $\varphi$ and therefore
\[
\nodaldef{\lambda}\geq2\cdot\nodaldef{\ohmdmn\left(\dom\right)^{-2}\cdot\lambda}+\left|\bdry\left(\lambda\right)\cap\O\right|-1.
\]
We note that the opposite inequality follows by the same method, which
finishes the proof.
\end{proof}
We end by noting that as $\left|\bdry\left(\oval\right)\cap\E\right|>1$
(see \eqref{eq:boundary_has_at_least_two_points} and discussion at
the end of the previous appendix) the result of the lemma both supplies
a non-trivial bound on the deficiency and also rules out all even
eigenvalues from being Courant-sharp (the argument is actually the
same as the one used in the proof of Proposition \ref{prop:rulling_all_courant_sharp_traingle},\eqref{enu:prop_zeroth_unfoldings_not_CS}
to rule out the Courant-sharpness of odd eigenvalues in the Neumann
case).

\section*{Acknowledgments}

We wish to thank Thomas Hoffmann-Ostenhof for helpful discussions
and Danny Neftin for his algebraic remarks. Sebastian Egger is warmly
acknowledged for the the careful reading of the manuscript. R.B. and
M.B. were supported by ISF (Grant No. 494/14). R.B. was supported
by Marie Curie Actions (Grant No. PCIG13-GA-2013-618468) and the Taub
Foundation (Taub Fellow). D.F. thanks the mathematics faculty of the
Technion for their hospitality. 

\bibliographystyle{plain}
\bibliography{CourantSharpness}

\def\cprime{$'$}
\begin{thebibliography}{10}

\bibitem{Agranovsky_arxv15}
Mark Agranovsky.
\newblock Ruled nodal surfaces of {Laplace} eigenfunctions and injectivity sets
  for the spherical mean {Radon} transform in $\mathbb{R}^3$.
\newblock {\em arXiv:1504.01250v2 [math.AP]}.

\bibitem{nodal_thms_a_la_courant}
A.~Ancona, B.~Helffer, and T.~Hoffmann-Ostenhof.
\newblock Nodal domain theorems {$\grave{\text{a}}$ la Courant}.
\newblock {\em Documenta Mathematica}, 9:283--299, 2004.

\bibitem{AroBanFajGnu_jpa12}
A.~Aronovitch, R.~Band, D.~Fajman, and S.~Gnutzmann.
\newblock Nodal domains of a non-separable problem -- the right-angled
  isosceles triangle.
\newblock {\em J. Phys. A: Math. Theor.}, 45(8):085209, 2012.

\bibitem{Aronszajn_jmpa57}
N.~Aronszajn.
\newblock A unique continuation theorem for solutions of elliptic partial
  differential equations or inequalities of second order.
\newblock {\em J. Math. Pures Appl. (9)}, 36:235--249, 1957.

\bibitem{BanBerRazSmi_cmp12}
R~Band, G~Berkolaiko, H~Raz, and U~Smilansky.
\newblock The number of nodal domains on quantum graphs as a stability index of
  graph partitions.
\newblock {\em Communications in Mathematical Physics}, 311:815--838, 2012.
\newblock 10.1007/s00220-011-1384-9.

\bibitem{BerHel_arXiv15}
Pierre B{\'e}rard and Bernard Helffer.
\newblock The weak {Pleijel} theorem with geometric control.
\newblock {\em arXiv:1512.07089 [math.SP]}.

\bibitem{BerHel_ang15}
Pierre B{\'e}rard and Bernard Helffer.
\newblock Dirichlet eigenfunctions of the square membrane: {C}ourant's
  property, and {A}. {S}tern's and {A}. {P}leijel's analyses.
\newblock In {\em Analysis and geometry}, volume 127 of {\em Springer Proc.
  Math. Stat.}, pages 69--114. Springer, Cham, 2015.

\bibitem{BerHel_lmp16}
Pierre B{\'e}rard and Bernard Helffer.
\newblock Courant-sharp eigenvalues for the equilateral torus, and for the
  equilateral triangle.
\newblock {\em Letters in Mathematical Physics}, pages 1--61, 2016.

\bibitem{VanGit_arXiv16}
Michiel van~den Berg and Katie Gittins.
\newblock On the number of {Courant-sharp Dirichlet} eigenvalues.
\newblock {\em arXiv:1602.08376 [math.SP]}.

\bibitem{Ber_apde13}
G.~Berkolaiko.
\newblock Nodal count of graph eigenfunctions via magnetic perturbation.
\newblock {\em Anal. PDE}, 6:1213--1233, 2013.
\newblock preprint {\tt arXiv:1110.5373}.

\bibitem{BerKucSmi_gafa12}
G.~Berkolaiko, P.~Kuchment, and U.~Smilansky.
\newblock Critical partitions and nodal deficiency of billiard eigenfunctions.
\newblock {\em Geom. Funct. Anal.}, 22:1517--1540, 2012.
\newblock also arXiv:1107.3489 [math-ph].

\bibitem{BerRazSmi_jpa12}
G.~Berkolaiko, H.~Raz, and U.~Smilansky.
\newblock Stability of nodal structures in graph eigenfunctions and its
  relation to the nodal domain count.
\newblock {\em J. Phys. A}, 45(16):165203, 2012.
\newblock IOP Select.

\bibitem{BerWey_ptrsa13}
G.~Berkolaiko and T.~Weyand.
\newblock Stability of eigenvalues of quantum graphs with respect to magnetic
  perturbation and the nodal count of the eigenfunctions.
\newblock {\em Phi. Trans. R. Soc. A}, 372:2012.0522, 2013.
\newblock preprint {\tt arXiv:1212.4475}.

\bibitem{BonHel_arXv15}
Virginie Bonnaillie-No{\"e}l and B~Helffer.
\newblock {\em {Shape optimization and spectral theory }}, chapter {Nodal and
  spectral minimal partitions -- The state of the art in 2015 --}.
\newblock {De Gruyter Open}, 2016.
\newblock Book in preperation.

\bibitem{BouRud_inv11}
Jean Bourgain and Ze{\'e}v Rudnick.
\newblock On the nodal sets of toral eigenfunctions.
\newblock {\em Inventiones mathematicae}, 185(1):199--237, 2011.

\bibitem{Cou_ngwgmp23}
R.~Courant.
\newblock Ein allgemeiner {S}atz zur {T}heorie der {E}igenfuktionen
  selbstadjungierter {D}ifferentialausdr\"ucke.
\newblock {\em Nachr. Ges. Wiss. G\"ottingen Math Phys}, pages 81--84, 1923.

\bibitem{CourantHilbert_volume1}
R.~Courant and D.~Hilbert.
\newblock {\em Methods of mathematical physics. {V}ol. {I}}.
\newblock Interscience Publishers, Inc., New York, N.Y., 1953.

\bibitem{Faber_23}
G.~Faber.
\newblock Beweis, dass unter allen homogenen {M}embranen von gleicher
  {F}l\"{a}che und gleicher {S}pannung die kreisf\"{o}rmige den tiefsten
  {G}rundton gibt, {S}itzungberichte der mathematisch-physikalischen {K}lasse
  der {B}ayerischen {A}kademie der {W}issenschaften zu {M}\"{u}nchen.
\newblock pages 169--172, 1923.

\bibitem{Golomb_tmg64}
Solomon~W. Golomb.
\newblock Replicating figures in the plane.
\newblock {\em The Mathematical Gazette}, 48(366):403--412, 1964.

\bibitem{sum_of_squares_grosswald}
Emil Grosswald.
\newblock {\em Representations of Integers as Sums of Squares}.
\newblock Springer-Verlag New York, 1 edition, 1985.

\bibitem{HelHofTer_aihp09}
B.~Helffer, T.~Hoffmann-Ostenhof, and S.~Terracini.
\newblock Nodal domains and spectral minimal partitions.
\newblock {\em Ann. Inst. H. Poincar\'e Anal. Non Lin\'eaire}, 26(1):101--138,
  2009.

\bibitem{HelKiw_arXv15}
B.~Helffer and R.~Kiwan.
\newblock Dirichlet eigenfunctions on the cube, sharpening the {Courant} nodal
  inequality.
\newblock {\em arXiv:1506.05733 [math.SP]}.

\bibitem{HelHof_chapter13}
Bernard Helffer and Thomas Hoffmann-Ostenhof.
\newblock Spectral minimal partitions for a thin strip on a cylinder or a thin
  annulus like domain with {N}eumann condition.
\newblock In {\em Operator methods in mathematical physics}, volume 227 of {\em
  Oper. Theory Adv. Appl.}, pages 107--115. Birkh\"auser/Springer Basel AG,
  Basel, 2013.

\bibitem{HelHof_jst14}
Bernard Helffer and Thomas Hoffmann-Ostenhof.
\newblock Minimal partitions for anisotropic tori.
\newblock {\em J. Spectr. Theory}, 4(2):221--233, 2014.

\bibitem{HelHofTer_ims10}
Bernard Helffer, Thomas Hoffmann-Ostenhof, and Susanna Terracini.
\newblock On spectral minimal partitions: the case of the sphere.
\newblock In {\em Around the research of {V}ladimir {M}az'ya. {III}}, volume~13
  of {\em Int. Math. Ser. (N. Y.)}, pages 153--178. Springer, New York, 2010.

\bibitem{HelSun_ams16}
Bernard Helffer and Mikael~Persson Sundqvist.
\newblock On nodal domains in euclidean balls.
\newblock {\em arXiv:1506.04033v2. To appear in Proc. AMS 2016}.

\bibitem{HelSun_mmj15}
Bernard Helffer and Mikael~Persson Sundqvist.
\newblock Nodal domains in the square---the {N}eumann case.
\newblock {\em Mosc. Math. J.}, 15(3):455--495, 605, 2015.

\bibitem{herbrich_arX14}
Peter Herbrich.
\newblock On inaudible properties of broken drums - isospectrality with mixed
  {Dirichlet-Neumann} boundary conditions.
\newblock {\em arXiv:1111.6789 [math.DG]}, 2014.

\bibitem{Krahn_ma25}
E.~Krahn.
\newblock \"{U}ber eine von {R}ayleigh formulierte {M}inimaleigenschaft des
  {K}reises.
\newblock {\em Math. Ann.}, 94(1):97--100, 1925.

\bibitem{LauSiu_book16}
R.~Laugesen and B.~A. Siudeja.
\newblock {\em Shape optimization and spectral theory}, chapter {Triangles and
  Other Special Domains}.
\newblock De Gruyter Open, 2016.
\newblock Book in preperation.

\bibitem{Lena_ams16}
Corentin L{\'e}na.
\newblock Courant-sharp eigenvalues of the three-dimensional square torus.
\newblock {\em arXiv:1511.04199 [math.SP]. To appear in Proc. AMS 2016}.

\bibitem{Lena_crm15}
Corentin L{\'e}na.
\newblock Courant-sharp eigenvalues of a two-dimensional torus.
\newblock {\em C. R. Math. Acad. Sci. Paris}, 353(6):535--539, 2015.

\bibitem{Leydold_Phd89}
J.~Leydold.
\newblock {\em Knotenlinien und Knotengebiete von Eigenfunktionen}.
\newblock Diplomarbeit, 1989.

\bibitem{Leydold_top96}
Josef Leydold.
\newblock On the number of nodal domains of spherical harmonics.
\newblock {\em Topology}, 35(2):301--321, 1996.

\bibitem{Martin_transformation_geometry_82}
George~Edward Martin.
\newblock {\em Transformation geometry}.
\newblock Undergraduate Texts in Mathematics. Springer-Verlag, New York-Berlin,
  1982.
\newblock An introduction to symmetry.

\bibitem{NgaSirVeeWan_ged00}
Sze-Man Ngai, V{\'i}ctor~F. Sirvent, J.~J.~P. Veerman, and Yang Wang.
\newblock On 2-reptiles in the plane.
\newblock {\em Geometriae Dedicata}, 82(1):325--344, 2000.

\bibitem{Pleijel_cpam56}
A.~{Pleijel}.
\newblock {Remarks on Courant's nodal line theorem}.
\newblock {\em Communications on Pure and Applied Mathematics}, 9(3):543--550,
  1956.

\bibitem{Polterovich_ams09}
I.~Polterovich.
\newblock Pleijel's nodal domain theorem for free membranes.
\newblock volume 137 of {\em Proc. Amer. Math. Soc.}, 2009.

\end{thebibliography}

\end{document}